\declaretheoremstyle[headfont=\normalfont]{normalhead}
\newtheorem{lemma}{Lemma}[section]
\newtheorem{theorem}[lemma]{Theorem}
\newtheorem{proposition}[lemma]{Proposition}
\newtheorem{corollary}[lemma]{Corollary}
\newtheorem{definition}[lemma]{Definition}
\newtheorem{remark}[lemma]{Remark}
\newcounter{mt}
\newcommand{\R}{\mathbb{R}}
\newcommand{\C}{\mathbb{C}}
\DeclareMathOperator{\Val}{Val}
\DeclareMathOperator{\VConv}{VConv}
\DeclareMathOperator{\Conv}{Conv}
\DeclareMathOperator{\vol}{vol}
\DeclareMathOperator{\supp}{supp}
\DeclareMathOperator{\res}{res}
\DeclareMathOperator{\GL}{GL}
\DeclareMathOperator{\GW}{\mathrm{GW}}
\DeclareMathOperator{\Gr}{\mathrm{Gr}}
\DeclareMathOperator{\KL}{\mathrm{Kl}}
\DeclareMathOperator{\Sym}{\mathrm{Sym}}
\DeclareMathOperator{\SO}{\mathrm{SO}}
\DeclareMathOperator{\MAVal}{\mathrm{MAVal}}
\DeclareMathOperator{\MA}{\mathrm{MA}}
\renewcommand{\P}{\mathcal{P}}
\author{Jonas Knoerr}
\title{Monge-Amp\`ere operators and valuations}
\date{}
\newcommand{\Addresses}{{
		\bigskip
		\footnotesize
		
		Jonas Knoerr, \textsc{Institute of Discrete Mathematics and Geometry, TU Wien, Wiedner Hauptstrasse 8-10, 1040 Wien, Austria}\par\nopagebreak
		\textit{E-mail address}: \texttt{jonas.knoerr@tuwien.ac.at}
		
		\medskip
	}}
\def\blfootnote{\xdef\@thefnmark{}\@footnotetext}
\begin{document}
\maketitle
\begin{abstract}
	Two classes of measure-valued valuations on convex functions related to Monge-Amp\`ere operators are investigated and classified. It is shown that the space of all valuations with values in the space of complex Radon measures on $\R^n$ that are locally determined, continuous, dually epi-translation invariant as well as translation equivariant, is finite dimensional. Integral representations of these valuations and a description in terms of mixed Monge-Amp\`ere operators are established, as well as a characterization of $\SO(n)$-equivariant valuations in terms of Hessian measures.
\end{abstract}
\blfootnote{2020 \emph{Mathematics Subject Classification}. 52B45, 26B25, 53C65, 52A39.\\
	\emph{Key words and phrases}. Convex function, valuation on functions, Monge-Amp\`ere operator.\\}

\section{Introduction}
	Monge-Amp\`ere-type equations arise in many problems from analysis and geometry, including Minkowski's problem, optimal transport, affine geometry, pluripotential theory, as well as physics, and play a prominent role in the Calabi-Yau Theorem~\cite{YauRiccicurvaturecompact1978}. In its classical form, the Monge-Amp\`ere equation is given by 
	\begin{align}
		\label{eq:MAEquation}
		\det(D^2f(x))=F(x,f(x),\nabla f(x)) \quad\text{for} ~x\in U,
	\end{align}
	where $U\subset\R^n$ is an open convex subset of $\R^n$, $D^2f$ denotes the Hessian of a convex function $f:U\rightarrow\R$, and $F:U\times\R\times\R^n\rightarrow\R^+$ is given.  We refer to the articles by Figalli \cite{FigalliMongeAmpereequation2019} and Trudinger and Wang \cite{TrudingerWangMongeAmpereequation2008} for a background on the Monge-Amp\`ere equation and its geometric applications.\\
	
	The notion of a generalized solution of the Monge-Amp\`ere equation goes back to Alexandrov. Here, the left hand side of \eqref{eq:MAEquation} is interpreted as a Borel measure $\MA(f)$ on $U$, called the \emph{Monge-Amp\`ere measure} of $f$, which extends by continuity to all finite-valued convex functions on $U$. More precisely, let $\Conv(U,\R)$ denote the space of all finite-valued convex functions on a convex open set $U\subset\R^n$ and $\mathcal{M}(U):= (C_c(U))'$ the space of complex Radon measures on $U$, considered as the continuous dual of the space $C_c(U)$ of complex-valued continuous functions with compact support. If we equip $\Conv(U,\R)$ with the topology induced by locally uniform convergence and $\mathcal{M}(U)$ with the weak*-topology, then the real Monge-Amp\`ere operator
	\begin{align*}
		\MA:\Conv(U,\R)&\rightarrow\mathcal{M}(U)\\
		f&\mapsto \MA(f)	
	\end{align*}
	is a continuous map and satisfies $d\MA(f)[x]=\det(D^2f(x))d\vol_n(x)$ for $f\in\Conv(U,\R)\cap C^2(U)$. Similar results hold for other operators of Monge-Amp\`ere-type, for example the complex Monge-Amp\`ere operator. \\
	One of the fundamental properties of these operators is that they define \emph{valuations} on $\Conv(U,\R)$. Here, a functional $\Psi$ defined on a family $X$ of (extended) real-valued functions with values in an Abelian semi-group is called a valuation if
	\begin{align*}
		\Psi(f)+\Psi(h)=\Psi(f\vee h)+\Psi (f\wedge h)
	\end{align*}
	for all $f,h\in X$ such that the pointwise maximum $f\vee h$ and minimum $f\wedge h$ belong to $X$. 
	This can be considered as a generalization of the notion of valuations on a family of sets, where the pointwise maximum and minimum are replaced by union and intersection. 
	In recent years, valuations on different families of sets, and especially convex bodies, have been the focus of intense investigation, which has led to a vast number of classification results and applications in integral geometry \cite{AleskerDescriptiontranslationinvariant2001,AleskerEtAlHarmonicanalysistranslation2011,AleskerFaifmanConvexvaluationsinvariant2014,BernigFuHermitianintegralgeometry2011,BernigEtAlIntegralgeometrycomplex2014,BoeroeczkyLudwigMinkowskivaluationslattice2019,HaberlParapatitsMomentsvaluations2016,HaberlParapatitsCentroaffinetensor2017,KlainshortproofHadwigers1995,LudwigEllipsoidsmatrixvalued2003,LudwigReitznerclassification$SLn$invariant2010,LudwigSilversteinTensorvaluationslattice2017,WannererIntegralgeometryunitary2014,Wannerermoduleunitarilyinvariant2014}. Similarly, there exists a large body of research concerning valuations on functions \cite{BaryshnikovEtAlHadwigersTheoremdefinable2013,ColesantiEtAlhomogeneousdecompositiontheorem2020,ColesantiEtAlclassinvariantvaluations2020,ColesantiEtAlContinuousvaluationsspace2021,HofstaetterKnoerrEquivariantEndomorphismsConvex2022,KnoerrSmoothvaluationsconvex2020,KnoerrUlivellivaluationsconvexbodies2023,LudwigFisherinformationmatrix2011,LudwigValuationsSobolevspaces2012,LudwigCovariancematricesvaluations2013,MussnigVolumepolarvolume2019,TradaceteVillanuevaValuationsBanachlattices2020,TsangMinkowskivaluations$Lp$2012}, which includes classifications of many important geometric operators. \\
	Monge-Amp\`ere-type operators have found a number of applications in modern valuation theory, for example in the construction of continuous valuations on convex bodies and functions  \cite{AleskerValuationsconvexsets2005,AleskerValuationsconvexfunctions2019,Kazarnovskiizerosexponentialsums1981,KazarnovskiiNewtonpolyhedraroots1984}. Similarly, the \emph{Hessian measures} play a prominent role in the characterization of a class of rotation invariant valuations on $\Conv(\R^n,\R)$ by Colesanti, Ludwig and Mussnig \cite{ColesantiEtAlHadwigertheoremconvex2020,ColesantiEtAlHadwigertheoremconvex2021,ColesantiEtAlHadwigertheoremconvex2022a,ColesantiEtAlHadwigertheoremconvex2022}, which mirrors a classical result by Hadwiger \cite{HadwigerVorlesungenuberInhalt2013}. These measures were considered previously by Trudinger and Wang \cite{TrudingerWangHessianmeasuresI1997,TrudingerWangHessianmeasuresII1999} and they are intimately related to support measures of convex bodies and singular sets of semi-convex functions \cite{ColesantiHugSteinertypeformulae2000,ColesantiHugHessianmeasuresconvex2005}. \\

	The main results of this article provide a complete description of two classes of measure-valued valuations on $\Conv(\R^n,\R)$ that share many properties with the real Monge-Amp\`ere operator. Let $U\subset\R^n$ be an open convex set. We call a map $\Psi:\Conv(U,\R)\rightarrow\mathcal{M}(U)$
\begin{itemize}
	\item \emph{locally determined} if the following holds: If $f,h\in\Conv(U,\R)$ satisfy $f\equiv h$ on an open subset $V\subset U$, then 
	\begin{align*}
		\Psi(f)[B\cap V]=\Psi(h)[B\cap V]\quad\text{for all relatively compact Borel sets } B\subset U;
	\end{align*}
	\item \emph{translation equivariant} if $U=\R^n$ and for all $f\in \Conv(\R^n,\R)$ and all $x\in\R^n$,
		\begin{align*}
			\Psi(f(\cdot+x))[B]=\Psi(f)[B+x]\quad\text{for all bounded Borel sets } B\subset\R^n;
		\end{align*}
	\item \emph{dually epi-translation invariant} if
	\begin{align*}
		\Psi(f+l)=\Psi(f)\quad\text{for all }f\in\Conv(U,\R),~ l:\R^n\rightarrow\R \text{ affine};
	\end{align*}
	\item \emph{dually simple} if $\Psi(\pi_E^*f|_U)=0$ for all $f\in\Conv(E,\R)$ and all proper subspaces $E\subset\R^n$, where $\pi_E:\R^n\rightarrow E$ denotes the orthogonal projection.
\end{itemize}
	The terminology for the last two notions stems from a geometric interpretation of these properties in terms of valuations on epi-graphs. We refer to \cite{ColesantiEtAlhomogeneousdecompositiontheorem2020,ColesantiEtAlHadwigertheoremconvex2022} for details and only note that the real Monge-Amp\`ere operator $\MA$ has all of these properties. Our first result shows that it is essentially the only continuous valuation $\Psi:\Conv(\R^n,\R)\rightarrow\mathcal{M}(\R^n)$ to do so.
	\begin{theorem}
		\label{maintheorem:characRealMA}
		Let $\Psi: \Conv(U,\R)\rightarrow\mathcal{M}(U)$ be a continuous, dually epi-translation invariant valuation. If $\Psi$ is dually simple and locally determined, then there exists a unique continuous function $\psi\in C(U)$ such that for every $f\in\Conv(U,\R)$,
		\begin{align*}
			\Psi(f)[B]=\int_B \psi(x)d\MA(f)[x]\quad\text{for all relatively compact Borel sets}~B\subset U.
		\end{align*}
	\end{theorem}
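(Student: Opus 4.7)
My plan proceeds through three main steps: homogeneous decomposition of $\Psi$, a polynomial density representation on smooth functions, and an algebraic endgame exploiting dual simplicity.

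By the homogeneous decomposition theorem for continuous, dually epi-translation invariant valuations on $\Conv(U,\R)$ \cite{ColesantiEtAlhomogeneousdecompositiontheorem2020}, write $\Psi=\sum_{k=0}^n\Psi_k$, where each $\Psi_k$ is positively $k$-homogeneous in $f$ and inherits continuity, dual epi-translation invariance, local determination, and dual simplicity. Polarization then yields continuous symmetric $k$-linear maps $\bar\Psi_k\colon\Conv(U,\R)^k\to\mathcal{M}(U)$ with $\bar\Psi_k(f,\dots,f)=\Psi_k(f)$, extending each $\Psi_k$ to a mixed valuation.

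Now restrict to $f\in C^2(U)\cap\Conv(U,\R)$. Local determination combined with dual epi-translation invariance forces $\Psi_k(f)$, restricted to a sufficiently small neighborhood of $x$, to depend only on the $2$-jet of $f$ at $x$ modulo affine functions, i.e.\ only on $D^2f(x)$. Together with the multilinearity coming from $\bar\Psi_k$, this should yield a density representation $d\Psi_k(f)(x)=h_k(x,D^2f(x))\,d\vol_n(x)$, where $A\mapsto h_k(x,A)$ is a polynomial of degree $k$ in the entries of the symmetric matrix $A$, obtained by evaluating a symmetric $k$-linear form at the diagonal $(A,\dots,A)$. Applying dual simplicity to functions of the form $f=\pi_E^*g$ with $E\subsetneq\R^n$ then gives $h_k(x,A)=0$ for every singular positive semidefinite symmetric $A$. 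A standard algebraic argument, using that $\det A$ is irreducible as a polynomial on the space of symmetric matrices and that singular positive semidefinite matrices form a full-dimensional real subset of the variety $\{\det A=0\}$, implies that $\det A$ divides $h_k(x,\cdot)$. Since $h_k(x,\cdot)$ has degree $k\leq n$ while $\det A$ has degree $n$, we conclude $h_k\equiv 0$ for $k<n$, and for $k=n$ that $h_n(x,A)=\psi(x)\det A$ for a uniquely determined complex scalar $\psi(x)$. Thus $\Psi=\Psi_n$ and $d\Psi(f)(x)=\psi(x)\,d\MA(f)(x)$ on $C^2$ convex functions.

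Continuity of $\psi$ follows by evaluating $\Psi$ on a continuously parametrized family of smooth strictly convex test functions (for instance translates of $y\mapsto\tfrac12|y|^2$) and exploiting continuity of $\Psi$ into $\mathcal{M}(U)$. The identity extends from $C^2$ to all of $\Conv(U,\R)$ by density of $C^2$ convex functions in $\Conv(U,\R)$ under locally uniform convergence, together with the continuity of both $\Psi$ and $f\mapsto\int_\cdot\psi\,d\MA(f)$ (the latter relying on the continuity of $\MA$). The principal obstacle, in my view, is the rigorous derivation of the polynomial density representation $d\Psi_k(f)(x)=h_k(x,D^2f(x))\,d\vol_n(x)$ on $C^2$ functions from the axiomatic properties; while local determination, dual epi-translation invariance, continuity, and $k$-homogeneity make it plausible, making it precise likely requires a careful Goodey--Weil-type distributional analysis of the polarization $\bar\Psi_k$ as developed in earlier work on valuations on convex functions. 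Once this representation is in place, the divisibility argument via $\det A$ and the continuity of $\psi$ are direct.
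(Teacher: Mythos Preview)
Your proposal has a genuine gap at precisely the point you identify: the density representation $d\Psi_k(f)(x)=h_k(x,D^2f(x))\,d\vol_n(x)$. Your heuristic justification is moreover incorrect as stated. Local determination says that $\Psi_k(f)$ restricted to an open set $V$ depends only on $f|_V$; it does \emph{not} say that the measure near $x$ depends only on the germ of $f$ at $x$, let alone only on the $2$-jet $D^2f(x)$. Two $C^2$ convex functions with the same Hessian at $x$ will generally differ on every neighborhood of $x$, so local determination and dual epi-translation invariance give no mechanism for collapsing the dependence to $D^2f(x)$. Establishing such a representation from the axioms would essentially amount to the classification in Theorem~\ref{maintherorem:RepresentationDifferentialForms}, part~4, which in the paper is proved \emph{after} the present theorem via Fourier--Laplace analysis of Goodey--Weil distributions and representation theory, and which in addition uses translation equivariance---an assumption not available here.

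The paper's argument is entirely different and avoids smooth functions. First, Theorem~\ref{theorem:simpleValuations} shows that a dually simple valuation is automatically $n$-homogeneous, so $\Psi=\Psi_n$; this replaces your divisibility-by-$\det$ argument outright. For the remaining $n$-homogeneous, locally determined case (Theorem~\ref{theorem:characteristicFunction}), the paper evaluates $\Psi$ on translated support functions $h_K(\cdot-x)$ via the injective embedding $S:\VConv_n(\R^n,\mathcal{M}(U))\to C(\R^n,\Val_n(\R^n,\mathcal{M}(U)))$ of Theorem~\ref{theorem_embedding_Cont}. The key geometric input is Lemma~\ref{lemma:supportFunctionFace}: for a polytope $P$ and any $y_0\neq 0$, $h_P$ agrees with $h_{P'}$ near $y_0$ for some polytope $P'$ of dimension at most $n-1$. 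Local determination together with dual simplicity then force $\Psi(h_P(\cdot-x))$ to be supported at $\{x\}$, hence equal to $c(P,x)\delta_x$; Hadwiger's characterization of $\Val_n$ (via Corollary~\ref{cor:characteristicFunctionMaxDeg}) gives $c(K,x)=\psi(x)\vol_n(K)$, and since $\MA(h_K(\cdot-x))=\vol_n(K)\delta_x$, comparison through the injective map $S$ identifies $\Psi$ with $f\mapsto\int_{(\cdot)}\psi\,d\MA(f)$.
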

	If we assume that $U=\R^n$ and that the valuation $\Psi$ in Theorem~\ref{maintheorem:characRealMA} is in addition translation equivariant, this result implies that $\Psi$ is a scalar multiple of the real Monge-Amp\`ere operator. Let us remark that a similar result holds for real-valued, dually epi-translation invariant, continuous valuations, as shown by Colesanti, Ludwig and Mussnig \cite{ColesantiEtAlhomogeneousdecompositiontheorem2020,ColesantiEtAlHadwigertheoremconvex2022}.\\
	
	For our second main result, we drop the assumption that our valuations are dually simple and instead consider translation equivariant valuations. 
	\begin{definition}
		Let $\MAVal(\R^n)$ denote the space of all continuous, dually epi-translation invariant valuations on $\Conv(\R^n,\R)$ with values in $\mathcal{M}(\R^n)$ that are locally determined and translation equivariant.
	\end{definition}
	We will informally consider the space $\MAVal(\R^n)$ as the space of translation equivariant Monge-Amp\`ere operators on $\Conv(\R^n,\R)$, a notion that will be justified by the results below.\\

	As a consequence of a very general result about dually epi-translation invariant valuations established in \cite{Knoerrsupportduallyepi2021}, this space admits a homogeneous decomposition, similar to a classical result by McMullen \cite{McMullenValuationsEulertype1977}: Let us call $\Psi\in\MAVal(\R^n)$ \emph{$k$-homogeneous} if $\Psi(tf)=t^k\Psi(f)$ for all $f\in\Conv(\R^n,\R)$, $t\ge 0$. If we denote the corresponding subspace by $\MAVal_k(\R^n)$, then
	\begin{align*}
		\MAVal(\R^n)=\bigoplus_{k=0}^n\MAVal_k(\R^n)
	\end{align*}
	compare Section \ref{section:homogeneousDecomposition}.

	Before we state the classification result for this space of valuations, let us discuss some examples. Let us start with valuations obtained from the real Monge-Amp\`ere operator by polarization. Define the \emph{mixed Monge-Amp\`ere measure} by
	\begin{align*}
		\MA(f_1,\dots,f_n):=\frac{1}{n!}\frac{\partial^{n}}{\partial\lambda_1\dots\partial\lambda_{n}}\Big|_0\MA\left(\sum_{i=1}^{n}\lambda_if_i\right)\quad\text{for}~f_1,\dots,f_n\in\Conv(\R^n,\R).
	\end{align*}
	It is easy to see that $(\lambda_1,\dots,\lambda_k)\mapsto \MA\left(\sum_{i=1}^{n}\lambda_if_i\right)$ is a polynomial in $\lambda_1,\dots,\lambda_n\ge 0$, so this is well defined. Taking $f$ $k$-times in this expression, we obtain \emph{mixed Monge-Amp\`ere operators} of degree $k$, 
	\begin{align*}
		f\mapsto \MA(f[k],f_1,\dots,f_{n-k}),
	\end{align*}
	which define elements of $\MAVal_k(\R^n)$ if $f_1,\dots,f_{n-k}$ are quadratic polynomials. In this case, we call $f\mapsto \MA(f[k],f_1,\dots,f_{n-k})$ a  mixed Monge-Amp\`ere operator of \emph{quadratic-type}. \\

	A second way to construct functionals in this class is the following: If $f\in C^2(\R^n)$, then the graph of its differential $\mathrm{graph}(df)$ defines an oriented $n$-dimensional $C^1$-submanifold of the cotangent bundle $T^*\R^n=\R^n\times(\R^n)^*$. In particular, we can consider this graph as an $n$-dimensional current by integrating suitable differential forms. As shown by Fu \cite{FuMongeAmpereFunctions1989}, this construction extends naturally to a much larger class of functions, called \emph{Monge-Amp\`ere functions}, which includes the space $\Conv(\R^n,\R)$. To any such function $f$ one can associate an integral current $D(f)$ on $T^*\R^n$ that is uniquely characterized by a list of natural properties satisfied by the graph of the differential of a $C^2$-function. The current $D(f)$ is called the \emph{differential cycle} of $f$ and can be used to construct valuations on convex bodies \cite{AleskerFuTheoryvaluationsmanifolds2008} and convex functions \cite{KnoerrSmoothvaluationsconvex2020,KnoerrUnitarilyinvariantvaluations2021,KnoerrSingularvaluationsHadwiger2022}.\\
	
	Let $\Lambda^{n-k,k}:=\Lambda^{n-k}\R^n\otimes  \Lambda^{k}(\R^n)^*$ denote the space of constant differential forms on $T^*\R^n= \R^n\times (\R^n)^*$ of bidegree $(n-k,k)$ with complex coefficients. If $\pi:T^*\R^n\rightarrow\R^n$ denotes the projection onto the first factor, then any $\tau\in \Lambda^{n-k,k}$ defines an element $\Psi_\tau\in \MAVal_k(\R^n)$ by setting
	\begin{align*}
		\Psi_\tau(f)[B]:=D(f)[1_{\pi^{-1}(B)}\tau]\quad\text{for all bounded Borel sets }B\subset\R^n
	\end{align*}
	for $f\in\Conv(\R^n,\R)$, see Theorem~\ref{theorem:measuresDifferentialCycle} below. For $f\in\Conv(\R^n,\R)\cap C^2(\R^n)$, this measure is thus given by
	\begin{align*}
	\Psi_\tau(f)[B]=\int_{\pi^{-1}(B)\cap \mathrm{graph}(df)}\tau\quad\text{for all bounded Borel sets }B\subset\R^n.
	\end{align*}

	Our second main result provides a complete characterization of $\MAVal_k(\R^n)$ in terms of these operators.
	\begin{theorem}
		\label{maintherorem:RepresentationDifferentialForms}
		For a continuous map $\Psi:\Conv(\R^n,\R)\rightarrow\mathcal{M}(\R^n)$ the following are equivalent:
		\begin{enumerate}
			\item $\Psi\in\MAVal_k(\R^n)$.
			\item $\Psi$ is a linear combination of mixed Monge-Amp\`ere operators of quadratic type of degree $k$.
			\item There exists a differential form $\tau\in\Lambda^{n-k,k}$ such that for all $f\in\Conv(\R^n,\R)\cap C^2(\R^n)$,
			\begin{align*}
				\Psi(f)[B]=\int_{\pi^{-1}(B)\cap \mathrm{graph}(df)}\tau\quad\text{for all bounded Borel subsets}~B\subset\R^n.
			\end{align*}
			\item There exists a polynomial $P$ on the space of symmetric $(n\times n)$-matrices that is a linear combination of the $(k\times k)$-minors such that for all $f\in\Conv(\R^n,\R)\cap C^2(\R^n)$,
				\begin{align*}
					\Psi(f)[B]=\int_B P(D^2f(x))dx\quad\text{for all bounded Borel sets}~B\subset \R^n.
				\end{align*}
		\end{enumerate}
	\end{theorem}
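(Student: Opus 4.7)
The plan is to close the equivalence by first establishing the constructive implications $(3) \Rightarrow (4) \Rightarrow (2) \Rightarrow (1)$, and then the classification direction $(1) \Rightarrow (3)$, which is where the main work lies.

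For the constructive directions, $(3) \Rightarrow (4)$ is a direct computation: on a $C^2$-convex function $f$ the current $D(f)$ is integration over the graph $\{(x,df(x)):x\in\R^n\}$, and the pullback of a basis form $dx_I\wedge dp_J\in\Lambda^{n-k,k}$ along $x\mapsto (x,df(x))$ equals $\pm\det((\partial_i\partial_j f)_{i\in I^c,\,j\in J})\,dx_1\wedge\dots\wedge dx_n$, which is a $k\times k$ minor of $D^2f$; linear combinations over a basis of $\Lambda^{n-k,k}$ yield the required polynomial $P$. For $(4)\Rightarrow(2)$, each $k\times k$ minor of a symmetric matrix $A$ arises as a mixed discriminant of $k$ copies of $A$ with $n-k$ rank-one symmetric matrices, which are Hessians of suitable quadratic polynomials; a standard polarization argument on $\Sym^2\R^n$ shows that such combinations exhaust the span of all $k\times k$ minors. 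Finally, $(2)\Rightarrow(1)$ is a direct verification that mixed Monge-Amp\`ere operators of quadratic type are continuous, translation equivariant (Hessians of quadratics are constant), dually epi-translation invariant (Hessians ignore affine summands), locally determined (mixed MA is built from pointwise local data on $D^2f$), and $k$-homogeneous.

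For the direction $(1)\Rightarrow(3)$, I would first test $\Psi$ on quadratic functions $f_A(x):=\tfrac{1}{2}\langle Ax,x\rangle$ with $A\in\Sym^2\R^n$. Because $f_A(\cdot+y)-f_A$ is affine in $x$, dual epi-translation invariance gives $\Psi(f_A(\cdot+y))=\Psi(f_A)$, and combining this with translation equivariance forces $\Psi(f_A)$ to be translation invariant on $\R^n$, hence $\Psi(f_A)=P(A)\,d\vol_n$ for a function $P$ on $\Sym^2\R^n$. The polynomial structure of $k$-homogeneous dually epi-translation invariant valuations from \cite{Knoerrsupportduallyepi2021} then promotes $P$ to a $k$-homogeneous polynomial. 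Using locally determined together with continuity, I would extend the local formula $\Psi(f)[B]=\int_B P(D^2f(x))\,dx$ from quadratic to arbitrary $C^2$-convex $f$ by localizing $f$ to its quadratic Taylor polynomial at each point of $B$ and gluing via the Radon-measure structure of $\Psi(f)$.

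The hardest step will be forcing $P$ to be a linear combination of $k\times k$ minors. This is the measure-valued analogue of the density characterization in the Colesanti-Ludwig-Mussnig Hadwiger-type theorems for $\Conv(\R^n,\R)$, and I would attack it by testing $\Psi$ on approximating sequences $f_j\to f$ of smooth convex functions converging to singular limits (support functions of polytopes, piecewise-affine functions), where continuity of $\Psi(f_j)[B]$ against compactly supported test functions forces the monomials of $D^2f$ outside the span of $k\times k$ minors to cancel. Once $P$ is expressed as a linear combination of $k\times k$ minors, its coefficients give a form $\tau\in\Lambda^{n-k,k}$ whose pullback integral reproduces $\Psi$ on $C^2\cap\Conv(\R^n,\R)$ by the computation in $(3)\Rightarrow(4)$, and the joint continuity of $\Psi$ and the differential cycle operator $f\mapsto D(f)$ propagates the equality to all of $\Conv(\R^n,\R)$.
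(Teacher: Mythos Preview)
Your constructive implications $(3)\Rightarrow(4)\Rightarrow(2)\Rightarrow(1)$ are fine, and your Step~4 can in fact be made rigorous once the quadratic formula is in place: the vanishing of $k$-homogeneous valuations on pullbacks from subspaces of dimension $<k$ forces $P\bigl(\sum_{i=1}^{k-1} v_iv_i^T\bigr)=0$ for all $v_i\in\R^n$, hence (complexifying in the $v_i$) for all $v_i\in\C^n$, which covers all complex symmetric matrices of rank $<k$; then $P\in\mathcal{M}_k$ by Lemma~\ref{lemma:VanishingIdealSmallRank} since $\deg P=k$. The genuine gap is your Step~3, passing from $\Psi(f_A)=P(A)\,\vol_n$ on quadratics to $\Psi(f)[B]=\int_B P(D^2f(x))\,dx$ for general $C^2$-convex $f$ by ``localizing $f$ to its quadratic Taylor polynomial at each point''. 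This does not work: the locally determined property only yields $\Psi(f)|_V=\Psi(h)|_V$ when $f\equiv h$ \emph{exactly} on an open set $V$, and the Taylor polynomial $q_{x_0}$ never agrees with $f$ on any open set (nor can one in general glue $f$ to a quadratic while preserving convexity). Continuity provides only weak*-convergence of measures, which gives no control of densities at a point, and there is no rescaling equivariance available for a blow-up argument, since dilations act via the $\GL(n,\R)$-operation \eqref{eq:defGlnROperation} and send $\Psi$ to a different valuation. More fundamentally, nothing in the hypotheses forces $\Psi$ to be determined by its values on quadratics: the Goodey-Weil distribution $\GW(\Psi[\phi])$ is compactly supported, but knowing only its moments of degree $(2,\dots,2)$ does not pin it down.

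The paper closes $(1)\Rightarrow(3)$ by an entirely different mechanism that bypasses this localization issue. It first establishes the Klain-type embedding $\KL:\MAVal_k(\R^n)\hookrightarrow C(\Gr_k(\R^n))$ (Proposition~\ref{proposition:KlainEmbedding}), whose injectivity rests on Theorem~\ref{theorem:simpleValuations}. It then shows (Theorem~\ref{theorem:FormulaFourier-LaplaceCurvatureMeasures}) that the Fourier-Laplace transform of $\GW(\Psi[\phi])$ factors as $\frac{(-1)^k}{k!}Q[\Psi](z_1,\dots,z_k)\,\mathcal{F}(\phi)\bigl[\sum_j z_j\bigr]$ for a single symmetric polynomial $Q[\Psi]$ of degree $(2,\dots,2)$ independent of $\phi$, with $Q[\Psi]\in\mathcal{M}^2_k$ forced by the vanishing on linearly dependent arguments (Lemma~\ref{lemma:ZeroSetFourier-Laplace}). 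Matching $Q[\Psi]$ with $Q_\tau$ for a primitive $\tau\in P\Lambda^{n-k,k}$ then gives $\Psi=\Psi_\tau$, and the $C^2$-density formula in (4) follows \emph{a posteriori} from Proposition~\ref{proposition:C2FormulaCurvatureMeasurePolynomial} rather than serving as an intermediate step. If you want to salvage your route, what you need in place of Step~3 is precisely such an injectivity statement---either the Klain embedding or the Fourier-Laplace factorization---that lets you conclude $\Psi=\Psi_\tau$ from agreement on a determining family; agreement on quadratics alone does not suffice.
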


	The proof of this result is based on the observation that all of these constructions can be related to the same class of complex polynomials - namely the space spanned by $(k\times k)$-minors of complex symmetric $(n\times n)$-matrices. This reduces Theorem~\ref{maintherorem:RepresentationDifferentialForms} to a simple representation theoretic statement about these polynomials considered as a representation of $\GL(n,\C)$. In fact, this approach leads to the following stronger characterization result: Consider $\MAVal(\R^n)$ as a representation of $\GL(n,\R)$, where the operation is given by
	\begin{align}
		\label{eq:defGlnROperation}
		[g\cdot\Psi](f)[B]=\Psi(f\circ g,g^{-1}B)\quad\text{for}~f\in\Conv(\R^n,\R),~ B\subset\R^n~\text{bounded Borel set}.
	\end{align}
	\begin{theorem}
		\label{maintheorem:IrreducibilityMAVal}
		$\MAVal_k(\R^n)$ is a finite dimensional irreducible representation of $\GL(n,\R)$.
	\end{theorem}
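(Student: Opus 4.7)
The plan is to use Theorem~\ref{maintherorem:RepresentationDifferentialForms} to identify $\MAVal_k(\R^n)$ with a concrete space of polynomials and then invoke classical representation theory of $\GL(n,\C)$. By part (4) of that theorem, sending $\Psi$ to its associated polynomial $P$ yields a linear isomorphism of $\MAVal_k(\R^n)$ with the space $V_k$ of polynomials on $\Sym(n,\R)$ that are $\R$-linear combinations of $k\times k$ minors. Since $V_k$ is spanned by the $\binom{n}{k}^2$ such minors, finite-dimensionality is immediate. To transfer the $\GL(n,\R)$-action \eqref{eq:defGlnROperation}, I would use the chain rule $D^2(f\circ g)(x) = g^T D^2 f(gx)\,g$ and substitute $y = gx$ in the defining integral, obtaining
\[
(g\cdot P)(A) = |\det g|^{-1} P(g^T A\, g).
\]
Since $g\mapsto |\det g|^{-1}$ is a one-dimensional character, irreducibility under this action is equivalent to irreducibility of $V_k$ under the purely algebraic action $P(A)\mapsto P(g^T A\, g)$, which extends to a rational representation of $\GL(n,\C)$ on the complex span $V_k^\C\subset \C[\Sym(n,\C)]$ of $k\times k$ minors.

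The core step is then to identify $V_k^\C$ with the irreducible Schur module $S^{(2^k)}\C^n$ of highest weight $(2,\dots,2,0,\dots,0)$. The principal minor $P_0(A):=\det(A_{[k],[k]})$ is a highest weight vector for the upper-triangular Borel subgroup: a diagonal matrix with entries $(t_1,\dots,t_n)$ multiplies $P_0$ by $(t_1\cdots t_k)^2$, and an upper-triangular unipotent $g$ leaves the top-left $k\times k$ block's determinant invariant under $A\mapsto g^T A\, g$. A Cauchy-Binet expansion of $\det((g^T A\, g)_{I,J})$ in the minors of $A$ then allows one to show that every $k\times k$ minor lies in the $\GL(n,\C)$-orbit span of $P_0$, so $V_k^\C$ is a highest weight cyclic module and hence a quotient of $S^{(2^k)}\C^n$; a dimension count via the hook length formula forces equality and yields irreducibility.

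Real irreducibility of $V_k$, and thus of $\MAVal_k(\R^n)$, finally follows from complex irreducibility of $V_k^\C = V_k\otimes_\R \C$ by Galois descent: any proper $\GL(n,\R)$-invariant real subspace $W\subset V_k$ would, by Zariski density of $\GL(n,\R)$ in $\GL(n,\C)$, complexify to a proper $\GL(n,\C)$-invariant subspace $W\otimes\C\subset V_k^\C$, contradicting irreducibility. The main obstacle I expect is the explicit Cauchy-Binet argument showing that every $k\times k$ minor --- in particular the non-principal ones $\det(A_{I,J})$ with $I\neq J$ --- belongs to the cyclic span of $P_0$, which requires more than permutations and diagonals and involves choosing generic unipotent or elementary transvections to produce the off-diagonal minors from $P_0$.
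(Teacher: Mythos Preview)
Your strategy is essentially the paper's, but as written it is circular: in this paper, Theorem~\ref{maintherorem:RepresentationDifferentialForms} is proved \emph{after} Theorem~\ref{maintheorem:IrreducibilityMAVal}, and its proof explicitly invokes Theorem~\ref{maintheorem:IrreducibilityMAVal}. You therefore cannot simply cite Theorem~\ref{maintherorem:RepresentationDifferentialForms} part~(4) to obtain the isomorphism $\MAVal_k(\R^n)\cong V_k$. The fix is straightforward: the equivalence $(1)\Leftrightarrow(4)$ that you actually need can be assembled from Theorem~\ref{theorem:RepresentationDifferentialForms} (the bijection $P\Lambda^{n-k,k}\cong\MAVal_k(\R^n)$) together with Propositions~\ref{proposition:ImageP} and~\ref{proposition:C2FormulaCurvatureMeasurePolynomial}, none of which depend on Theorem~\ref{maintheorem:IrreducibilityMAVal}. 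The paper's own proof does exactly this, passing through $P\Lambda^{n-k,k}$ rather than directly to the space of minors; the two routes are equivalent via Proposition~\ref{proposition:ImageP}.

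Once the identification is in place, your argument and the paper's coincide: both establish that $\mathcal{M}_k$ is $\GL(n,\C)$-irreducible (Lemma~\ref{lemma:kMinorsIrreducible} in the paper) and then use Zariski density of $\GL(n,\R)$ in $\GL(n,\C)$. Regarding the obstacle you flag---showing that the non-principal minors $\det(A_{I,J})$ lie in the cyclic span of the principal one---the paper sidesteps the Cauchy--Binet/transvection computation entirely by using that $\mathcal{P}(SM_n)$ is a \emph{multiplicity-free} $\GL(n,\C)$-module. Since every irreducible summand of $\mathcal{M}_k$ is then determined by a highest weight vector that is a product of principal minors, and any such product with a factor $\Delta_l$, $l<k$, fails to vanish on rank-$(k-1)$ matrices (whereas every element of $\mathcal{M}_k$ does), the only highest weight vector in $\mathcal{M}_k$ is $\Delta_k$ itself. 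This is cleaner than your proposed route and makes the dimension count unnecessary; note also that a finite-dimensional polynomial $\GL(n,\C)$-module cyclically generated by a highest weight vector is already irreducible, not merely a quotient of the Schur module.
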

	Note that this directly implies that the three classes of operators given in Theorem~\ref{maintherorem:RepresentationDifferentialForms} span $\MAVal_k(\R^n)$, as they all define non-trivial $\GL(n,\R)$-invariant subspaces. \\	
	
	Let us remark that Alesker \cite{AleskerDescriptiontranslationinvariant2001} has given a similar characterization for the space of translation invariant, continuous valuations on the space of convex bodies, which is an infinite dimensional representation of the general linear group. The result above is not based on his characterization, however, both proofs rely on similar embeddings to reduce the problem to a purely representation theoretic statement.\\

	As an application of these results, we obtain a Hadwiger-type characterization of all elements $ \Psi\in\MAVal_k(\R^n)$ that are equivariant with respect to rotations, that is, that satisfy 
	\begin{align*}
		g\cdot\Psi=\Psi\quad\text{for}~g\in \SO(n),
	\end{align*}
	where we consider the operation of $\GL(n,\R)$ defined in \eqref{eq:defGlnROperation}.
	\begin{theorem}\label{maintheorem:HadwigerSOn}
		If $\Psi\in \MAVal_k(\R^n)$ is $\SO(n)$-equivariant, then $\Psi$ is a multiple of the $k$th Hessian measure, that is, there exists $c\in\C$ such that for $f\in\Conv(\R^n,\R)\cap C^2(\R^n)$,
		\begin{align*}
			\Psi(f)[B]=c\int_{B}[D^2f(x)]_kd\vol_n(x)\quad\text{for all bounded Borel sets}~B\subset\R^n,
		\end{align*}
		where $[D^2f(x)]_k$ denotes the $k$th elementary symmetric polynomial in the eigenvalues of $D^2f(x)$.
	\end{theorem}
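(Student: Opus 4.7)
The strategy is to reduce Theorem~\ref{maintheorem:HadwigerSOn} to a polynomial identity on symmetric matrices via the integral representation provided by Theorem~\ref{maintherorem:RepresentationDifferentialForms}(4). That equivalence yields a polynomial $P$ on the space $\Sym(n,\R)$ of symmetric $(n\times n)$-matrices, which is a linear combination of $(k\times k)$-minors, such that
\begin{align*}
	\Psi(f)[B]=\int_B P(D^2 f(x))\,dx
\end{align*}
for all $f\in\Conv(\R^n,\R)\cap C^2(\R^n)$ and all bounded Borel sets $B\subset\R^n$. The first step is to translate the $\SO(n)$-equivariance of $\Psi$ into an algebraic condition on $P$. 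Using the chain rule $D^2(f\circ g)(x)=g^T D^2 f(gx)g$ together with the change of variables $y=gx$ (with $|\det g|=1$ for $g\in\SO(n)$), the identity $g\cdot\Psi=\Psi$, tested against convex quadratic functions $f(x)=\tfrac{1}{2}x^T A x$ with $A$ positive definite, reduces to $P(g^T A g)=P(A)$ on the open cone of positive definite matrices. As $P$ is a polynomial, this extends to
\begin{align*}
	P(g^T A g)=P(A)\quad\text{for all } A\in\Sym(n,\R),\ g\in\SO(n).
\end{align*}

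The second step is to evaluate $P$ on diagonal matrices. For $D=\mathrm{diag}(\lambda_1,\dots,\lambda_n)$, any $(k\times k)$-minor $M_{IJ}$ of $D$ (indexed by row set $I$ and column set $J$) vanishes unless $I=J$, in which case $M_{II}(D)=\prod_{i\in I}\lambda_i$. Writing $P=\sum_{I,J}c_{IJ}M_{IJ}$ gives
\begin{align*}
	P(D)=\sum_{|I|=k}c_{II}\prod_{i\in I}\lambda_i.
\end{align*}
Every permutation of $(\lambda_1,\dots,\lambda_n)$ can be realized by conjugating $D$ with an element of $\SO(n)$; for odd permutations this requires composing a permutation matrix with a sign flip on two coordinates to keep the determinant positive. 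The invariance of $P$ therefore forces the polynomial $\lambda\mapsto P(\mathrm{diag}(\lambda))$ to be symmetric in the $\lambda_i$. Since the monomials $\prod_{i\in I}\lambda_i$ form a single $S_n$-orbit, all coefficients $c_{II}$ with $|I|=k$ must coincide, so $P(D)=c\cdot e_k(\lambda_1,\dots,\lambda_n)$ for some $c\in\C$, where $e_k$ is the $k$th elementary symmetric polynomial.

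The final step uses the spectral theorem: any $A\in\Sym(n,\R)$ can be written as $A=gDg^T$ with $g\in\SO(n)$ and $D$ diagonal with the eigenvalues of $A$ on its diagonal, whence $P(A)=P(D)=c\cdot[A]_k$ by the invariance established above. Inserting this into the integral representation of $\Psi$ completes the proof. The main nontrivial ingredient is Theorem~\ref{maintherorem:RepresentationDifferentialForms}(4), which reduces the classification to a problem on polynomials; once this is available, the remaining argument is classical invariant theory, with the only minor subtlety being the realization of odd eigenvalue permutations by $\SO(n)$-conjugation.
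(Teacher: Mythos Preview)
Your argument is correct and gives a clean alternative to the paper's proof. One minor slip: to turn an odd permutation matrix into an element of $\SO(n)$ you should compose with a sign flip on \emph{one} coordinate (determinant $-1$), not two (determinant $+1$); since conjugating a diagonal matrix by a diagonal sign matrix leaves it unchanged, the rest of the argument is unaffected.

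The paper takes a genuinely different route. In Section~\ref{section:Restrictions} it constructs a Klain-type embedding $\KL:\MAVal_k(\R^n)\to C(\Gr_k(\R^n))$, shows it is injective and $\mathrm{O}(n)$-equivariant (Proposition~\ref{proposition:KlainEmbedding} and Corollary~\ref{corollary:EquivKlainFunction}), and then observes that an $\SO(n)$-equivariant $\Psi$ has $\SO(n)$-invariant Klain function, which is forced to be constant because $\SO(n)$ acts transitively on $\Gr_k(\R^n)$; injectivity of $\KL$ then bounds the dimension by one, and the Hessian measure exhibits a nonzero element. This argument avoids Theorem~\ref{maintherorem:RepresentationDifferentialForms} altogether and in fact precedes it logically in the paper (the Klain embedding is an ingredient in the proof of the classification, via Theorem~\ref{theorem:FormulaFourier-LaplaceCurvatureMeasures}). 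Your approach instead treats the Hadwiger statement as a corollary of the classification, reducing it to the elementary fact that the only $\SO(n)$-conjugation-invariant polynomial in $\mathcal{M}_k$ is a multiple of the sum of principal $k\times k$ minors. Both proofs are short; the paper's is more self-contained and does not presuppose the heavier structure theorem, while yours is perhaps the more natural argument once Theorem~\ref{maintherorem:RepresentationDifferentialForms} is available.
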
 
	We use this classification to obtain a description of elements in $\MAVal(\R^n)$ that are equivariant with respect to $\GL(n,\R)$ in Section \ref{section:HadwigerTypeResult}, which gives an alternative characterization of the real Monge-Amp\`ere operator. In Section \ref{section:positivity} we also characterize the cone of non-negative valuations, that is, all $\Psi\in \MAVal(\R^n)$ with the property that $\Psi(f)$ defines a non-negative measure for all $f\in\Conv(\R^n,\R)$.

	\subsection{Plan of the article}
	In Section \ref{section:preliminaries} we recall some notation from convex geometry and discuss some properties of the differential cycle. Section \ref{section:DuallyEpiValuations} contains the relevant background on dually epi-translation invariant valuations on convex functions and adds minor generalizations of results from \cite{ColesantiEtAlhomogeneousdecompositiontheorem2020,Knoerrsupportduallyepi2021}. In particular, we consider the Goodey-Weil distributions associated to homogeneous valuations and the Fourier-Laplace transform of these distributions. In Section \ref{section:homogeneousDecomposition} we apply these results to measure-valued valuations and prove Theorem~\ref{maintheorem:characRealMA}. We also show how locally determined valuations can be constructed using the differential cycle.\\
	Section \ref{section:Restrictions} examines a notion of restriction for elements in $\MAVal_k(\R^n)$ to $k$-dimensional subspaces of $\R^n$, which we use to prove Theorem~\ref{maintheorem:HadwigerSOn}.\\
	Theorem~\ref{maintherorem:RepresentationDifferentialForms} and Theorem~\ref{maintheorem:IrreducibilityMAVal} are finally proved in Section \ref{section:characterization}. We first introduce two families of polynomials derived from primitive differential forms and show that these polynomials coincide with certain spaces generated by $(k\times k)$-minors using some basic representation theoretic tools. We then consider the Fourier-Laplace transform of the Goodey-Weil distributions of certain complex-valued valuations obtained from elements in $\MAVal_k(\R^n)$, and we show that the resulting holomorphic functions have a characteristic structure that involves the same families of polynomials.\\
	As an application, we examine the cone of non-negative valuations in Section \ref{section:positivity}.

\section{Preliminaries and notation}
	\label{section:preliminaries}
	We consider $\R^n$ as a Euclidean vector space equipped with its standard inner product $\langle\cdot,\cdot \rangle$ and induced norm $|\cdot|$. We denote the Lebesgue measure on $\R^n$ by $\vol_n$ and the volume of the unit ball $B_1(0)$ by $\omega_n$. The Grassmanian of all $k$-dimensional linear subspaces on $\R^n$ is denoted by $\Gr_k(\R^n)$. For $E\in\Gr_k(\R^n)$ we denote the Lebesgue measure on $E$ induced by the restriction of the scalar product by $\vol_E$.\\
	
	For a locally convex vector space $F$, that is, a complex vector space equipped with a Hausdorff topology induced by semi-norms, we denote the space of all continuous linear functionals on $F$ by $F'$. Note that $F'$ separates points in $F$, that is, $\lambda(v)=0$ for all $\lambda\in F'$ implies $v=0$ for $v\in F$.

	\subsection{Convex functions}
	\label{section:ConvexFunctions}
	We refer to the monographs by Rockafellar and Wets \cite{RockafellarWetsVariationalanalysis2009} and Schneider \cite{SchneiderConvexbodiesBrunn2014} for a comprehensive background on convex functions and convex bodies. We will only need the following simple remarks on the topology of the spaces of convex functions considered in this article:\\
	If $U\subset\R^n$ is open and convex, then we equip the space $\Conv(U,\R)$ of all convex functions $f:U\rightarrow \R$ with the topology of uniform convergence on compact subsets. In particular, $\Conv(U,\R)$ is a metrizable space. This topology coincides with the topology induced by pointwise convergence, compare \cite[Theorem~7.17]{RockafellarConvexanalysis1970}, and if $U=\R^n$, then this topology also coincides with the topology induced by epi-convergence. We omit the definition of epi-convergence as we will not use it in this article.
	
	\subsection{Translation invariant valuations on convex bodies}
	\label{section:PreliminariesVal}
	Let $\mathcal{K}(\R^n)$ denote the space of \emph{convex bodies}, that is, the set of all non-empty, compact, and convex subsets of $\R^n$ equipped with the Hausdorff metric.
	Given a convex body $K\in\mathcal{K}(\R^n)$ its \emph{support function} $h_K\in \Conv(\R^n,\R)$ is defined by
	\begin{align*}
		h_K(y)=\sup_{x\in K}\langle y,x\rangle \quad\text{for}~y\in\R^n.
	\end{align*}
	The support function determines $K$ uniquely. Furthermore it has the following well known properties:
	\begin{itemize}
		\item $h_{tK}=th_K$ for all $t\ge 0$, $K\in\mathcal{K}(\R^n)$.
		\item If $K,L$ are convex bodies such that $K\cup L$ is convex, then $h_{K\cup L}=h_K\vee h_L$ and  $h_{K\cap L}=h_K\wedge h_L$.
		\item $h_{K+x}(y)=h_K(y)+\langle y,x\rangle$ for all $x,y\in\R^n$, $K\in\mathcal{K}(\R^n)$.
		\item A sequence  $(K_j)_j$ of convex bodies converges to $K$ with respect to the Hausdorff metric if and only if $(h_{K_j})_j$ converges to $h_K$ uniformly on compact subsets.
	\end{itemize}
	A map $\mu:\mathcal{K}(\R^n)\rightarrow (G,+)$ into an Abelian semi-group is called a valuation if
	\begin{align*}
		\mu(K\cup L)+\mu(K\cap L)=\mu(K)+\mu(L)
	\end{align*}
	for all $K,L\in\mathcal{K}(\R^n)$ such that $K\cup L\in\mathcal{K}(\R^n)$. For a Hausdorff topological vector space $F$, let us denote by $\Val(\R^n,F)$ the space of all valuations $\mu:\mathcal{K}(\R^n)\rightarrow F$ that are 
	\begin{itemize}
		\item translation invariant, that is $\mu(K+x)=\mu(K)$ for all $K\in\mathcal{K}(\R^n)$, $x\in\R^n$,
		\item continuous with respect to the Hausdorff metric.
	\end{itemize}
	Let $\Val_k(\R^n,F)$ denote the subspace of all valuations $\mu\in\Val(\R^n,F)$ that are \emph{$k$-homogeneous}, that is, that satisfy $\mu(tK)=t^k\mu(K)$ for all $K\in\mathcal{K}(\R^n)$, $t\ge 0$. Then the following holds.
	\begin{theorem}[McMullen \cite{McMullenValuationsEulertype1977}]
		\label{theorem:McMullenDecomp}
		$\Val(\R^n,F)=\bigoplus_{k=0}^n\Val_k(\R^n,F)$
	\end{theorem}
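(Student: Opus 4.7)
The plan is to reduce the decomposition to McMullen's polynomiality theorem and then recover the homogeneous components via polynomial interpolation. Concretely, I would first establish the following key assertion: for any fixed convex bodies $K_1,\dots,K_m\in\mathcal{K}(\R^n)$, the map
\[(\lambda_1,\dots,\lambda_m)\in\R_{\ge 0}^m\longmapsto \mu\!\left(\sum_{i=1}^m\lambda_iK_i\right)\in F\]
is a polynomial of total degree at most $n$. Since $F$ is Hausdorff and $F'$ separates points, it suffices to check this for each scalar valuation $\lambda\circ\mu$ with $\lambda\in F'$, which reduces the claim to the classical setting. In the scalar case one argues by induction: the central step is showing that $\mu(K+jL)$ depends polynomially on $j\in\mathbb{N}$ of degree at most $n$, which is obtained by a lattice-style dissection of $jL$ combined with translation invariance and with the Groemer-type inclusion-exclusion extension of the two-term valuation identity to finite unions whose pairwise intersections are convex; continuity in the dilation parameter and density of the rationals then extend polynomiality to $\R_{\ge 0}$.

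Granted polynomiality, the homogeneous components are defined as follows. For each $K\in\mathcal{K}(\R^n)$ the map $t\mapsto\mu(tK)$ is a polynomial in $t\ge 0$ of degree at most $n$, so there exist unique elements $\mu_k(K)\in F$ with
\[\mu(tK)=\sum_{k=0}^n t^k\mu_k(K),\qquad t\ge 0.\]
Fixing $n+1$ distinct positive reals $t_0,\dots,t_n$, the invertibility of the Vandermonde matrix $(t_i^j)_{0\le i,j\le n}$ yields real constants $c_{k,i}$ with
\[\mu_k(K)=\sum_{i=0}^n c_{k,i}\,\mu(t_iK).\]
From this explicit formula, continuity, translation invariance, and the valuation identity for $\mu_k$ all follow directly from the corresponding properties of $\mu$, while $k$-homogeneity $\mu_k(sK)=s^k\mu_k(K)$ is read off by comparing the two polynomial expansions of $t\mapsto\mu(tsK)$ in $t$.

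Setting $t=1$ finally gives $\mu=\sum_{k=0}^n\mu_k$, so the subspaces $\Val_k(\R^n,F)$ span $\Val(\R^n,F)$. For directness, suppose $\sum_{k=0}^n\mu_k=0$ with $\mu_k\in\Val_k(\R^n,F)$. Then for every $K$ and every $t\ge 0$ one has $\sum_{k=0}^n t^k\mu_k(K)=0$ in $F$; applying any $\lambda\in F'$ yields a polynomial identity in $t$ whose coefficients all vanish, and since $F'$ separates points this forces $\mu_k(K)=0$ for all $k$ and $K$. The main obstacle is the polynomiality step, as the underlying dissection argument is delicate; by contrast, the passage from scalar to $F$-valued valuations is handled cleanly by testing against elements of $F'$, which is why the argument extends verbatim to the topological vector space setting of the statement.
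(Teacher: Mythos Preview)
The paper does not supply a proof of this statement; it is simply quoted as McMullen's theorem with a citation. Your argument is correct and is the standard way to pass from the classical scalar-valued result to the $F$-valued version: test against $\lambda\in F'$ to obtain polynomiality of $t\mapsto\lambda(\mu(tK))$, define the components in $F$ directly via Vandermonde interpolation, and then verify homogeneity, the valuation identity, translation invariance, continuity, and directness from the explicit formula $\mu_k=\sum_i c_{k,i}\,\mu(t_i\,\cdot\,)$. This is exactly the template the paper itself uses elsewhere: the proof of Theorem~\ref{theorem:homogeneousDecompVConv} extracts homogeneous components by inverting the Vandermonde matrix, and the proof of Theorem~\ref{theorem:simpleValuations} reduces an $F$-valued statement to the scalar case by composing with $\lambda\in F'$ and using that $F'$ separates points.

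Two minor remarks. First, your reduction requires $F'$ to separate points, i.e.\ $F$ locally convex; the paper records this as a standing assumption in the preliminaries, so there is no gap. Second, McMullen's combinatorial polynomiality argument (the ``delicate dissection'' you allude to) in fact works verbatim for valuations with values in any abelian group with cancellation, so the detour through $F'$ is a convenience rather than a necessity---but since you correctly flag this step as the substantive one and the paper is content to cite it, your treatment is appropriate.
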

	For $F=\C$ we will also denote these spaces by $\Val(\R^n):=\Val(\R^n,\C)$, $\Val_k(\R^n):=\Val_k(\R^n,\C)$. These spaces carry a natural topology: For a convex body $B\in\mathcal{K}(\R^n)$ with non-empty interior consider the set $\tilde{B}:=\{K\in\mathcal{K}(\R^n):K\subset B\}$. By the Blaschke selection theorem, $\tilde{B}$ is a compact subset of $\mathcal{K}(\R^n)$. For an open set $O\subset F$, consider the set
	\begin{align*}
		\mathcal{M}(\tilde{B},O):=\{\mu\in\Val(\R^n,F):\mu(K)\in O\quad\text{for all}~K\in\tilde{B}\}.
	\end{align*}
	These subsets form a basis for the compact-open topology on $\Val(\R^n,F)$ that does not depend on the choice of $B\in\mathcal{K}(\R^n)$. Note that this topology is Hausdorff due to Theorem~\ref{theorem:McMullenDecomp}.
		
	\subsection{Monge-Amp\`ere functions and the differential cycle}
		Monge-Amp\`ere functions were introduced by Fu in \cite{FuMongeAmpereFunctions1989}. We also refer to \cite{JerrardSomerigidityresults2010} for an extension of these results. Abusing notation, we denote by $\vol_n\in\Omega^n(\R^n)$ the volume form induced by the scalar product. 
			\begin{theorem}[Fu \cite{FuMongeAmpereFunctions1989} Theorem~2.0]
			\label{theorem:characterization_differential_cycle}
			Let $f:\R^n\rightarrow\mathbb{R}$ be a locally Lipschitzian function. There exists at most one integral $n$-current $S$ on $T^*\R^n$ such that
			\begin{enumerate}
				\item $S$ is closed, i.e. $\partial S=0$,
				\item $S$ is Lagrangian, i.e. $S\llcorner \omega_s=0$, where $\omega_s$ denotes the natural symplectic form,
				\item $S$ is locally vertically bounded, i.e. $\supp S\cap \pi^{-1}(A)$ is compact for all $A\subset \R^n$ compact,
				\item $S(\phi(x,y)\pi^*\vol)=\int_{\R^n}\phi(x,df(x))d\vol(x)$ for all $\phi\in C^\infty_c(T^*\R^n)$.
			\end{enumerate}
			Note that the right hand side of the last equation is well defined due to Rademacher's theorem.
		\end{theorem}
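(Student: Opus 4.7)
Suppose $S_1, S_2$ both satisfy (1)--(4), and set $T:=S_1-S_2$. Then $T$ is a normal $n$-current on $T^*\R^n$ that is closed, Lagrangian, locally vertically bounded, and annihilates $\phi\cdot\pi^*\vol_n$ for every $\phi\in C^\infty_c(T^*\R^n)$. By local vertical boundedness, it suffices to show that $T(\eta)=0$ for every smooth $n$-form $\eta$ whose support projects into a fixed compact subset of $\R^n$.

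Fix the product splitting $T^*\R^n=\R^n\times(\R^n)^*$ with coordinates $(x,y)$ and the symplectic form $\omega_s=\sum_i dy_i\wedge dx_i$ of bidegree $(1,1)$. Decompose every smooth $n$-form into bidegree pieces $\eta^{(n-k,k)}$ carrying $n-k$ of the $dx_i$'s and $k$ of the $dy_j$'s. Hypothesis (4) is that $T$ kills every $(n,0)$-form; hypothesis (2), the Lagrangian condition, is that $T$ kills every element of the ideal $\omega_s\wedge\Omega^{n-2}$; hypothesis (1), closedness, yields $T(d\beta)=0$ for compactly supported $(n-1)$-forms $\beta$, and since $d=d_x+d_y$ this couples bidegrees $(a+1,b)$ and $(a,b+1)$.

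The core of the argument is induction on $k$: assume that $T$ annihilates every $(n-k',k')$-form for $k'<k$, and deduce the same at level $k$. Given a model form $\eta=\phi\, dx_I\wedge dy_J$ with $|J|=k\ge 1$, the goal is to construct a compactly supported smooth $(n-k,k-1)$-form $\gamma$ with
\begin{align*}
d_y\gamma\equiv\eta\pmod{\omega_s\wedge\Omega^{n-2}}.
\end{align*}
Once such a $\gamma$ is in hand, closedness gives $0=T(d\gamma)=T(d_x\gamma)+T(d_y\gamma)$; the first term vanishes by the inductive hypothesis, since $d_x\gamma$ has bidegree $(n-k+1,k-1)$, while $d_y\gamma$ differs from $\eta$ by an element of $\omega_s\wedge\Omega^{n-2}$, which $T$ kills by the Lagrangian property. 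Hence $T(\eta)=0$. The input needed is the pointwise surjectivity of the induced map from $\Lambda^{(n-k,k-1)}$ onto $\Lambda^{(n-k,k)}/\omega_s\wedge\Lambda^{(n-k-1,k-1)}$, which is an instance of the symplectic primitive (Lefschetz) decomposition of $\Lambda^\bullet(\R^{2n})^*$, combined with antidifferentiation in a single $y_j$-variable to pass from pointwise linear algebra to a smooth compactly supported $\gamma$.

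The principal obstacle will be the global construction of $\gamma$: the pointwise linear-algebra statement is clean, but producing a smooth $\gamma$ with prescribed compact support whose $d_y$-image matches $\eta$ modulo the Lagrangian ideal requires combining antidifferentiation in $y$ with a partition of unity in $x$, and the resulting error terms must be pushed down to lower bidegree in a controlled way. A bookkeeping variant of this induction, organized around the $\mathfrak{sp}(2n)$-structure of $\Lambda^\bullet(\R^{2n})^*$, should carry the argument through. Note that integrality of $S_1$ and $S_2$ plays no role in this uniqueness proof: it works for all normal currents satisfying (1)--(4).
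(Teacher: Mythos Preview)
The paper does not prove this theorem; it is quoted from Fu and used as a black box. So there is no in-paper argument to compare against, and your sketch has to be judged on its own terms.

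Your overall scheme---set $T=S_1-S_2$ and induct on the fiber degree $k$, using closedness to relate degree $k$ to degree $k-1$ and the Lagrangian condition to discard $\omega_s$-multiples---is the standard route and is essentially Fu's. Your remark that integrality plays no role in uniqueness is also correct.

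The gap is in the mechanism of the inductive step. You assert that for every $(n-k,k)$-form $\eta$ one can find a $(n-k,k-1)$-form $\gamma$ with $d_y\gamma\equiv\eta\pmod{\omega_s\wedge\Omega^{n-2}}$, attributing this to an unspecified ``induced map'' on constant forms plus antidifferentiation in a single $y_j$. This is false already for $n=2$, $k=1$. Take $\eta=\phi\,dx_1\wedge dy_2$. Writing a general bidegree-$(1,0)$ form as $\gamma=\psi_1\,dx_1+\psi_2\,dx_2$ and noting that the only $\omega_s$-multiples of bidegree $(1,1)$ are $\chi\,(dx_1\wedge dy_1+dx_2\wedge dy_2)$, matching coefficients in $d_y\gamma-\eta=\chi\,\omega_s$ forces $\partial_{y_1}\psi_2=0$, $\partial_{y_2}\psi_1=-\phi$, and $\partial_{y_1}\psi_1=\partial_{y_2}\psi_2$. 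Eliminating, one finds that $\partial_{y_1}^2\phi$ must vanish identically---which fails for generic $\phi\in C^\infty_c$. So the error terms produced by antidifferentiation are \emph{not} absorbed by the Lagrangian ideal, and no ``bookkeeping variant organized around the $\mathfrak{sp}(2n)$-structure'' repairs this: the obstruction is analytic, not purely algebraic.

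What is missing is a substantive use of property~(3). If one writes the closedness relations $T(d_y\gamma)=0$ and the Lagrangian relations $T(\omega_s\wedge\beta)=0$ as a coupled system on the coefficient distributions $T_{I,J}$, the conclusion is not that each $T_{I,J}$ vanishes outright, but that certain of them are annihilated by some $\partial_{y_l}$; local vertical boundedness then kills these, since a nonzero distribution independent of a fiber coordinate has support unbounded in that direction over any base point in its $x$-support. In your sketch, vertical boundedness appears only as a device to make pairings with $x$-compactly supported test forms legitimate; in the actual proof it is the decisive hypothesis at every stage, not just a technicality at the end.
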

		If such a current exists, the function $f$ is called Monge-Amp\`ere. The corresponding current is denoted by $D(f)$ (it is denoted by $[df]$ in \cite{FuMongeAmpereFunctions1989}) and is called the \emph{differential cycle of $f$}.  For the purposes of this article, we will only need the fact that any element of $\Conv(\R^n,\R)$ admits a differential cycle, compare \cite[Proposition~3.1]{FuMongeAmpereFunctions1989}\\
	
		Let us state the following two properties of the differential cycle. The first is a special case of \cite[Proposition~2.4]{FuMongeAmpereFunctions1989}.
	\begin{proposition}
		\label{proposition:differentialCycleSum}
		Let $f\in\Conv(\R^n,\R)$. If $h\in\Conv(\R^n,\R)\cap C^2(\R^n)$, then the differential cycle of $f+h$ is given by 
		\begin{align*}
			F(f+h)=G_{h*}D(f),
		\end{align*}
		where $G_h:T^*\R^n\rightarrow T^*\R^n$ is given by $(x,y)\mapsto (x,y+dh(x))$.
	\end{proposition}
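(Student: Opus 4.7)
The plan is to invoke the uniqueness part of Theorem~\ref{theorem:characterization_differential_cycle}: since $f+h$ is convex it is Monge-Amp\`ere (by Fu's Proposition~3.1 cited just before the statement), hence admits a differential cycle $D(f+h)$. If I can show that $G_{h*}D(f)$ satisfies the four characterizing properties of that current, uniqueness will force $D(f+h)=G_{h*}D(f)$. So the whole argument is a checklist against properties (1)--(4) of Theorem~\ref{theorem:characterization_differential_cycle}.

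Two of the four items are formal. Closedness (1) follows from $\partial G_{h*}=G_{h*}\partial$ and $\partial D(f)=0$. For local vertical boundedness (3), observe that $\pi\circ G_h=\pi$, so $G_h$ maps $\pi$-fibres to $\pi$-fibres; in particular $\supp G_{h*}D(f)\cap \pi^{-1}(A)=G_h\bigl(\supp D(f)\cap\pi^{-1}(A)\bigr)$, which is compact whenever the right-hand set is.

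The Lagrangian property (2) is where the regularity of $h$ enters. It suffices to show $G_h^*\omega_s=\omega_s$, since then $(G_{h*}D(f))\llcorner\omega_s=G_{h*}\bigl(D(f)\llcorner G_h^*\omega_s\bigr)=0$. Writing the natural symplectic form as $\omega_s=\sum_i dx_i\wedge dy_i$, one computes $G_h^*dy_i=dy_i+d(\partial h/\partial x_i)$, so
\begin{align*}
G_h^*\omega_s=\omega_s+\sum_{i,j}\frac{\partial^2 h}{\partial x_i\partial x_j}\,dx_i\wedge dx_j,
\end{align*}
and the second sum vanishes by symmetry of the Hessian of $h\in C^2$. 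This is the only place $C^2$-regularity of $h$ is really used, and it is the step I would flag as the ``content'' of the proof, though it is not technically hard.

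Finally, for property (4), take $\phi\in C^\infty_c(T^*\R^n)$ and use $G_h^*\pi^*\vol_n=\pi^*\vol_n$ together with $G_h^*\phi(x,y)=\phi(x,y+dh(x))$ to get
\begin{align*}
G_{h*}D(f)\bigl(\phi(x,y)\pi^*\vol_n\bigr)=D(f)\bigl(\phi(x,y+dh(x))\pi^*\vol_n\bigr)=\int_{\R^n}\phi(x,df(x)+dh(x))\,d\vol_n(x),
\end{align*}
where the last equality is property (4) for $D(f)$. Since $d(f+h)(x)=df(x)+dh(x)$ for a.e.\ $x$ by Rademacher (and everywhere where $df$ exists), this is exactly property (4) for a differential cycle of $f+h$, and uniqueness finishes the argument.
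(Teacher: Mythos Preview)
Your proof is correct. The paper does not give its own proof of this proposition; it simply records it as ``a special case of \cite[Proposition~2.4]{FuMongeAmpereFunctions1989}''. Your argument---verifying properties (1)--(4) of Theorem~\ref{theorem:characterization_differential_cycle} for $G_{h*}D(f)$ and invoking uniqueness---is exactly the natural route and presumably the one taken in Fu's paper. Two small points you pass over but which are routine: (i) one should note that $G_h$ is a proper bi-Lipschitz $C^1$-diffeomorphism, so $G_{h*}D(f)$ is again an integral $n$-current, which is part of the hypothesis in the uniqueness statement; (ii) in checking (4), the pulled-back test function $\phi\circ G_h$ is only $C^1$, not $C^\infty$, but property (4) for $D(f)$ extends to continuous compactly supported $\phi$ by density, so this causes no trouble.
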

	The following result is shown in \cite[Proposition~2.5]{FuMongeAmpereFunctions1989} for orientation preserving diffeomorphisms. For orientation preserving diffeomorphisms the same reasoning can be applied, compare \cite[Proposition~4.4]{KnoerrSmoothvaluationsconvex2020}.
	\begin{proposition}
		\label{proposition:differential_cycle_and_diffeomorphisms}
		Let $\phi:\R^n\rightarrow \R^n$ be a diffeomorphism of class $C^{1,1}$ and $f:\R^n\rightarrow \R$ a Monge-Amp\`ere function. Then $f\circ\phi$ is Monge-Amp\`ere and
		\begin{align*}
			D(f\circ\phi)=\left(\phi^\#\right)_*D(f),	
		\end{align*}
		if $\phi$ is orientation preserving and
		\begin{align*}
			D(f\circ\phi)=-\left(\phi^\#\right)_*D(f),
		\end{align*}
		if $\phi$ is orientation reversing. Here $\phi^\#:T^*\R^n\rightarrow T^*\R^n$ is given by $(x,y)\mapsto (\phi^{-1}(x),\phi^*y)$.
	\end{proposition}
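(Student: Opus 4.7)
The plan is to apply the uniqueness clause of Theorem~\ref{theorem:characterization_differential_cycle} to the candidate current
\[
 S := \operatorname{sign}(\det D\phi)\cdot (\phi^\#)_* D(f),
\]
showing that $S$ satisfies conditions (1)--(4) with $f$ replaced by $f\circ\phi$. This simultaneously proves that $f\circ\phi$ is Monge-Amp\`ere and identifies $D(f\circ\phi)=S$, which gives both signed formulas at once. Since $\phi$ is $C^{1,1}$, the differential $D\phi$ is locally Lipschitz, so $\phi^\#(x,y)=(\phi^{-1}(x),\phi^*y)$ is a bi-Lipschitz homeomorphism of $T^*\R^n$ and the pushforward of an integral current is again an integral current; this is the only place where the regularity hypothesis is used.

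The first three properties are essentially formal. Since $\partial$ commutes with pushforward, $\partial S=\operatorname{sign}(\det D\phi)\cdot(\phi^\#)_*\partial D(f)=0$. The map $\phi^\#$ is the canonical cotangent lift of $\phi$, and such lifts are symplectomorphisms, i.e.\ $(\phi^\#)^*\omega_s=\omega_s$; hence $S\llcorner\omega_s=\operatorname{sign}(\det D\phi)\cdot (\phi^\#)_*(D(f)\llcorner(\phi^\#)^*\omega_s)=0$. Local vertical boundedness follows from the identity $\pi\circ\phi^\#=\phi^{-1}\circ\pi$: for compact $A\subset\R^n$, $\operatorname{supp} S\cap\pi^{-1}(A)\subset\phi^\#\bigl(\operatorname{supp} D(f)\cap\pi^{-1}(\phi(A))\bigr)$, which is compact by Theorem~\ref{theorem:characterization_differential_cycle}(3) applied to $D(f)$ and the compact set $\phi(A)$.

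The heart of the argument is property (4). Let $\psi\in C^\infty_c(T^*\R^n)$. Using $(\phi^\#)^*\pi^*\vol_n=\pi^*(\phi^{-1})^*\vol_n$ and that $(\phi^{-1})^*\vol_n=\det(D\phi^{-1})\,\vol_n$ as a signed $n$-form, the factor $\det(D\phi^{-1})$, being a function of $x$ alone, can be pulled out of the form-valued expression. Applying property (4) for $D(f)$ gives
\[
 S[\psi\cdot\pi^*\vol_n]=\operatorname{sign}(\det D\phi)\int_{\R^n}\psi\bigl(\phi^{-1}(x),\phi^*df(x)\bigr)\det(D\phi^{-1})(x)\,d\vol_n(x).
\]
Changing variables $x=\phi(u)$ and using the chain rule identity $\phi^*df(\phi(u))=d(f\circ\phi)(u)$, the Jacobian $|\det D\phi(u)|$ combines with $\det(D\phi^{-1})(\phi(u))=1/\det D\phi(u)$ to produce exactly $\operatorname{sign}(\det D\phi(u))$, which cancels the prefactor. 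The right-hand side reduces to $\int_{\R^n}\psi(u,d(f\circ\phi)(u))\,d\vol_n(u)$, as required.

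The one subtlety—and the main place one must be careful—is the signed bookkeeping in the last step: $\vol_n$ is a specific oriented $n$-form, so pullback by $\phi^{-1}$ introduces a signed factor $\det(D\phi^{-1})$, whereas the subsequent change of variables in a Lebesgue integral uses $|\det D\phi|$. The global sign $\operatorname{sign}(\det D\phi)$ built into $S$ is precisely what reconciles these two conventions, yielding the $+$ sign for orientation-preserving $\phi$ and the $-$ sign for orientation-reversing $\phi$.
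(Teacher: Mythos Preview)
Your argument is correct and is exactly the natural approach: verify that the candidate current satisfies the four characterizing properties of Theorem~\ref{theorem:characterization_differential_cycle} and invoke uniqueness. The paper does not actually give its own proof of this proposition; it cites \cite{FuMongeAmpereFunctions1989} for the orientation-preserving case and \cite{KnoerrSmoothvaluationsconvex2020} for the general case, and the proofs in those references proceed precisely along the lines you have written. One very minor point: property~(4) in Theorem~\ref{theorem:characterization_differential_cycle} is stated for $\psi\in C^\infty_c$, whereas after pulling back by the only $C^{0,1}$ map $\phi^\#$ you are effectively feeding $D(f)$ a Lipschitz test function times $\pi^*\vol_n$; this is harmless by a routine approximation since both sides of~(4) are continuous in the $C^0$ topology, but it is worth a word.
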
 
\section{Dually epi-translation invariant valuations}
	\label{section:DuallyEpiValuations}
 \subsection{Homogeneous decomposition and Goodey-Weil distributions}	
	Let $F$ be a locally convex vector space, $U\subset\R^n$ an open and convex set, and let $\VConv(U,F)$ denote the space of all continuous valuations $\mu$ on $\Conv(U,\R)$ with values in $F$ that are \emph{dually epi-translation invariant}, that is, that satisfy
	\begin{align*}
		\mu(f+l)=\mu(f)\quad \text{for all}~f\in\Conv(U,\R),~l:\R^n\rightarrow\R ~\text{affine}.
	\end{align*}
	In most cases, it is enough to consider the case $U=\R^n$ due to the following simple result.
	\begin{lemma}[\cite{Knoerrsupportduallyepi2021} Lemma~7.7]
		\label{lemma:InjectivityRestriction}
		The map
		\begin{align*}
			\res^*:\VConv(U,F)&\rightarrow\VConv(\R^n,F)\\
			\mu&\mapsto \left[f\mapsto \mu(f|_U)\right]
		\end{align*}
		is well defined and injective.
	\end{lemma}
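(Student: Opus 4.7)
The plan is to verify well-definedness by direct inspection and then to reduce injectivity to a local extension argument, using the existence of compact support for dually epi-translation invariant valuations.

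For well-definedness, I would check that the restriction map $f \mapsto f|_U$ from $\Conv(\R^n,\R)$ to $\Conv(U,\R)$ is continuous (locally uniform convergence on $\R^n$ restricts to locally uniform convergence on $U$), preserves the pointwise lattice operations $\vee$ and $\wedge$, and sends affine functions on $\R^n$ to affine functions on $U$. Composing with $\mu \in \VConv(U,F)$ then produces a map in $\VConv(\R^n,F)$: the valuation identity and continuity are inherited, and dual epi-translation invariance follows because $(f+l)|_U = f|_U + l|_U$ with $l|_U$ affine.

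The substantive content is injectivity. Suppose $\mu \in \VConv(U,F)$ satisfies $\mu(f|_U) = 0$ for all $f \in \Conv(\R^n,\R)$; I want to conclude $\mu = 0$. Here I would invoke the support theorem from \cite{Knoerrsupportduallyepi2021}, which provides a compact set $K \subset U$ with the property that $\mu(g_1) = \mu(g_2)$ whenever $g_1, g_2 \in \Conv(U,\R)$ agree on an open neighborhood of $K$. Given an arbitrary $g \in \Conv(U,\R)$, it therefore suffices to exhibit $\tilde g \in \Conv(\R^n,\R)$ that agrees with $g$ on an open neighborhood of $K$, for then $\mu(g) = \mu(\tilde g|_U) = 0$ by hypothesis.

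For the extension, I would choose an open bounded convex set $V$ with $K \subset V$ and $\overline V \subset U$, and define
\[
\tilde g(x) := \sup\bigl\{\, g(y) + \langle v, x - y\rangle : y \in V,\ v \in \partial g(y)\,\bigr\}.
\]
Local Lipschitz continuity of $g$ on $U$ ensures that $\partial g(y)$ is uniformly bounded for $y \in V$, so $\tilde g$ is a finite-valued convex function on $\R^n$. The supporting hyperplane inequality gives $\tilde g \le g$ on $V$, while evaluating the sup at $y = x$ with any $v \in \partial g(x)$ gives $\tilde g \ge g$ on $V$; hence the two agree on $V$, which contains $K$.

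The main obstacle is the support theorem itself, which is a non-trivial input established in \cite{Knoerrsupportduallyepi2021}; once it is available, the extension step is elementary and the remainder is a routine chase through the definitions.
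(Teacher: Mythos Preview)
Your proposal is correct. The paper does not supply its own proof of this lemma---it is quoted verbatim from \cite{Knoerrsupportduallyepi2021}---and your argument via the support theorem together with the convex extension $\tilde g(x)=\sup\{g(y)+\langle v,x-y\rangle: y\in V,\ v\in\partial g(y)\}$ is precisely the natural route; this is the approach taken in the cited reference.
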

	Consider the subspace $\VConv_k(U,F)$ of all \emph{$k$-homogeneous} valuations, that is, all $\mu\in\VConv(U,F)$ such that $\mu(tf)=t^k\mu(f)$ for all $t\ge 0$, $f\in\Conv(U,\R)$. 	For $F=\C$ we will also denote these spaces by $\VConv(U)$ and $\VConv_k(U)$. 
	We have the following decomposition, which for $U=\R^n$ was also obtained by Colesanti, Ludwig and Mussnig \cite{ColesantiEtAlhomogeneousdecompositiontheorem2020} in the scalar-valued case.
	\begin{theorem}
		\label{theorem:homogeneousDecompVConv}
		\begin{align*}
			\VConv(U,F)=\bigoplus_{k=0}^n\VConv_k(U,F)
		\end{align*}
	\end{theorem}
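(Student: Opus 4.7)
The plan is to combine the scalar-valued decomposition of Colesanti--Ludwig--Mussnig \cite{ColesantiEtAlhomogeneousdecompositiontheorem2020} with the restriction embedding of Lemma~\ref{lemma:InjectivityRestriction} to reduce the statement to a Lagrange-interpolation argument. The guiding observation is that $F$ is locally convex, so by Hahn--Banach the dual $F'$ separates points on $F$; hence vector-valued polynomiality can always be tested by pairing with $\lambda\in F'$.

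First I would establish polynomial dependence under scaling: for any $\mu\in\VConv(U,F)$ and any $g\in\Conv(U,\R)$, the map $t\mapsto \mu(tg)$ from $[0,\infty)$ to $F$ is a polynomial of degree at most $n$. To verify this it suffices, by separation of $F'$, to show that $t\mapsto\lambda\circ\mu(tg)$ is a polynomial of degree $\le n$ for every $\lambda\in F'$. Since $\lambda\circ\mu\in\VConv(U,\C)$, applying $\res^*$ produces $\res^*(\lambda\circ\mu)\in\VConv(\R^n,\C)$, and for this element the scalar-valued homogeneous decomposition of \cite{ColesantiEtAlhomogeneousdecompositiontheorem2020} immediately gives the desired polynomial behaviour on $\Conv(\R^n,\R)$. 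To push this from restrictions $f|_U$ back to arbitrary $g\in\Conv(U,\R)$ -- which need not extend to a finite-valued convex function on all of $\R^n$ -- I would invoke the support theory for dually epi-translation invariant valuations developed in \cite{Knoerrsupportduallyepi2021}, which allows $g$ to be replaced, on a neighbourhood of the compact support of $\mu$, by a convex function defined on $\R^n$.

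Next I would extract the homogeneous components by Lagrange interpolation. Fix $n+1$ distinct nodes $t_0,\dots,t_n\in(0,\infty)$, let $L_0,\dots,L_n$ be the associated Lagrange basis polynomials, and write $L_i(t)=\sum_{k=0}^n c_{k,i}\,t^k$. Define
\[
	\mu_k(g):=\sum_{i=0}^n c_{k,i}\,\mu(t_ig),\qquad g\in\Conv(U,\R),\quad k=0,\dots,n.
\]
Each $\mu_k$ is a finite $\C$-linear combination of the maps $g\mapsto\mu(t_ig)$, each of which inherits continuity, the valuation property, and dual epi-translation invariance from $\mu$; hence $\mu_k\in\VConv(U,F)$. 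By Lagrange interpolation and Step~1 one obtains
\[
	\mu(tg)=\sum_{k=0}^n t^k\mu_k(g)\qquad\text{for all } t\ge 0,
\]
so setting $t=1$ yields $\mu=\sum_{k=0}^n\mu_k$. Applying the same construction to $sg$ in place of $g$ and comparing coefficients of the resulting polynomials (again after pairing with $\lambda\in F'$) forces $\mu_k(sg)=s^k\mu_k(g)$, so $\mu_k\in\VConv_k(U,F)$.

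Uniqueness of the decomposition is immediate: if $\sum_k\mu_k=0$ with $\mu_k\in\VConv_k(U,F)$, then for every $g\in\Conv(U,\R)$ the polynomial identity $\sum_k t^k\mu_k(g)=0$ on $[0,\infty)$, tested against any $\lambda\in F'$, gives $\mu_k(g)=0$ for all $k$. The main obstacle is Step~1: the scalar case for $U=\R^n$ is available off the shelf, but transferring polynomial dependence on scaling to arbitrary $g\in\Conv(U,\R)$ -- rather than to restrictions of functions in $\Conv(\R^n,\R)$ -- is the technical point that needs the support theory from \cite{Knoerrsupportduallyepi2021}; once this is in place, the remaining passage from the scalar to the $F$-valued setting is routine thanks to the separation of points by $F'$.
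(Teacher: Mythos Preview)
Your approach shares the Vandermonde/Lagrange interpolation idea with the paper, but differs in how the passage from $\R^n$ to general $U$ is handled. The paper never establishes polynomial dependence of $t\mapsto\mu(tg)$ for arbitrary $g\in\Conv(U,\R)$. Instead it cites the $F$-valued decomposition on $\R^n$ as already known from \cite[Theorem~1]{Knoerrsupportduallyepi2021}, applies it to $\res^*\mu\in\VConv(\R^n,F)$, and then observes that since $\res^*$ commutes with scaling, the homogeneous components of $\res^*\mu$ obtained by Vandermonde interpolation lie automatically in the image of $\res^*$: one has $\mu_k=\res^*\tilde\mu_k$ with $\tilde\mu_k:=\sum_j c_{kj}\,\mu(j\,\cdot)$ defined directly on $\Conv(U,\R)$. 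Homogeneity of $\tilde\mu_k$ and the identity $\mu=\sum_k\tilde\mu_k$ then follow from the injectivity of $\res^*$ alone (Lemma~\ref{lemma:InjectivityRestriction}), so the argument never leaves the class of restrictions $f|_U$.

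Your Step~1, by contrast, tries to prove polynomiality for every $g\in\Conv(U,\R)$ and for that purpose invokes support theory to replace $g$ near the support of $\mu$ by a restriction. This detour can presumably be completed, but it requires knowing that each $\lambda\circ\mu\in\VConv(U,\C)$ has compact support contained in $U$ and depends only on the germ of its argument there---facts you should verify are proved in \cite{Knoerrsupportduallyepi2021} without relying on the very decomposition you are establishing. The paper's route sidesteps this entirely: injectivity of $\res^*$ already encodes enough locality, and no evaluation on functions outside the range of restriction is ever needed. A minor further difference is that you reduce to the scalar case via Hahn--Banach, whereas the paper cites the $F$-valued $\R^n$ result directly; both are legitimate.
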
	
	\begin{proof}
		This is essentially contained in Section 7.2 of \cite{Knoerrsupportduallyepi2021}, but it is explicitly stated only for the case $U=\R^n$ in \cite[Theorem~1]{Knoerrsupportduallyepi2021}. Let us deduce the statement above from this special case. Let $\mu\in\VConv(U,F)$ and define $\mu^t\in\VConv(U,F)$ for $t\ge0$ by  $\mu^t(f):=\mu(tf)$ for $f\in\Conv(U,\R)$. Then $[\res^*\mu^t](f)=[\res^*\mu](tf)$.
		By \cite[Theorem~1]{Knoerrsupportduallyepi2021} there thus exist valuations $\mu_k\in \VConv_k(\R^n,F)$ such that $\res^*\mu=\sum_{k=0}^n\mu_k$. In particular,
		\begin{align*}
			[\res^*\mu_t](f)=\sum_{k=0}^{n}t^k\mu_k(f)\quad\text{for}~f\in\Conv(\R^n,\R),t\ge 0.
		\end{align*}
		Plugging in $t=1,\dots,n+1$ and using the inverse of the Vandermonde matrix, we obtain $c_{kj}\in\R$ with
		\begin{align*}
			\mu_k(f)=\sum_{j=1}^{n+1}c_{kj}[\res^*\mu^j](f)=\res^*\left[\sum_{j=1}^{n+1}c_{kj}\mu(j\cdot)\right](f).
		\end{align*}
		If we define $\tilde{\mu}_k\in\VConv(U,F)$ by $\tilde{\mu}:=\sum_{j=1}^{n+1}c_{kj}\mu(j\cdot)$, we thus obtain
		\begin{align*}
			\res^*\tilde{\mu}_k^t=\mu_k(t\cdot)=t^k\mu_k=\res^*(t^k\tilde{\mu}_k)\quad\text{for}~t\ge0,
		\end{align*}
		so $\tilde{\mu}_k(t\cdot)=t^k\tilde{\mu}_k$ for $t\ge0$ due to the injectivity of $\res^*$. Thus $\tilde{\mu}_k\in \VConv_k(U,F)$. Similarly,
		\begin{align*}
			\res^*\left[\sum_{k=0}^{n}\tilde{\mu}_k\right](f)=\sum_{k=0}^{n}[\res^*\tilde{\mu}_k](f)=\sum_{k=0}^{n}\mu_k(f)=[\res^*\mu](f).
		\end{align*}
		As $\res^*$ is injective, this implies $\mu=\sum_{k=0}^{n}\tilde{\mu}_k$, which completes the proof.
		\end{proof}

	For the rest of this section, we will only be concerned with the case $U=\R^n$. Note that for $\mu\in\VConv_k(\R^n,F)$, the polarization $\bar{\mu}:\Conv(\R^n,\R)^k\rightarrow F$, 
	\begin{align*}
		\bar{\mu}(f_1,\dots,f_k):=&\frac{1}{k!}\frac{\partial^k}{\partial \lambda_1\dots\partial\lambda_k}\Big|_0\mu\left(\sum_{i=1}^k\lambda_i f_i\right),
	\end{align*}
	is well defined because $(\lambda_1,\dots,\lambda_k)\mapsto \mu\left(\sum_{i=1}^k\lambda_i f_i\right)$ is a polynomial in $\lambda_1,\dots,\lambda_k>0$ due to Theorem \ref{theorem:homogeneousDecompVConv}. Moreover, $\mu$ is essentially a multilinear functional that satisfies $\bar{\mu}(f,\dots,f)=\mu(f)$. As shown in \cite{KnoerrSingularvaluationsHadwiger2022}, the polarization lifts to a distribution on $(\R^n)^k$. For complex-valued valuations, these distributions may be characterized in the following way.
	\begin{theorem}[\cite{Knoerrsupportduallyepi2021} Theorem~2]
		\label{theorem:GW}
			For every $\mu\in\VConv_k(\R^n)$ there exists a unique symmetric distribution $\GW(\mu)$ on $(\R^n)^k$ with compact support which satisfies the following property: If $f_1,...,f_k\in \Conv(\R^n,\R)\cap C^\infty(\R^n)$, then
		\begin{align*}
		\GW(\mu)[f_1\otimes...\otimes f_k]=\bar{\mu}(f_1,...,f_k).
		\end{align*}
		In particular, $\mu$ is uniquely determined by $\GW(\mu)$.
	\end{theorem}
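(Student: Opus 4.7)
The plan is to construct $\GW(\mu)$ as the extension of the polarization $\bar{\mu}\colon\Conv(\R^n,\R)^k\to\C$, which by Theorem~\ref{theorem:homogeneousDecompVConv} is a well-defined symmetric $k$-linear map, separately continuous on $\Conv(\R^n,\R)$ equipped with the topology of local uniform convergence. Since smooth convex functions are dense in $\Conv(\R^n,\R)$ (e.g.\ by mollification), $\bar{\mu}$ is uniquely determined by its values on pure tensors $f_1\otimes\cdots\otimes f_k$ with $f_i\in\Conv(\R^n,\R)\cap C^\infty(\R^n)$. Such tensors span a dense subspace of $C^\infty_c((\R^n)^k)$ once one knows they generate all of $C^\infty_c(\R^n)$ in each factor, so uniqueness of $\GW(\mu)$ is essentially automatic and existence is the real task.

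For existence I would first extend $\bar{\mu}$ from smooth convex functions to arbitrary $\phi\in C^\infty_c(\R^n)$ in each argument via the difference-of-convex trick: write $\phi=(\phi+C|x|^2)-C|x|^2$ with $C$ large enough that $\phi+C|x|^2$ is convex on a neighborhood of $\supp\phi$, then modify it outside to a globally smooth convex function. Setting
\begin{align*}
\bar{\mu}(\phi_1,\ldots,\phi_k):=\sum_{\epsilon\in\{\pm1\}^k}\epsilon_1\cdots\epsilon_k\,\bar{\mu}(\phi_1^{\epsilon_1},\ldots,\phi_k^{\epsilon_k})
\end{align*}
is well defined by the multilinearity of $\bar{\mu}$ on convex arguments. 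The crucial next step is to establish a continuity estimate of the form
\begin{align*}
|\bar{\mu}(\phi_1,\ldots,\phi_k)|\le C_K\prod_{i=1}^k\|\phi_i\|_{C^2(K)}
\end{align*}
for all $\phi_i\in C^\infty_c(K)$, with $K\subset\R^n$ a fixed compact set. Such a bound extends $\bar{\mu}$ to a separately continuous multilinear map on test functions, hence via the standard kernel-type argument to a distribution on $(\R^n)^k$ of finite order.

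Compact support of $\GW(\mu)$ should come from dual epi-translation invariance: since $\bar{\mu}(f_1+l,f_2,\ldots,f_k)=\bar{\mu}(f_1,\ldots,f_k)$ for every affine $l$, differentiating gives that $\bar{\mu}$ vanishes whenever one of its arguments is affine. Combined with separate continuity, I would argue by an approximation argument that there exists a compact $K\subset\R^n$ such that any $f\in\Conv(\R^n,\R)$ which is affine on a neighborhood of $K$ contributes $0$, forcing $\supp\GW(\mu)\subset K^k$. Symmetry of the distribution is immediate from the symmetry of $\bar{\mu}$.

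The main obstacle will clearly be the $C^2$-continuity estimate. The assumed continuity of $\mu$ is only with respect to local uniform convergence, which is much weaker than $C^2$-convergence, so the bound must be extracted from the valuation property rather than from any a priori regularity hypothesis. My strategy would mirror the classical Goodey-Weil argument for valuations on convex bodies: first obtain a $\sup$-norm estimate $|\bar{\mu}(\phi_1,\ldots,\phi_k)|\le C\prod\|\phi_i\|_\infty$ when the $\phi_i$ are piecewise linear convex, using the valuation property to decompose on a simplicial subdivision of $K$, and then bootstrap to general test functions via the convex regularization $\phi_i\mapsto\phi_i+C|x|^2$, whose second-order term translates into the $C^2$ control one sees in the target estimate. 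Once this step is secured, symmetry, uniqueness, and compact support follow as indicated above.
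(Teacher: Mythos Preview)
The paper does not prove this theorem; it is quoted verbatim from \cite{Knoerrsupportduallyepi2021} and used as a black box. So there is no in-paper argument to compare against, and I can only comment on your sketch relative to the actual proof in that reference.

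Your overall architecture is right---polarize, extend by the difference-of-convex trick, and bound in a $C^2$-norm---but two points deserve correction.

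First, the continuity estimate does not go through ``simplicial subdivisions'' of $K$ and the valuation identity; I do not see what would be subdivided or how a $\sup$-norm bound would emerge. The actual mechanism is a compactness argument in $\Conv(\R^n,\R)$: for $\phi_i\in C^\infty_c(K)$ with $\|\phi_i\|_{C^2}\le C_i$, the rescaled functions $\phi_i/C_i+|x|^2$ and $|x|^2$ all lie in a \emph{fixed} compact subset of $\Conv(\R^n,\R)$ (convex, equal to $|x|^2$ off $K$, $C^0$-bounded on $K$), so continuity of $\bar\mu$ together with its $k$-multilinearity gives $|\bar\mu(\phi_1,\dots,\phi_k)|\le c(K)\prod_i C_i$ directly. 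The kernel theorem then yields a distribution on $(\R^n)^k$.

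Second---and this is the real gap---your argument for compact support does not work. Dual epi-translation invariance tells you only that $\bar\mu$ vanishes when one argument is \emph{globally} affine; it does not manufacture a compact $K_0\subset\R^n$ such that $\bar\mu(\phi_1,\dots,\phi_k)=0$ whenever some $\phi_i$ is supported off $K_0$. Your proposed ``approximation argument'' cannot close this: if $\supp\phi_i$ drifts to infinity, the convex pieces $\phi_i+C|x|^2$ and $C|x|^2$ are both large and no cancellation is forced by continuity alone. In \cite{Knoerrsupportduallyepi2021} the existence of such a $K_0$ (the \emph{support} of $\mu$) is established beforehand by a contradiction/diagonal argument: assuming no compact support, one builds a locally uniformly convergent sequence on which $\mu$ is unbounded. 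This is the substantive content of the theorem and is logically prior to the distribution construction, not a corollary of it.
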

	A similar construction was used by Goodey and Weil \cite{GoodeyWeilDistributionsvaluations1984} in the context of continuous translation invariant valuations on convex bodies, and we call $\GW(\mu)$ the Goodey-Weil distribution of $\mu\in\VConv_k(\R^n)$. One may also define these distributions for valuations with values in an arbitrary locally convex vector space, however, these distributions do not have compact support in general. We refer to \cite{Knoerrsupportduallyepi2021} for more details.

	\subsection{Fourier-Laplace transform and dually simple valuations}
		In this section we obtain a description of dually simple valuations in $\VConv(U,F)$. In fact, the results provide an alternative characterization of $k$-homogeneous valuations. Let us say that $\mu\in\VConv(U,F)$ vanishes on a $k$-dimensional subspace $E\in\Gr_k(\R^n)$ if $ \mu(\pi_E^*f|_U)=0$ for all $f\in\Conv(E,\R)$. We will consider the valuation $f\mapsto \mu(\pi_E^*f|_U)$ as the restriction of $\mu\in\VConv(U,F)$ to the subspace $E$. Note that this defines an element of $\VConv(E,F)$.
		\begin{theorem}
			\label{theorem:simpleValuations}
			If $\mu\in\VConv_k(U,F)$ vanishes on all $k$-dimensional subspaces in $\R^n$, then $\mu=0$. In particular, $\mu\in\VConv(U,F)$ is dually simple if and only if $\mu$ is $n$-homogeneous.
		\end{theorem}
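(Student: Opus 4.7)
The plan is to reduce the problem to a Fourier-type identity for Goodey--Weil distributions in the scalar case. By Lemma~\ref{lemma:InjectivityRestriction} the restriction map $\res^*$ is injective, so it suffices to treat $U=\R^n$. Since $F'$ separates points in $F$ and $\lambda\circ\mu\in\VConv_k(\R^n)$ inherits the vanishing hypothesis for every $\lambda\in F'$, we may further assume $F=\C$.

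In that case $\mu$ is determined by its Goodey--Weil distribution $\GW(\mu)$ from Theorem~\ref{theorem:GW}, a compactly supported symmetric distribution on $(\R^n)^k$. For $E\in\Gr_k(\R^n)$, the hypothesis $\mu(\pi_E^*g)=0$ for all $g\in\Conv(E,\R)$ gives, after polarizing $\mu\bigl(\sum_j\lambda_j\pi_E^*f_j\bigr)=0$ in the parameters $\lambda_j\ge 0$,
\begin{equation*}
\GW(\mu)\bigl[(\pi_E^*f_1)\otimes\cdots\otimes(\pi_E^*f_k)\bigr]=0
\qquad\text{for all }f_1,\dots,f_k\in\Conv(E,\R)\cap C^\infty(E).
\end{equation*}
Every compactly supported smooth function $g$ on $E$ admits a DC-decomposition $g=(g+c|\cdot|^2)-c|\cdot|^2$ into two smooth convex functions for sufficiently large $c$. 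Since $\GW(\mu)$ depends only on test functions through their germs on its compact support, a standard cutoff argument combined with multilinearity in the DC-decomposition extends the identity to
\begin{equation*}
\GW(\mu)\bigl[(g_1\circ\pi_E)\otimes\cdots\otimes(g_k\circ\pi_E)\bigr]=0
\end{equation*}
for arbitrary smooth functions $g_1,\dots,g_k$ on $E$.

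The decisive specialization is $g_j(y)=e^{-\sqrt{-1}\langle\eta_j,y\rangle}$ with $\eta_j\in E$, which shows that the Fourier transform $\widehat{\GW(\mu)}(\eta_1,\dots,\eta_k)$ vanishes whenever $(\eta_1,\dots,\eta_k)\in E^k$ for some $E\in\Gr_k(\R^n)$. Since any $k$-tuple in $(\R^n)^k$ spans a subspace of dimension at most $k$ and can therefore be enclosed in some $E\in\Gr_k(\R^n)$, one has $\bigcup_{E\in\Gr_k(\R^n)}E^k=(\R^n)^k$; hence $\widehat{\GW(\mu)}\equiv 0$, forcing $\GW(\mu)=0$ and thus $\mu=0$.

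For the final equivalence, if $\mu\in\VConv_n(\R^n,F)$ and $E\subsetneq\R^n$ is proper, then $f\mapsto\mu(\pi_E^*f)$ lies in $\VConv(E,F)$ and is $n$-homogeneous; Theorem~\ref{theorem:homogeneousDecompVConv} applied to $E$ (of ambient dimension $\dim E<n$) forces this restriction to vanish, so $\mu$ is dually simple. Conversely, if $\mu$ is dually simple, expanding $\mu(t\pi_E^*f)=\sum_kt^k\mu_k(\pi_E^*f)=0$ as a polynomial in $t\ge 0$ shows that every homogeneous component $\mu_k$ is itself dually simple, and the main part of the theorem yields $\mu_k=0$ for $k<n$, leaving $\mu=\mu_n$. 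The main technical hurdle will be the extension step in the Goodey--Weil argument, where the absence of linear structure on $\Conv(E,\R)$ forces us to exploit both the compactness of $\supp\GW(\mu)$ and the DC-decomposition to pass from convex test tensors to arbitrary ones; once that is handled, the remainder reduces to the elementary observation $\bigcup_{E\in\Gr_k(\R^n)}E^k=(\R^n)^k$.
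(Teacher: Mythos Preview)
Your proof is correct and follows the same overall strategy as the paper: reduce to $U=\R^n$ via Lemma~\ref{lemma:InjectivityRestriction}, reduce to $F=\C$ by composing with $\lambda\in F'$, and then kill $\GW(\mu)$ by a Fourier argument. The paper itself merely cites the scalar case from an external reference and carries out the reductions; you supply the scalar argument in full. Your treatment of the ``in particular'' clause also matches the paper's induction.

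There is one worthwhile simplification. The DC-decomposition step you flag as the main technical hurdle is unnecessary. Instead of specializing to the complex exponentials $g_j(y)=e^{-i\langle\eta_j,y\rangle}$ (which are not convex and force the extension argument), use the \emph{real} exponentials $g_j(y)=e^{-\langle\eta_j,y\rangle}$ with $\eta_j\in E$. These already lie in $\Conv(E,\R)\cap C^\infty(E)$, so Theorem~\ref{theorem:GW} and the hypothesis give
\[
\mathcal{F}(\GW(\mu))[i\eta_1,\dots,i\eta_k]=\GW(\mu)\bigl[\pi_E^*g_1\otimes\cdots\otimes\pi_E^*g_k\bigr]=\bar\mu(\pi_E^*g_1,\dots,\pi_E^*g_k)=0
\]
directly, with no DC or cutoff needed. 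Thus the Fourier--Laplace transform vanishes on $(iE)^k$ for every $E\in\Gr_k(\R^n)$, hence on the totally real subspace $(i\R^n)^k$, hence identically by holomorphy. This is the route the paper's discussion of $\mathcal{F}(\GW(\mu))$ and Lemma~\ref{lemma:interpretation_fourier_maxdegree} is set up for. Your DC argument does go through (first multiply $g_j$ by a cutoff equal to $1$ near $\pi_E(\supp\GW(\mu))$ so that it becomes compactly supported, whence $g_j=(g_j+c|\cdot|^2)-c|\cdot|^2$ is a \emph{global} DC decomposition for large $c$), but the Fourier--Laplace route removes the only delicate point.
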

		\begin{proof}
			Due to Lemma~\ref{lemma:InjectivityRestriction} it is enough to consider the case $U=\R^n$. For complex-valued valuations, this was shown in \cite[Theorem~3]{KnoerrUnitarilyinvariantvaluations2021}. For the general case, note that $\mu\in \VConv_k(\R^n,F)$ vanishes identically if and only if the complex-valued valuation $\lambda\circ \mu\in\VConv(\R^n)$ vanishes for all $\lambda\in F'$, where $F'$ denotes the topological dual space of $F$. If $\mu$ vanishes on all $k$-dimensional subspaces, then the same holds true for $\lambda\circ\mu$ for $\lambda\in F'$. \cite[Theorem~3]{KnoerrUnitarilyinvariantvaluations2021} thus implies $\lambda\circ\mu=0$ for all $\lambda\in F'$. As $F'$ separates points in $F$, we obtain $\mu=0$, which shows the first claim.\\
			
			For the second claim, note that the restriction of any valuation in $\VConv_k(\R^n,F)$ to a subspace of dimension smaller than $k$ vanishes due to Theorem~\ref{theorem:homogeneousDecompVConv}. If $\mu\in\VConv(\R^n)$ is a dually simple valuation and $\mu=\sum_{k=0}^{n}\mu_k$ is its homogeneous decomposition, then its restriction to a $k$-dimensional subspace $E$ is thus given by $\mu(\pi_E^*f)=\sum_{j=0}^{k}\mu_j(\pi_E^*f)$ for $f\in \Conv(E,\R)$. We can thus use induction on $k<n$ to show that $\mu_k$ vanishes on all $k$-dimensional subspaces, which implies $\mu_k=0$ for $0\le k\le n-1$. Thus $\mu=\mu_n$ is $n$-homogeneous.
		\end{proof}
		In \cite{KnoerrUnitarilyinvariantvaluations2021} the complex-valued case was obtained as a simple consequence of the properties of the Fourier-Laplace transform of the Goodey-Weil distributions. A different proof of the characterization of dually simple valuations in $\VConv(\R^n)$ was given by Colesanti, Ludwig and Mussnig in \cite{ColesantiEtAlHadwigertheoremconvex2022}.\\		
		
		As the Fourier-Laplace transform of Goodey-Weil distributions will play an important role in the proof of Theorem~\ref{maintherorem:RepresentationDifferentialForms}, let us recall the construction and some of its basic properties. Because the distribution $\GW(\mu)$ is compactly supported for $\mu\in\VConv_k(\R^n)$, its Fourier-Laplace transform $\mathcal{F}(\GW(\mu))$ defines an entire function on $(\C^n)^k$, given for $z_1,\dots,z_k\in\C^n$ by
		\begin{align*}
			\mathcal{F}(\GW(\mu))[z_1,\dots,z_k]=\GW(\mu)[\exp(i\langle z_1,\cdot\rangle)\otimes\dots\otimes\exp(i\langle z_k,\cdot\rangle)].
		\end{align*}
		If $z_1=ix_1,\dots,z_k=ix_k$ for $x_1,\dots,x_k\in \R^n$, then this function is given by evaluating $\GW(\mu)$ in convex functions defined on the real subspace $\mathrm{span}(x_1,\dots,x_k)$ of $\R^n$, that is, we are considering the restriction of a $k$-homogeneous valuation $\mu$ to a subspace of dimension at most $k$. The restrictions of these valuations are given by the following characterization of valuations of maximal degree.		
		\begin{theorem}[Colesanti-Ludwig-Mussnig \cite{ColesantiEtAlhomogeneousdecompositiontheorem2020} Theorem~5]
			\label{theorem:valuations_top_degree}
			$\mu\in\VConv_n(\R^n)$ if and only if there exists a (necessarily unique) function $\phi\in C_c(\R^n)$ such that
			\begin{align*}
			\mu(f)=\int_{\R^n}\phi d\MA(f)\quad\forall f\in\Conv(\R^n,\R).
			\end{align*}
		\end{theorem}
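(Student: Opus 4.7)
The plan is as follows. For the easy direction, I would verify directly that for any $\phi\in C_c(\R^n)$, the map $\mu_\phi(f):=\int_{\R^n}\phi\,d\MA(f)$ belongs to $\VConv_n(\R^n)$: continuity follows from the weak$^*$-continuity of $\MA:\Conv(\R^n,\R)\to\M(\R^n)$ paired with the compactly supported test function $\phi$; the valuation property is inherited from $\MA$; $n$-homogeneity is immediate from $\MA(tf)=t^n\MA(f)$; and dual epi-translation invariance follows from $\MA(f+l)=\MA(f)$ for affine $l$, since adding an affine function does not change the Hessian.

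For the hard direction, the starting point is the observation that the cone function $c_x(y):=|y-x|$ satisfies $\MA(c_x)=\omega_n\delta_x$: indeed, $\partial c_x$ is single-valued on $\R^n\setminus\{x\}$ with image in $S^{n-1}$, while $\partial c_x(\{x\})=\bar{B}_1(0)$ has Lebesgue measure $\omega_n$. This forces the candidate
\begin{align*}
\phi(x):=\omega_n^{-1}\mu(c_x),
\end{align*}
whose continuity is automatic from the continuity of $\mu$ and of the map $x\mapsto c_x\in\Conv(\R^n,\R)$ in the topology of locally uniform convergence.

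To establish compact support of $\phi$, I would use the compact support of the Goodey--Weil distribution $\GW(\mu)$ provided by Theorem~\ref{theorem:GW} together with dual epi-translation invariance: replacing $c_x$ by $c_x-l_x$ where $l_x$ is the first-order Taylor polynomial of $c_x$ at a fixed base point, a direct computation shows that $c_x-l_x\to 0$ locally uniformly as $|x|\to\infty$, so $\mu(c_x)=\mu(c_x-l_x)\to 0$ and $\phi(x)\to 0$ at infinity. The stronger assertion that $\phi$ vanishes identically outside a compact set then follows from the refined support theory for dually epi-translation invariant valuations from \cite{Knoerrsupportduallyepi2021}, which ensures that $\mu$ is supported on a compact subset of $\R^n$ and that $\mu(c_x)=0$ whenever $x$ lies outside this support.

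The main obstacle is the identity $\mu(f)=\int\phi\,d\MA(f)$ for general $f\in\Conv(\R^n,\R)$. By construction, $\mu$ and $\mu_\phi$ agree on every cone function $c_x$, so the remaining task is to show that any element of $\VConv_n(\R^n)$ that vanishes on every cone function is zero; uniqueness of $\phi$ is then automatic. I would approach this either by (i) approximating general $f$ by piecewise affine convex functions, whose Monge-Amp\`ere measures are finite weighted sums of Dirac masses at vertices, and using the valuation property to reduce the identity to evaluation on cone functions concentrated at those vertices; or by (ii) comparing the Goodey--Weil distributions of $\mu$ and $\mu_\phi$ directly, using that both are symmetric compactly supported distributions on $(\R^n)^n$ annihilating affine functions in each slot, and that a distribution of this type is determined by its values on diagonal tensors $c_x^{\otimes n}$. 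Approach (ii) is structurally cleaner, while (i) is more concrete but requires careful bookkeeping of how the Monge-Amp\`ere measure behaves under piecewise linear approximation.
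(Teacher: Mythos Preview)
This theorem is not proved in the paper; it is quoted as Theorem~5 of Colesanti--Ludwig--Mussnig \cite{ColesantiEtAlhomogeneousdecompositiontheorem2020} and used as an input. So there is no proof in the present paper to compare against. That said, let me comment on the sketch itself.

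The easy direction and the candidate $\phi(x)=\omega_n^{-1}\mu(c_x)$ are correct, and the paper even records the relevant identity $\MA(h_K(\cdot-x))=\vol_n(K)\delta_x$ (with $c_x=h_{B_1(0)}(\cdot-x)$). Your decay argument for $\phi$ is fine as stated but only gives $\phi\to 0$ at infinity; the leap to compact support via ``refined support theory'' is correct in spirit but is really the whole content of that step, not a triviality.

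The genuine gap is your proposed reduction ``a valuation in $\VConv_n(\R^n)$ vanishing on all cone functions is zero.'' Neither route you offer closes this. In (i), the valuation identity does not reduce a piecewise affine function to cone functions: the pointwise min/max decomposition produces pieces that are affine, not cones, and $\MA$ of an affine function is zero, so you learn nothing about $\mu$ there. In (ii), the claim that a symmetric compactly supported distribution on $(\R^n)^n$ annihilating affine slots is determined by its values on the diagonal tensors $c_x^{\otimes n}$ is unproven and in fact not obvious at all; the diagonal is a very thin set. Ironically, the tools already developed in this paper give a clean argument: by Corollary~\ref{cor:characteristicFunctionMaxDeg} (which rests on Hadwiger's characterization of $\Val_n$), one has $S(\mu)[K,x]=\omega_n^{-1}S(\mu)[B_1(0),x]\vol_n(K)$, so vanishing on all $c_x$ forces $S(\mu)\equiv 0$, and then Theorem~\ref{theorem_embedding_Cont} (injectivity of $S$, built on Lemma~\ref{lemma:valuations-on-conv:characteristic-function-injective}) gives $\mu=0$. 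This is essentially the mechanism behind the original proof as well.
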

		This has the following implication. Let $\MA_E$ denote the real Monge-Amp\`ere operator on $E\in\Gr_k(\R^n)$.
		\begin{lemma}[\cite{KnoerrUnitarilyinvariantvaluations2021} Lemma~2.6]
			\label{lemma:interpretation_fourier_maxdegree}
			Let $\mu\in \VConv_k(\R^n)$. For $E\in \Gr_k(\R^n)$ let $\phi_E\in C_c(E)$ denote the unique function such that $\mu(\pi_{E}^*f)=\int_E\phi_Ed\MA(f)$ for all $f\in \Conv(E,\R)$. Then for $z_1,\dots,z_k\in E_\C:=E\otimes_\R\C$
			\begin{align*}
			\mathcal{F}(\GW(\mu))[z_1,\dots,z_k]=\frac{(-1)^k}{k!}\det\left(
			\langle z_i,z_j\rangle\right)_{i,j=1}^k\mathcal{F}_E(\phi_E)\left[\sum_{i=1}^{k}z_i\right].
			\end{align*}
			Here $\mathcal{F}_E$ denotes the Fourier-Laplace-Laplace transform on $L^1(E)$ and $\langle\cdot,\cdot\rangle$ denotes the $\C$-linear extension of the scalar product on $\R^n$.
		\end{lemma}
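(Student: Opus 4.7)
The plan is to prove the identity on the totally real subspace $(iE)^k \subset E_\C^k$ by testing the Fourier-Laplace transform against exponentials of linear functions, and then to extend to all of $E_\C^k$ by analytic continuation.

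First, I would establish a mixed restriction formula. By Theorem~\ref{theorem:valuations_top_degree}, the valuation $f \mapsto \mu(\pi_E^* f)$ on $\Conv(E,\R)$ is $k$-homogeneous on a $k$-dimensional space, hence of top degree, so it admits the representation $\mu(\pi_E^* f) = \int_E \phi_E \, d\MA_E(f)$ with a uniquely determined $\phi_E \in C_c(E)$. Polarizing in $k$ arguments, which is legal because both sides are polynomial of degree $k$ in the scaling parameters by Theorem~\ref{theorem:homogeneousDecompVConv}, yields for all $g_1, \ldots, g_k \in \Conv(E,\R) \cap C^2(E)$
$$
\bar{\mu}(\pi_E^* g_1, \ldots, \pi_E^* g_k) = \int_E \phi_E \, d\MA_E(g_1, \ldots, g_k),
$$
where the right-hand side is the mixed Monge-Amp\`ere measure on $E$.

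Next, for $x_1, \ldots, x_k \in E$ I would specialize to $g_j(y) := \exp(-\langle x_j, y\rangle)$, which is smooth and convex on $E$. Since $x_j \in E$, the pullback $\pi_E^* g_j$ coincides with $\exp(i\langle ix_j, \cdot\rangle)$ on $\R^n$. A direct computation gives $D^2 g_j(y) = \exp(-\langle x_j, y\rangle)\, x_j x_j^T$, a rank-one symmetric operator on $E$, and the rank-one mixed-discriminant identity $D_E(x_1 x_1^T, \ldots, x_k x_k^T) = \frac{1}{k!}\det(\langle x_i, x_j\rangle)_{i,j=1}^k$, which follows from $\det(\sum_j \lambda_j x_j x_j^T) = \det(\langle x_i, x_j\rangle)\prod_j \lambda_j$, yields
$$
d\MA_E(g_1, \ldots, g_k)(y) = \frac{1}{k!}\det(\langle x_i, x_j\rangle)\exp\!\left(-\!\left\langle \sum_{j=1}^k x_j, y\right\rangle\right) d\vol_E(y).
$$
Combining this with Theorem~\ref{theorem:GW}, the left-hand side of the restriction formula equals $\mathcal{F}(\GW(\mu))[ix_1, \ldots, ix_k]$, while the right-hand side equals $\frac{1}{k!}\det(\langle x_i, x_j\rangle)\mathcal{F}_E(\phi_E)(i\sum_j x_j)$. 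Setting $z_j := ix_j$, so that $\langle z_i, z_j\rangle = -\langle x_i, x_j\rangle$ and consequently $\det(\langle x_i, x_j\rangle) = (-1)^k \det(\langle z_i, z_j\rangle)$, produces the claimed identity for all $(z_1, \ldots, z_k) \in (iE)^k$.

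Finally, both sides of the asserted formula are entire functions of $(z_1, \ldots, z_k) \in E_\C^k$: the left-hand side by Paley-Wiener, since $\GW(\mu)$ is compactly supported, and the right-hand side manifestly, as $\mathcal{F}_E(\phi_E)$ is entire and the determinant is polynomial. Since $(iE)^k$ is a totally real submanifold of maximal real dimension in $E_\C^k$, a holomorphic function on $E_\C^k$ is determined by its restriction there, and the identity extends to all of $E_\C^k$. The only delicate bookkeeping is tracking the factor $(-1)^k/k!$ correctly through the substitution $z_j = ix_j$ in the Gram determinant; otherwise the argument is a short combination of the restriction identity with the rank-one mixed-discriminant computation.
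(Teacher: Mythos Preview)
Your proof is correct. The paper does not give its own proof of this lemma---it is stated as a citation from \cite{KnoerrUnitarilyinvariantvaluations2021}---but your argument (evaluate $\GW(\mu)$ on exponentials $\exp(-\langle x_j,\cdot\rangle)$ with $x_j\in E$, reduce the mixed Monge--Amp\`ere density to the rank-one mixed-discriminant identity, then analytically continue from the totally real slice $(iE)^k$) is precisely the natural route and matches the computation the paper itself carries out later in the proof of Theorem~\ref{theorem:FormulaFourier-LaplaceCurvatureMeasures}.
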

		
		The following vanishing property will be crucial for the proof of Theorem~\ref{maintherorem:RepresentationDifferentialForms}.
		\begin{lemma}
			\label{lemma:ZeroSetFourier-Laplace}
			Let $\mu\in\VConv_k(\R^n)$. $\mathcal{F}(\GW(\mu))[z_1,\dots,z_k]=0$ if $z_1,\dots,z_k\in\C^n$ are linearly dependent.
		\end{lemma}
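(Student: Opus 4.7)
The plan is to reduce to the situation of Lemma~\ref{lemma:interpretation_fourier_maxdegree} by restricting to a real subspace of $\R^n$ that is large enough to contain all $z_j$. Writing $z_j=x_j+iy_j$ with $x_j,y_j\in\R^n$, any non-trivial complex relation $\sum c_j z_j = 0$ yields two real-linear relations among the $2k$ vectors $\{x_j,y_j\}_{j=1}^k$, so the real span $V_\R:=\mathrm{span}_\R\{x_j,y_j\}$ has $\dim_\R V_\R\le 2(k-1)$.

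When $\dim_\R V_\R\le k$, pick $E\in\Gr_k(\R^n)$ containing $V_\R$; then $z_j\in E_\C$ for all $j$, and Lemma~\ref{lemma:interpretation_fourier_maxdegree} gives
\[
\mathcal{F}(\GW(\mu))[z_1,\dots,z_k]=\frac{(-1)^k}{k!}\,\det(\langle z_i,z_j\rangle)_{i,j=1}^k\,\mathcal{F}_E(\phi_E)\Bigl(\sum_j z_j\Bigr).
\]
The bilinear form $\langle\cdot,\cdot\rangle$ restricted to $E_\C$ is the complexification of a positive-definite inner product on $E$ and is therefore non-degenerate, so its Gram determinant vanishes exactly when $z_1,\dots,z_k$ are $\C$-linearly dependent in $E_\C$, which holds by hypothesis.

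To reach this favorable situation in general I would induct on $n$. Whenever $V_\R\subsetneq\R^n$, take $E\in\Gr_m(\R^n)$ with $\max(k,\dim V_\R)\le m<n$ containing $V_\R$; if instead $m<k$, the restriction $\mu|_E$ already vanishes by Theorem~\ref{theorem:homogeneousDecompVConv}. The restricted valuation $\mu|_E\in\VConv_k(E)$ defined by $f\mapsto\mu(\pi_E^*f)$ has a Goodey--Weil distribution whose Fourier--Laplace transform agrees with $\mathcal{F}(\GW(\mu))$ on $(E_\C)^k$, so the inductive hypothesis applied in the lower-dimensional ambient space $E$ gives the desired vanishing. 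The base case $n=k$ is exactly Lemma~\ref{lemma:interpretation_fourier_maxdegree} applied with $E=\R^n\in\Gr_k(\R^n)$, which produces the Gram determinant factor and hence vanishes on every $\C$-linearly dependent tuple in $\C^n$.

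The main obstacle will be the intermediate range $k<n\le 2(k-1)$, where $V_\R$ can fill $\R^n$ and strict induction on $n$ cannot proceed. In that regime I would invoke a secondary induction on the complex rank $r\le k-1$ of $(z_1,\dots,z_k)$, together with the full symmetry of $\mathcal{F}(\GW(\mu))$ in its $k$ arguments: when $r\le k-2$ some $(k-1)$-element sub-tuple is already linearly dependent, which by symmetry reduces the problem to a statement in one fewer variable that is accessible to the inductive hypothesis on $k$. The remaining generic stratum $r=k-1$ with full real span would then be reached by analytic continuation within the irreducible complex variety of linearly dependent tuples, using that its lower-rank strata lie in the closure of the generic stratum.
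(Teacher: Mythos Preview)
Your first step—handling the case where all $z_j$ lie in $E_\C$ for some real $k$-plane $E$ via Lemma~\ref{lemma:interpretation_fourier_maxdegree} and the Gram-determinant factor—matches the paper exactly and is correct. After that, however, your strategy becomes unnecessarily complicated and develops genuine gaps.

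The secondary induction you invoke (``reduces the problem to a statement in one fewer variable that is accessible to the inductive hypothesis on $k$'') does not make sense: $k$ is the homogeneity degree of the fixed valuation $\mu$, so $\GW(\mu)$ lives on $(\R^n)^k$ and $\mathcal{F}(\GW(\mu))$ is intrinsically a function of $k$ arguments. There is no $(k-1)$-argument version of the same $\mu$ to fall back on, and knowing that a sub-tuple is dependent does not by itself force the $k$-variable transform to vanish. Moreover, your final analytic-continuation step runs in the wrong direction: knowing that $\mathcal{F}(\GW(\mu))$ vanishes on the lower-rank strata (a \emph{closed} subvariety of the dependency locus) does not let you propagate vanishing outward to the open generic stratum $r=k-1$; continuity and the identity principle only push vanishing from dense/open sets to their closures, not the reverse.

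The paper bypasses all of this with a single clean use of the identity theorem. Fix scalars $a_1,\dots,a_{k-1}\in\C$ and set
\[
H(z_1,\dots,z_{k-1}):=\mathcal{F}(\GW(\mu))\Bigl[z_1,\dots,z_{k-1},\sum_{j=1}^{k-1}a_j z_j\Bigr],
\]
a holomorphic function on $(\C^n)^{k-1}$. If $z_1,\dots,z_{k-1}\in\R^n$, their real span has dimension at most $k-1$, hence lies in some $E\in\Gr_k(\R^n)$; then all $k$ arguments (including the complex combination $\sum a_j z_j$) lie in $E_\C$ and are dependent, so your first step gives $H=0$ on the totally real subspace $(\R^n)^{k-1}$. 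Hence $H\equiv 0$ on $(\C^n)^{k-1}$. Varying $a_1,\dots,a_{k-1}$ and using symmetry covers every dependent tuple. This parametrize-then-analytically-continue trick is precisely the idea your induction scheme is missing.
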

		\begin{proof}
			If $z_1,\dots,z_k\in E_\C$ for some real subspace $E\in\Gr_k(\R^n)$, then the claim follows from Lemma~\ref{lemma:interpretation_fourier_maxdegree}. For the general case, fix $a_1,\dots,a_{k-1}\in \C$ and consider the holomorphic function $H$ on $(\C^n)^{k-1}$ given by
			\begin{align*}
			H(z_1,\dots,z_{k-1})=\mathcal{F}(\GW(\mu))\left[z_1,\dots,z_{k-1},\sum_{j=1}^{k-1}a_jz_j\right].
			\end{align*}
			Then $H$ vanishes on the totally real subspace $(\R^n)^{k-1}\subset(\C^n)^{k-1}$ and thus has to vanish identically on $(\C^n)^{k-1}$. As this holds for any choice of $a_1,\dots,a_{k-1}\in\C$ and $\mathcal{F}(\GW(\mu))$ is symmetric, we deduce that $\mathcal{F}(\GW(\mu))$ vanishes on all $(z_1,\dots,z_k)\in\C^n$ that are linearly dependent.
		\end{proof}

\section{Measure-valued valuations on convex functions}
\subsection{Homogeneous decomposition}
\label{section:homogeneousDecomposition}
\begin{proposition}
	Let $\Psi\in\VConv(U,\mathcal{M}(U))$ and $\Psi=\sum_{k=0}^{n}\Psi_k$ its homogeneous decomposition in $\VConv(U,\mathcal{M}(U))$.
	\begin{enumerate}
		\item If $\Psi$ is locally determined, then so is $\Psi_k$ for $0\le k\le n$.
		\item If $\Psi$ is translation equivariant, then  so is $\Psi_k$ for $0\le k\le n$.
	\end{enumerate}
\end{proposition}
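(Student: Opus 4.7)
The plan is to exploit the explicit formula for the homogeneous components produced in the proof of Theorem~\ref{theorem:homogeneousDecompVConv}. There, each component $\Psi_k$ is constructed as a linear combination
\begin{align*}
	\Psi_k=\sum_{j=1}^{n+1}c_{kj}\Psi(j\,\cdot),
\end{align*}
where $\Psi(j\,\cdot)$ denotes the valuation $f\mapsto\Psi(jf)$ and the coefficients $c_{kj}\in\R$ are obtained by inverting the Vandermonde matrix associated to the nodes $1,2,\dots,n+1$. Both properties under consideration are linear in $\Psi$ and are manifestly stable under linear combinations of measures in $\mathcal{M}(U)$. Thus everything reduces to checking that each property is stable under the operation $\Psi\mapsto\Psi(j\,\cdot)$ for $j>0$.

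For the locally determined property, suppose $f,h\in\Conv(U,\R)$ agree on an open subset $V\subset U$. Then $jf\equiv jh$ on $V$ for every $j>0$, so applying the locally determined property of $\Psi$ to the pair $(jf,jh)$ yields $\Psi(jf)[B\cap V]=\Psi(jh)[B\cap V]$ for every relatively compact Borel set $B\subset U$. Multiplying by $c_{kj}$ and summing over $j$ gives $\Psi_k(f)[B\cap V]=\Psi_k(h)[B\cap V]$, which is the first assertion.

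For the translation equivariance statement (where $U=\R^n$), note that for any $f\in\Conv(\R^n,\R)$, $x\in\R^n$ and $j\ge 0$ we have the pointwise identity $(jf)(\cdot+x)=j\cdot f(\cdot+x)$. Applying the translation equivariance of $\Psi$ to the function $jf$, we obtain $\Psi\bigl((jf)(\cdot+x)\bigr)[B]=\Psi(jf)[B+x]$ for every bounded Borel set $B\subset\R^n$. Multiplying by $c_{kj}$ and summing gives $\Psi_k(f(\cdot+x))[B]=\Psi_k(f)[B+x]$, establishing the second assertion.

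There is no real obstacle here beyond verifying that the Vandermonde-based formula for the homogeneous components, which is the content of the proof of Theorem~\ref{theorem:homogeneousDecompVConv}, does the job; once it is in hand, both properties propagate automatically through the two elementary operations (precomposition with a positive dilation and linear combination in $\mathcal{M}(U)$) to which the formula reduces $\Psi_k$.
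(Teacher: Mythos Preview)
Your proof is correct and follows essentially the same approach as the paper: both arguments express $\Psi_k$ via the Vandermonde inversion formula $\Psi_k=\sum_{j=1}^{n+1}c_{kj}\Psi(j\,\cdot)$ and then observe that both properties are preserved under dilation of the argument and under linear combinations. You have in fact spelled out the verification in more detail than the paper, which simply asserts that the conclusion is ``easy to see'' from this representation.
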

\begin{proof}
	Note that the map $t\mapsto \Psi(tf)$ is a polynomial of degree at most $n$ with values in $\mathcal{M}(U)$ as $\Psi(tf)=\sum_{k=0}^{n}t^k\Psi_k(f)$. Using the inverse of the Vandermonde matrix, we obtain $c_{kj}\in\R$ such that
	\begin{align*}
 	\Psi_k(f)=\sum_{j=1}^{n+1}c_{kj}\Psi(jf).
 	\end{align*}
	Using this representation, it is easy to see that if $\Psi$ is either locally determined or translation equivariant, then so is $\Psi_k$ for $0\le k\le n$.
\end{proof}

Recall that $\MAVal(\R^n)$ denotes the subspace of $\VConv(\R^n,\mathcal{M}(\R^n))$ of all elements that are  locally determined and  translation equivariant.
\begin{corollary}
	$\MAVal(\R^n)=\bigoplus\limits_{k=0}^n\MAVal_k(\R^n)$
\end{corollary}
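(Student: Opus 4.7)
The plan is to combine the general homogeneous decomposition from Theorem~\ref{theorem:homogeneousDecompVConv} with the preceding proposition, which states that the properties of being locally determined and translation equivariant are both inherited by each homogeneous component. The corollary is essentially a formal consequence of these two facts.

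First, I would observe that the inclusion $\MAVal(\R^n) \subset \VConv(\R^n, \mathcal{M}(\R^n))$ follows directly from the definitions: dually epi-translation invariance, continuity, and values in $\mathcal{M}(\R^n)$ are built into the definition of $\MAVal(\R^n)$. Applying Theorem~\ref{theorem:homogeneousDecompVConv} with $F = \mathcal{M}(\R^n)$, any $\Psi \in \MAVal(\R^n)$ decomposes uniquely as
\begin{align*}
\Psi = \sum_{k=0}^n \Psi_k, \qquad \Psi_k \in \VConv_k(\R^n, \mathcal{M}(\R^n)).
\end{align*}

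Next I would invoke the proposition just proved above: since $\Psi$ is locally determined and translation equivariant, each component $\Psi_k$ shares these two properties. Combined with the fact that $\Psi_k$ is already $k$-homogeneous, continuous, and dually epi-translation invariant by virtue of lying in $\VConv_k(\R^n, \mathcal{M}(\R^n))$, we conclude $\Psi_k \in \MAVal_k(\R^n)$. This shows $\MAVal(\R^n) = \sum_{k=0}^n \MAVal_k(\R^n)$.

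Finally, the directness of the sum is inherited from Theorem~\ref{theorem:homogeneousDecompVConv}: if $\sum_{k=0}^n \Psi_k = 0$ with $\Psi_k \in \MAVal_k(\R^n) \subset \VConv_k(\R^n, \mathcal{M}(\R^n))$, then the uniqueness clause in the homogeneous decomposition of $\VConv(\R^n, \mathcal{M}(\R^n))$ forces $\Psi_k = 0$ for every $k$. There is no real obstacle here: the entire argument is bookkeeping, and the only substantive input beyond Theorem~\ref{theorem:homogeneousDecompVConv} is the inheritance of the two defining properties under the Vandermonde-type formula
\begin{align*}
\Psi_k(f) = \sum_{j=1}^{n+1} c_{kj}\, \Psi(jf),
\end{align*}
which was already recorded in the preceding proof.
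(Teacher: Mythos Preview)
Your proposal is correct and follows exactly the argument the paper intends: the corollary is stated in the paper without proof, as an immediate consequence of the preceding proposition together with Theorem~\ref{theorem:homogeneousDecompVConv}, and your write-up simply makes this explicit.
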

\begin{lemma}
	\label{lemma:0homMA}
		$\MAVal_0(\R^n)$ is one dimensional and spanned by the Lebesgue measure.
\end{lemma}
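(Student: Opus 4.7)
The plan is to show that $0$-homogeneity forces $\Psi$ to be the constant valuation, and then use translation equivariance to identify the constant with a translation-invariant Radon measure, which by a classical result must be a multiple of Lebesgue measure.

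First I would show that any $\Psi\in\MAVal_0(\R^n)$ is constant, i.e.\ $\Psi(f)=\Psi(0)$ for every $f\in\Conv(\R^n,\R)$. By definition of $0$-homogeneity we have $\Psi(tf)=\Psi(f)$ for all $t>0$, and since $tf\to 0$ in $\Conv(\R^n,\R)$ as $t\to 0^+$ (locally uniform convergence), continuity of $\Psi$ with values in $\mathcal{M}(\R^n)$ equipped with the weak*-topology yields $\Psi(0)=\Psi(f)$. Thus $\Psi\equiv\mu$ for some $\mu\in\mathcal{M}(\R^n)$.

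Next I would apply translation equivariance. For any $x\in\R^n$ and any bounded Borel set $B\subset\R^n$, constancy gives $\Psi(f(\cdot+x))=\Psi(f)=\mu$, so the equivariance condition reads $\mu[B]=\mu[B+x]$. Hence $\mu$ is a translation-invariant complex Radon measure on $\R^n$.

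The only remaining point, which I view as the main substantive step, is to check that every translation-invariant complex Radon measure on $\R^n$ is a complex scalar multiple of $\vol_n$. Writing $\mu=\mu_r+i\mu_i$ with signed Radon measures $\mu_r,\mu_i$, uniqueness of the Jordan decomposition shows that each of the four positive parts $\mu_r^\pm,\mu_i^\pm$ is itself translation invariant. By Haar's theorem for locally compact groups, each such positive Radon measure is a non-negative multiple of Lebesgue measure; since within each Jordan pair the two parts must be mutually singular, one of them vanishes. Therefore $\mu=c\,\vol_n$ for some $c\in\C$. Finally I would verify, by inspection, that the constant valuation $f\mapsto\vol_n$ actually lies in $\MAVal_0(\R^n)$ (continuity, dual epi-translation invariance, the valuation identity, local determination, translation equivariance and $0$-homogeneity are all immediate from constancy and translation invariance of $\vol_n$), which establishes that $\MAVal_0(\R^n)=\C\cdot\vol_n$.
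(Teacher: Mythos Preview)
Your proof is correct and follows essentially the same approach as the paper's. The paper's argument is much terser---it simply asserts that $0$-homogeneity forces $\Psi$ to be constant and that a translation-invariant Radon measure is a multiple of the Lebesgue measure---whereas you spell out both steps (the continuity argument for constancy, and the Jordan decomposition together with Haar uniqueness for the classification of translation-invariant complex Radon measures) and additionally verify that $f\mapsto\vol_n$ indeed lies in $\MAVal_0(\R^n)$.
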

\begin{proof}
	As any $\Psi\in\MAVal_0(\R^n)$ is $0$-homogeneous, it is constant, that is, $\Psi(f)=\Psi(0)$ for all $f\in\Conv(\R^n,\R)$. As $\Psi$ is translation equivariant, $\Psi(0)$ is a translation invariant Radon measure on $\R^n$, so it is a multiple of the Lebesgue measure.
\end{proof}

\subsection{Dually simple and locally determined measure-valued valuations}
	Our proof of Theorem~\ref{maintheorem:characRealMA} is based on the observation by Colesanti, Ludwig and Mussnig that valuations on $\Conv(\R^n,\R)$ are completely determined by their restriction to translates of support functions. 
	\begin{lemma}
		\label{lemma:valuations-on-conv:characteristic-function-injective}
		Let $(G,+)$ be an Abelian semi-group with cancellation law that carries a Hausdorff topology, and $\mu_1,\mu_2:\Conv(\R^n,\R)\rightarrow G$ two continuous valuations. If $\mu_1(h_K(\cdot-x)+c)=\mu_2(h_K(\cdot-x)+c)$ for all $K\in\mathcal{K}(\R^n)$, $x\in \R^n$ and $c\in\R$, then $\mu_1\equiv \mu_2$ on $\Conv(\R^n,\R)$.
	\end{lemma}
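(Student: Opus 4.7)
My plan is a density reduction to piecewise affine convex functions via the valuation identity and continuity, followed by an inductive argument on their combinatorial complexity.

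Every affine function $\ell(y) = \langle v, y\rangle + c$ coincides with $h_{\{v\}}(y) + c$, so the hypothesis immediately gives $\mu_1(\ell) = \mu_2(\ell)$; more generally, every support function $h_K$ with $K \in \mathcal{K}(\R^n)$ lies in the class (take $x=0$), so $\mu_1$ and $\mu_2$ already agree on this very rich subfamily. Since any $f \in \Conv(\R^n,\R)$ is the pointwise supremum of its affine minorants, taking finite increasing maxima and invoking Dini's theorem yields a sequence $f_N = \max_{i=1}^N \ell_i$ of piecewise affine convex functions converging locally uniformly to $f$. Continuity of $\mu_1, \mu_2$ together with the cancellation law in $G$ then reduce the problem to proving $\mu_1(f) = \mu_2(f)$ on every piecewise affine convex $f = \max_{i=1}^k \ell_i$.

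For the piecewise affine case, I would set up an induction on the following complexity: the function $f$ corresponds dually to the upper convex hull of the finite point set $\{(v_i, c_i)\}\subset\R^n\times\R$, and $f$ already equals some $h_K(\cdot-x)+c$ precisely when all active points $(v_i, c_i)$ lie on a single affine hyperplane of $\R^n\times\R$. In the inductive step the aim is to produce a pair of piecewise affine convex functions $f_1, f_2$ with $f_1 \vee f_2 = f$, with $f_1 \wedge f_2$ also convex, and such that the dual Newton data of each of $f_1, f_2, f_1\wedge f_2$ is strictly closer to being coplanar than that of $f$. The valuation identity $\mu(f)+\mu(f_1\wedge f_2) = \mu(f_1)+\mu(f_2)$ applied to both $\mu_1$ and $\mu_2$ then closes the induction, terminating at the base case of functions of the form $h_K(\cdot-x)+c$.

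The main obstacle is precisely this combinatorial construction of the inductive splitting $(f_1, f_2)$ in arbitrary dimension $n$: simultaneously ensuring that $f_1 \wedge f_2$ is convex (equivalently, that it corresponds to a valid upper convex hull in the dual picture) and that a suitably chosen complexity measure strictly decreases. The naive attempt $f_1 = \max_{i<k}\ell_i, f_2 = \ell_k$ fails because $f_1 \wedge f_2$ need not be convex, so a more delicate partition, guided by the face structure of the upper hull of $\{(v_i, c_i)\}$, will be required. Once this geometric book-keeping is carried out, continuity of $\mu_1, \mu_2$ and cancellation in $G$ propagate the equality from piecewise affine functions back to all of $\Conv(\R^n, \R)$ via the density step.
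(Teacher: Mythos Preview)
Your overall strategy---density of piecewise affine convex functions together with an inductive reduction via the valuation identity to cone functions $h_K(\cdot-x)+c$---is the correct one, and it is the approach behind the reference the paper invokes (the paper gives no independent argument and simply cites Mussnig, Lemma~5.1, noting that the proof there only uses the hypothesis stated here). The density step via supporting affine functions and Dini is fine, and the reduction to piecewise affine $f=\bigvee_{i=1}^{m}\ell_i$ is standard.

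That said, what you have written is an outline rather than a proof. You explicitly flag the inductive splitting as ``the main obstacle'' and leave it unresolved, and this is exactly where all the content of the lemma lies. Saying that ``a more delicate partition, guided by the face structure of the upper hull of $\{(v_i,c_i)\}$, will be required'' describes the difficulty without resolving it. To close the argument you must actually produce, for an arbitrary $f=\bigvee_{i=1}^{m}\ell_i$ with $m\ge 2$, functions $f_1,f_2$ with $f=f_1\vee f_2$, $f_1\wedge f_2$ convex, and a well-defined complexity strictly smaller for each of $f_1,f_2,f_1\wedge f_2$, and then verify that the induction terminates at functions of the form $h_K(\cdot-x)+c$. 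The standard way to achieve this is not to split off a single affine piece $\ell_m$ (which, as you note, fails) but to replace it by a cone function $h_{\partial f(x_0)}(\cdot-x_0)+f(x_0)$ built from the subdifferential at a suitable boundary point $x_0$ of the region $\{f=\ell_m\}$; this is what forces the minimum to be convex and makes the induction go through. Until such a construction is supplied and its properties checked, the proof has a genuine gap at its crucial step.
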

	\begin{proof}
		This is a slightly weaker form of \cite{MussnigVolumepolarvolume2019} Lemma~5.1. To be precise, the version in \cite{MussnigVolumepolarvolume2019} considers translation invariant valuations, however, the proof only uses the weaker property stated above.
	\end{proof}
	Let $F$ be a locally convex vector space. For $\mu\in \VConv(\R^n,F)$ and $x\in\R^n$, define
	\begin{align*}
		S(\mu)[\cdot,x]:\mathcal{K}(\R^n)&\rightarrow F\\
		K&\mapsto S(\mu)[K,x]=\mu(h_K(\cdot-x)).
	\end{align*} 
	Let $S(\mu)$ denote the function $x\mapsto S(\mu)[\cdot,x]$ 
\begin{theorem}
	\label{theorem_embedding_Cont}
	 $S:\VConv(\R^n,F)\rightarrow C(\R^n,\Val(\R^n,F))$ is well defined and injective.
\end{theorem}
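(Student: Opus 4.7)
The plan is to verify in sequence: (a) that for each $\mu\in\VConv(\R^n,F)$ and each $x\in\R^n$ the map $K\mapsto S(\mu)[K,x]$ is a translation-invariant, continuous valuation; (b) that $x\mapsto S(\mu)[\cdot,x]$ is continuous into $\Val(\R^n,F)$ equipped with the compact-open topology; and (c) that $S$ is injective, which will be an immediate consequence of Lemma~\ref{lemma:valuations-on-conv:characteristic-function-injective}.

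Part (a) transfers structure directly through the support function. For translation invariance, $h_{K+y}(\cdot-x)=h_K(\cdot-x)+\langle y,\cdot-x\rangle$ differs from $h_K(\cdot-x)$ by an affine function on $\R^n$, and dual epi-translation invariance of $\mu$ then gives $S(\mu)[K+y,x]=S(\mu)[K,x]$. For the valuation property, whenever $K\cup L\in\mathcal{K}(\R^n)$ one has $h_{K\cup L}(\cdot-x)=h_K(\cdot-x)\vee h_L(\cdot-x)$ and $h_{K\cap L}(\cdot-x)=h_K(\cdot-x)\wedge h_L(\cdot-x)$, so applying the valuation identity of $\mu$ to these four functions yields the valuation identity for $S(\mu)[\cdot,x]$.

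For (b), I would first establish that the map
\begin{align*}
\Phi:\mathcal{K}(\R^n)\times\R^n\to \Conv(\R^n,\R),\qquad (K,x)\mapsto h_K(\cdot-x),
\end{align*}
is jointly continuous, by splitting $h_{K_j}(y-x_j)-h_K(y-x)=[h_{K_j}-h_K](y-x_j)+[h_K(y-x_j)-h_K(y-x)]$ and bounding each summand uniformly for $y$ in any prescribed compact set (using uniform convergence of $h_{K_j}\to h_K$ on compact sets together with uniform continuity of $h_K$ on compact sets). Consequently $\mu\circ\Phi:\mathcal{K}(\R^n)\times\R^n\to F$ is continuous, which in particular gives continuity of $S(\mu)[\cdot,x]$ on $\mathcal{K}(\R^n)$ after fixing $x$. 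For the continuity of $x\mapsto S(\mu)[\cdot,x]$ into $\Val(\R^n,F)$, fix a convex body $B$ with non-empty interior; the set $\tilde{B}\subset\mathcal{K}(\R^n)$ is compact by the Blaschke selection theorem, so the joint continuity of $\mu\circ\Phi$ on $\tilde{B}\times\R^n$ yields, for each basis set $\mathcal{M}(\tilde{B},O)$ of the compact-open topology and each $x_0$ with $S(\mu)[\cdot,x_0]\in\mathcal{M}(\tilde{B},O)$, a neighbourhood of $x_0$ sent entirely into $\mathcal{M}(\tilde{B},O)$.

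For (c), suppose $S(\mu_1)=S(\mu_2)$, so that $\mu_1(h_K(\cdot-x))=\mu_2(h_K(\cdot-x))$ for every $K\in\mathcal{K}(\R^n)$ and $x\in\R^n$. Since constants are affine functions and both $\mu_i$ are dually epi-translation invariant, this equality extends to $\mu_1(h_K(\cdot-x)+c)=\mu_2(h_K(\cdot-x)+c)$ for all $c\in\R$, and Lemma~\ref{lemma:valuations-on-conv:characteristic-function-injective} applies because $F$, being a Hausdorff locally convex vector space, is a Hausdorff Abelian group with cancellation. I expect the main obstacle to be simply spelling out the joint continuity of $\Phi$ together with the matching of topologies on $\Val(\R^n,F)$ in (b); every other step is a direct transfer of a property of $\mu$ across the support function dictionary.
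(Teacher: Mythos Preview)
Your proposal is correct and follows essentially the same approach as the paper: both verify that $S(\mu)[\cdot,x]\in\Val(\R^n,F)$ via the support-function dictionary, establish joint continuity of $(K,x)\mapsto h_K(\cdot-x)$ by the same triangle-inequality splitting, and deduce injectivity from Lemma~\ref{lemma:valuations-on-conv:characteristic-function-injective}. The only cosmetic difference is that the paper phrases the continuity step as uniform continuity of $\mu\circ\Phi$ on compact sets (yielding $S(\mu)[x]-S(\mu)[x']\in\mathcal{M}(\tilde{B},O)$ for $|x-x'|$ small), whereas you invoke a tube-lemma argument directly on the preimage of $O$; both are equivalent uses of the compactness of $\tilde{B}$.
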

\begin{proof}
	The properties of the support function (compare Section \ref{section:PreliminariesVal}) imply that $S(\mu)[x]\in\Val(\R^n,F)$ for all $x\in U$. To see that $S$ is injective, assume that $S(\mu)=0$. As $\mu$ is invariant under the addition of constants, this implies $\mu(h_K(\cdot-x)+c)=0$ for all $K\in\mathcal{K}(\R^n)$ and all $x\in \R^n$, $c\in\R$, so Lemma~\ref{lemma:valuations-on-conv:characteristic-function-injective} implies $\mu\equiv0$ on $\Conv(\R^n,\R)$.\\
	Now let $B\subset \R^n$ be a convex body with non-empty interior and set $\tilde{B}:=\{K\in\mathcal{K}(\R^n):K\subset B\}$. Recall that a basis for the topology of $\Val(\R^n,F)$ is given by the open subsets
	\begin{align*}
	\mathcal{M}(\tilde{B},O)=\{\mu\in\Val(\R^n,F): \mu(K)\in O \ \forall K\in\tilde{B}\},
	\end{align*}
	where $O\subset F$ is open. Let us show that the map
	\begin{align*}
	R:\mathcal{K}(\R^n)\times\R^n&\rightarrow \Conv(\R^n,\R)\\
	(K,x)&\mapsto h_K(\cdot-x)
	\end{align*}
	is continuous. Assume that  $(K_j,x_j)$ converges to $(K,x)\in\mathcal{K}(\R^n)\times\R^n$. Then
	\begin{align*}
	&\left|h_{K_j}(\cdot-x_j)-h_K(\cdot-x)\right|\le \left|h_{K_j}(\cdot-x_j)-h_{K}(\cdot-x_j)\right|+\left|h_{K}(\cdot-x_j)-h_{K}(\cdot-x)\right|.
	\end{align*}
	As the sequence of functions $(h_{K_j})_j$ converges uniformly on the compact subset $\{x_j: j\in\mathbb{N}\}\cup \{x\}$, the right hand side converges pointwise to $0$. Thus the sequence $(h_{K_j}(\cdot-x_j))_j$ converges to $h_K(\cdot-x)$ in $\Conv(\R^n,\R)$.\\
	We deduce that $(K,x)\mapsto S(\mu)[x](K)=[\mu\circ R](K,x)$ is uniformly continuous on compact subsets. In particular, given an open neighborhood $O\subset F$ of the origin, we can find $\delta>0$ such that
	\begin{align*}
	\mu(h_K(\cdot-x))-\mu(h_K(\cdot-x'))\in O \quad \text{for all}~ K\subset B,~ x,x'\in \R^n\text{ with } |x-x'|<\delta,
	\end{align*} 
	that is, 
	\begin{align*}
	S(\mu)[x]-S(\mu)[x']\in \mathcal{M}(\tilde{B},O)\quad \text{for all }x,x'\in \R^n \text{ with }|x-x'|<\delta.
	\end{align*}
	Thus $S(\mu)$ is continuous.
\end{proof}
\begin{corollary}
	\label{cor:characteristicFunctionMaxDeg}
	If $\mu\in\VConv_n(\R^n,F)$, then $S(\mu)[K,x]=\frac{1}{\omega_n}S(\mu)[B_1(0),x]\vol_n(K)$ for all $K\in\mathcal{K}(\R^n)$, $x\in\R^n$.
\end{corollary}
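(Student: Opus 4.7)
The plan is to combine the embedding of Theorem~\ref{theorem_embedding_Cont} with the classical fact that the space of complex-valued continuous, translation invariant, $n$-homogeneous valuations on $\mathcal{K}(\R^n)$ is one-dimensional and spanned by $\vol_n$. Fix $\mu\in\VConv_n(\R^n,F)$ and $x\in\R^n$. By Theorem~\ref{theorem_embedding_Cont}, the map $K\mapsto S(\mu)[K,x]$ lies in $\Val(\R^n,F)$; in particular it is continuous and translation invariant on $\mathcal{K}(\R^n)$.

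Next I would verify that this valuation is $n$-homogeneous. Since $h_{tK}(y-x)=t\,h_K(y-x)$ for all $t\ge 0$ by the positive homogeneity of the support function, the $n$-homogeneity of $\mu$ yields
\begin{align*}
S(\mu)[tK,x]=\mu(h_{tK}(\cdot-x))=\mu(t\,h_K(\cdot-x))=t^n\mu(h_K(\cdot-x))=t^n S(\mu)[K,x].
\end{align*}
Hence $S(\mu)[\cdot,x]\in\Val_n(\R^n,F)$.

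To pass from this vector-valued statement to the scalar classification, I would test with continuous linear functionals. For every $\lambda\in F'$, the composition $\lambda\circ S(\mu)[\cdot,x]$ belongs to $\Val_n(\R^n,\C)$, which by McMullen's theorem (or Hadwiger's theorem) is one-dimensional and spanned by $\vol_n$. Therefore there exists $c_\lambda\in\C$ such that $\lambda(S(\mu)[K,x])=c_\lambda\vol_n(K)$ for every $K\in\mathcal{K}(\R^n)$. Evaluating at $K=B_1(0)$ gives $c_\lambda=\omega_n^{-1}\lambda(S(\mu)[B_1(0),x])$, so that
\begin{align*}
\lambda\!\left(S(\mu)[K,x]-\tfrac{\vol_n(K)}{\omega_n}S(\mu)[B_1(0),x]\right)=0\quad\text{for all }\lambda\in F'.
\end{align*}
Since $F'$ separates points in $F$, the element inside the parentheses vanishes, which is precisely the claimed identity. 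No real obstacle is expected here; the whole argument is essentially packaging of the preceding embedding with the scalar Hadwiger/McMullen classification, extended to $F$-valued valuations via the Hahn-Banach separation property of $F'$.
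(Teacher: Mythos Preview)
Your proof is correct and follows essentially the same approach as the paper: compose with $\lambda\in F'$ to reduce to $\Val_n(\R^n)$, invoke Hadwiger's theorem to get a multiple of $\vol_n$, evaluate at $B_1(0)$ to identify the constant, and conclude using that $F'$ separates points. The only difference is that you spell out the $n$-homogeneity verification, which the paper leaves implicit.
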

\begin{proof}
	If $\lambda\in F'$, then $K\mapsto\lambda(S(\mu)[K,x])$ belongs to $\Val_n(\R^n)$ and is thus a constant multiple of the Lebesgue measure due to a result by Hadwiger \cite{HadwigerVorlesungenuberInhalt2013}, that is, there exists $c_\lambda(x)\in \C$ such that $\lambda(S(\mu)[K,x])=c_\lambda(x) \vol(K)$ for all $K\in \mathcal{K}(\R^n)$. For $K=B_1(0)$ we therefore obtain
	\begin{align*}
		c_\lambda(x)=\frac{1}{\omega_n}\lambda(S(\mu)[B_1(0),x]).
	\end{align*}
	Thus \begin{align*}
		\lambda\left(S(\mu)[K,x]-\frac{1}{\omega_n}S(\mu)[B_1(0),x]\vol(K)\right)=0\quad\text{for all }\lambda\in F',
	\end{align*}
	which shows the claim as $F'$ separates points in $F$.
\end{proof}
The following Lemma~is the key ingredient for the proof of Theorem~\ref{maintheorem:characRealMA}.
\begin{lemma}
	\label{lemma:supportFunctionFace}
	Let $P\subset \R^n$ be a polytope. For every $y\in \R^n\setminus\{0\}$ there a neighborhood $U$ of $y_0$ and a polytope $P'$ of dimension at most $n-1$ such that $h_{P'}=h_P$ on $U$.
\end{lemma}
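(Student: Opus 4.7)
The natural candidate for $P'$ is the face of $P$ in direction $y_0$, namely
\[
	F_0 := F(P,y_0) = \{x \in P : \langle x,y_0\rangle = h_P(y_0)\}.
\]
The plan is to show that on a small enough neighborhood of $y_0$ the support function $h_P$ is determined entirely by the vertices lying in $F_0$, so that $h_P = h_{F_0}$ there; since $y_0 \neq 0$ forces $F_0$ to be a proper face whenever $P$ has full dimension, $F_0$ will have dimension at most $n-1$.

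First I would dispose of the trivial case: if $\dim P \leq n-1$, one simply takes $P' = P$ and $U = \R^n\setminus\{0\}$. So assume $\dim P = n$. Let $V$ be the (finite) vertex set of $P$ and let $V_0 \subset V$ be the set of vertices attaining the maximum of $\langle \cdot,y_0\rangle$ on $P$; then $F_0 = \Conv(V_0)$ is a proper face of $P$, so $\dim F_0 \le n-1$, and I set $P' := F_0$.

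The core step is the neighborhood argument. Because $V$ is finite, there is a gap
\[
	\delta := \min_{v \in V\setminus V_0}\Bigl(h_P(y_0) - \langle v,y_0\rangle\Bigr) > 0.
\]
By continuity of the finitely many linear maps $y \mapsto \langle v,y\rangle$, I choose a neighborhood $U$ of $y_0$ small enough that for every $v \in V$ and every $y \in U$ we have $|\langle v,y\rangle - \langle v,y_0\rangle| < \delta/3$. Then for $y \in U$, every $v \in V\setminus V_0$ satisfies $\langle v,y\rangle < \max_{v'\in V_0}\langle v',y\rangle$, which forces
\[
	h_P(y) = \max_{v\in V}\langle v,y\rangle = \max_{v\in V_0}\langle v,y\rangle = h_{\Conv(V_0)}(y) = h_{P'}(y).
\]
This gives the required identity on $U$.

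There is no serious obstacle here, since the statement is a standard consequence of the polyhedral structure of the normal fan of $P$; the only thing to be a little careful about is making sure the neighborhood $U$ is chosen uniformly in the finitely many vertices, which is exactly what the gap $\delta$ achieves.
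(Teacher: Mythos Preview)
Your argument is correct and follows essentially the same route as the paper: both take $P'$ to be the face $F(P,y_0)=\{x\in P:\langle x,y_0\rangle=h_P(y_0)\}$, observe that it has dimension at most $n-1$ (the paper handles the degenerate case via the ``$k=m$'' split rather than your ``$\dim P\le n-1$'' split, but these amount to the same thing), and then use a continuity/gap argument on the finitely many vertices to conclude that near $y_0$ only vertices of $F(P,y_0)$ realize the maximum. Your version is just slightly more explicit about the uniform gap $\delta$; otherwise the proofs coincide.
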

\begin{proof}
	Fix $y_0\in\R^n\setminus\{0\}$. As $P$ is convex, the function $x\mapsto \langle y_0,x\rangle$ attains its maximum $h_P(y_0)$ in certain vertices of $P$. If $P$ has $m$ vertices, we may thus assume that the vertices $v_1,\dots,v_m$ of $P$ satisfy
	\begin{align*}
	h_P(y_0)=&\langle y_0,v_i\rangle \quad \text{for }1\le i\le k,\\
	h_P(y_0)>&\langle y_0,v_i\rangle \quad \text{for }k+1\le i\le m,
	\end{align*}
	for some $1\le k\le m$.
	Note that this implies that $v_1,\dots,v_k$ belong to the face $P':=\{x\in P: \langle x,y_0\rangle =h_P(y_0)\}$, which is of dimension at most $n-1$. If $k=m$, this shows that claim, as $P'=P$ in this case. Thus assume $1\le k <m$.\\
	As $h_P$ is continuous, we may choose a neighborhood $U\subset \R^n\setminus\{0\}$ of $y_0$ such that for all $y\in U$
	\begin{align*}
	h_P(y)>\langle y,v_i\rangle  \quad \text{for }k+1\le i\le m.
	\end{align*}
	For $y\in U$, the function $x\mapsto \langle y,x\rangle$ attains its maximum $h_P(y)$ on a vertex of $P$, so we find $1\le i\le k$ such that $h_P(y)=\langle v_i,y\rangle$. In particular, $h_P(y)\le h_{P'}(y)$ for all $y\in U$, which implies $h_{P'}(y)=h_P(y)$ for all $y\in U$ as $P'\subset P$.
\end{proof}

Recall that, by Theorem~\ref{theorem:simpleValuations}, $\Psi\in \VConv(U,\mathcal{M}(U))$ is dually simple if and only if it is $n$-homogenous. Theorem~\ref{maintheorem:characRealMA} thus follows from the following result. 
\begin{theorem}
	\label{theorem:characteristicFunction}
	Let $\Psi\in\VConv_n(U,\mathcal{M}(U))$ be locally determined. 
	Then there exists a unique function $\psi\in C(U)$ such that for $f\in\Conv(U,\R)$
	\begin{align}
		\label{eq:representationMeasureMaxDegree}
		\Psi(f)[B]=\int_{B}\psi(x)d\MA(f,x)\quad\text{for all relatively compact Borel sets}~B\subset U.
	\end{align}
	Moreover, $\psi$ is given by
	\begin{align*}
		\psi(x)=\frac{1}{\omega_n}S(\Psi)[B_1(0),x](\{x\})\quad\text{for}~ x\in U.
	\end{align*}
\end{theorem}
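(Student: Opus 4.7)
The plan is to identify $\Psi$ by first pinning down its values on translates of support functions and then invoking Lemma~\ref{lemma:valuations-on-conv:characteristic-function-injective} to conclude.

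First I would treat the polytopal case. Fix a polytope $P \subset \R^n$ and $x \in U$; I claim $\Psi(h_P(\cdot - x)|_U) = c_P(x) \delta_x$ for some $c_P(x) \in \C$. For any $y_0 \in \R^n \setminus \{0\}$, Lemma~\ref{lemma:supportFunctionFace} supplies a neighborhood $V$ of $y_0$ and a polytope $P'$ of dimension at most $n-1$ with $h_P \equiv h_{P'}$ on $V$, so $h_P(\cdot - x)$ and $h_{P'}(\cdot - x)$ agree on $(V + x) \cap U$. Since $\Psi$ is locally determined, the two measures coincide on relatively compact Borel subsets of $(V + x) \cap U$. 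Writing $P' \subset v + E_0$ for a proper subspace $E_0 \subset \R^n$ and $v \in \R^n$, we get $h_{P'}(\cdot - x) = \pi_{E_0}^* g + \ell$ for some $g \in \Conv(E_0, \R)$ and some affine $\ell$; combining dual epi-translation invariance with the fact that $\Psi$ is dually simple by Theorem~\ref{theorem:simpleValuations} (as it is $n$-homogeneous) gives $\Psi(h_{P'}(\cdot - x)|_U) = 0$. Hence $\Psi(h_P(\cdot - x)|_U)$ vanishes on an open neighborhood of $x + y_0$ in $U$; as $y_0 \neq 0$ is arbitrary, its support lies in $\{x\}$, proving the claim.

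Next, Corollary~\ref{cor:characteristicFunctionMaxDeg} applied to $\res^* \Psi \in \VConv_n(\R^n, \mathcal{M}(U))$ yields
$$\Psi(h_K(\cdot - x)|_U) = \frac{\vol_n(K)}{\omega_n}\, \Psi(h_{B_1(0)}(\cdot - x)|_U)$$
for every $K \in \mathcal{K}(\R^n)$ and $x \in U$. Specialising to an $n$-dimensional polytope $P$ gives $\Psi(h_{B_1(0)}(\cdot - x)|_U) = \omega_n \psi(x) \delta_x$ with $\psi(x) := c_P(x)/\vol_n(P)$ independent of $P$, matching the proposed formula for $\psi(x)$, and therefore $\Psi(h_K(\cdot - x)|_U) = \vol_n(K) \psi(x) \delta_x$ for every $K$. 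Continuity of $\psi$ on $U$ follows from weak-$*$ continuity of $\Psi$: if $x_j \to x$ in $U$ and $\phi \in C_c(U)$ equals $1$ on a joint neighborhood of $x$ and of the tail $(x_j)$, then $\omega_n \psi(x_j) = \int \phi\, d\Psi(h_{B_1(0)}(\cdot - x_j)|_U) \to \omega_n \psi(x)$.

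Finally, I would set $\tilde \Psi(f)[B] := \int_B \psi\, d\MA(f)$ and observe that $\tilde \Psi \in \VConv_n(U, \mathcal{M}(U))$. Using $\MA(h_K(\cdot - x)|_U) = \vol_n(K) \delta_x$ for $x \in U$ (and $\MA(h_K(\cdot - x)|_U) = 0$ for $x \notin U$, since then every subdifferential is contained in $\partial K$), $\tilde\Psi$ agrees with $\Psi$ on every translate of every support function. Dual epi-translation invariance extends this to functions of the form $h_K(\cdot - x) + c$. Lifting both valuations through $\res^*$ to $\VConv(\R^n, \mathcal{M}(U))$ and applying Lemma~\ref{lemma:valuations-on-conv:characteristic-function-injective} with $G = \mathcal{M}(U)$ shows $\res^* \Psi = \res^* \tilde \Psi$, and the injectivity of $\res^*$ from Lemma~\ref{lemma:InjectivityRestriction} forces $\Psi = \tilde \Psi$. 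Uniqueness of $\psi$ is immediate from inserting $f = h_{B_1(0)}(\cdot - x)|_U$ into the integral formula. The main obstacle is the first step: combining Lemma~\ref{lemma:supportFunctionFace} with local determinateness and dual simplicity to force $\Psi(h_P(\cdot - x)|_U)$ to be supported at the single point $\{x\}$; once this is in place, the rest reduces to routine applications of results already established.
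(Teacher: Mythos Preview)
Your proposal is correct and follows essentially the same route as the paper: use Lemma~\ref{lemma:supportFunctionFace} together with local determinateness and dual simplicity to force $\Psi(h_P(\cdot-x)|_U)$ to be supported at $x$, then apply Corollary~\ref{cor:characteristicFunctionMaxDeg} and finish via Lemma~\ref{lemma:valuations-on-conv:characteristic-function-injective} (the paper packages this last step through the map $S$ of Theorem~\ref{theorem_embedding_Cont}, but that is just Lemma~\ref{lemma:valuations-on-conv:characteristic-function-injective} in disguise). One small point to make explicit: Lemma~\ref{lemma:valuations-on-conv:characteristic-function-injective} requires agreement on $h_K(\cdot-x)+c$ for \emph{all} $x\in\R^n$, not only $x\in U$; your step-1 argument works verbatim for $x\notin U$ and shows $\Psi(h_P(\cdot-x)|_U)=0$ (the support lies in $\{x\}\cap U=\emptyset$), which then matches $\tilde\Psi$ by your parenthetical remark on $\MA(h_K(\cdot-x)|_U)$.
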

\begin{proof}
	Given $\psi\in C(U)$, the right hand side of \eqref{eq:representationMeasureMaxDegree} defines an element $\Psi_\psi\in\VConv_n(U,\mathcal{M}(U))$. As $\MA(h_K(\cdot-x))=\vol_n(K)\delta_x $ (see, for example, \cite[Lemma~2.4]{AleskerValuationsconvexfunctions2019} for a sketch of proof), we obtain
	\begin{align*}
		\psi(x)=\frac{1}{\omega_n}S(\Psi_\psi)[B_1(0),x](\{x\})=\frac{1}{\omega_n}\Psi_\psi(h_{B_1(0)}(\cdot-x)[\{x\}]\quad\text{for}~x\in U,
	\end{align*}
	so this representation is unique.\\
	
	Now let $\Psi\in \VConv_n(U,\mathcal{M}(U))$. We will consider $\Psi$ as an element of $\VConv_n(\R^n,\mathcal{M}(U))$ using Lemma~\ref{lemma:InjectivityRestriction}.	We claim that the fact that $\Psi$ is locally determined implies that there exists $c(K,x)\in \C$ for $K\in\mathcal{K}(\R^n)$, $x\in \R^n$ such that
	\begin{align*}
	S(\Psi)[K,x]=\begin{cases}
		c(K,x)\delta_x & \text{for}~x\in U,\\
		0 & \text{else}.
	\end{cases}
	\end{align*}
	Indeed, let $\phi\in C_c(U)$ be a function such that the support of $\phi$ does not contain the point $x$. If $K=P$ is a polytope, then $h_P(\cdot-x)=h_{P'}(\cdot-x)$ on a neighborhood of each point $y\in \R^n\setminus\{x\}$ for some polytope $P'$ of dimension at most $n-1$ by Lemma~\ref{lemma:supportFunctionFace}. Using a partition of unity, we can assume that the support of $\phi$ is contained in such a neighborhood. As $\Psi$ is locally determined, this implies
	\begin{align*}
	\Psi(h_P(\cdot-x))[\phi]=\Psi(h_{P'}(\cdot-x))[\phi].
	\end{align*}
	However, $\Psi$ is a dually simple valuation, so the right hand side vanishes. Thus $S(\Psi)[\cdot,x][\phi]$ vanishes on polytopes and therefore on all convex bodies by continuity. If $x\notin U$, this implies that $S(\Psi)[\cdot,x]$ vanishes identically. If $x\in U$, the measure $S(\Psi)[K,x]\in\mathcal{M}(U)$ is supported on $\{x\}$ for every $K\in\mathcal{K}(\R^n)$ and is therefore a multiple of $\delta_x$.	By Corollary~\ref{cor:characteristicFunctionMaxDeg},
	\begin{align*}
	S(\Psi)[K,x]=	\frac{1}{\omega_n}S(\Psi)[B_1(0),x]\vol(K),
	\end{align*}
	so for every $x\in U$, $S(\Psi)[K,x]=\psi(x)\vol(K)\delta_x$ for 
	\begin{align*}
		\psi(x):=\frac{1}{\omega_n}S(\mu)[B_1(0),x](\{x\})=\frac{1}{\omega_n}\Psi(h_{B_1(0)}(\cdot-x))[\{x\}].
	\end{align*}
	Note that $\psi$ is continuous on $U$: For any compact subset $K\subset U$, there exists $\phi\in C_c(U)$ which is equal to $1$ on a neighborhood of this subset. Thus
	\begin{align*}
		\psi(x)=\frac{1}{\omega_n}S(\Psi)[B_1(0),x](\{x\})=\frac{1}{\omega_n}S\left(\int_{\R^n}\phi d\Psi\right)[B_1(0),x],\quad\text{for}~x\in K.
	\end{align*}
	where $\int_{\R^n}\phi d\Psi\in \VConv_n(U)$ is a continuous, complex-valued valuation. As $S\left(\int_{\R^n}\phi d\Psi\right)$ defines a continuous function on $U$ by Theorem~\ref{theorem_embedding_Cont}, $\psi$ coincides with a continuous function on every compact subset of $U$ and is thus continuous.\\
	Next, note that the functional $\tilde{\Psi}:\Conv(\R^n,\R)\rightarrow\mathcal{M}(U)$ given for $f\in\Conv(\R^n,\R)$ by
	\begin{align*}
		\tilde{\Psi}(f)[B]=\int_{B}\psi(x)d\MA(f,x)\quad\text{for all relatively compact Borel sets}~B\subset U
	\end{align*}
	defines an element of $\VConv_n(U,\mathcal{M}(U))$ as $\psi$ is continuous. Because $\MA(h_K(\cdot-x))=\vol(K)\delta_x$, $S(\tilde{\Psi})[x]=S(\Psi)[x]$ for $x\in \R^n$. As $S$ is injective due to Theorem~\ref{theorem:characteristicFunction}, this implies $\Psi=\tilde{\Psi}$, which shows the representation formula.
\end{proof}

As a corollary, we obtain the following characterization of the real Monge-Amp\`ere operator.
\begin{corollary}
	\label{corollary:CharacterizationMA}
	Let $\Psi:\Conv(\R^n,\R)\rightarrow \mathcal{M}(\R^n)$ be a continuous, dually simple, and dually epi-translation invariant valuation. If $\Psi$ is locally determined and translation equivariant, then there exists $c\in \C$ such that $\Psi=c\cdot\MA$. In particular, $\MAVal_n(\R^n)$ is $1$-dimensional and spanned by $\MA$.
\end{corollary}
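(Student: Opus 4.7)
The plan is to invoke Theorem~\ref{theorem:characteristicFunction} to reduce the statement to a single continuous density $\psi$ on $\R^n$, and then use the translation equivariance hypothesis (both on $\Psi$ and on the Monge-Amp\`ere operator itself) to force $\psi$ to be constant.

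First, by Theorem~\ref{theorem:simpleValuations} the hypothesis that $\Psi$ is dually simple is equivalent to $\Psi$ being $n$-homogeneous, so $\Psi\in\VConv_n(\R^n,\M(\R^n))$ and the assumptions of Theorem~\ref{theorem:characteristicFunction} are satisfied. I would therefore obtain a unique $\psi\in C(\R^n)$ such that
\[
\Psi(f)[B]=\int_B\psi(x)\,d\MA(f)[x]
\]
for every $f\in\Conv(\R^n,\R)$ and every relatively compact Borel set $B\subset\R^n$.

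Next I would exploit translation equivariance. Since the real Monge-Amp\`ere operator is itself translation equivariant, the identity $\Psi(f(\cdot+x_0))[B]=\Psi(f)[B+x_0]$ combined with the representation above yields, after the obvious change of variables in the measure $\MA(f(\cdot+x_0))$,
\[
\int_{B+x_0}\psi(y-x_0)\,d\MA(f)[y]=\int_{B+x_0}\psi(y)\,d\MA(f)[y]
\]
for all $f$, $B$ and $x_0\in\R^n$. Specializing to $f(x)=\tfrac{1}{2}|x|^2$, so that $\MA(f)=\vol_n$, and letting $B$ range over bounded Borel sets, I conclude $\psi(y-x_0)=\psi(y)$ for almost every $y\in\R^n$ and every $x_0$. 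Continuity of $\psi$ then forces it to be a constant $c\in\C$, whence $\Psi=c\,\MA$.

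For the final assertion, every $\Psi\in\MAVal_n(\R^n)$ is by definition continuous, dually epi-translation invariant, locally determined, translation equivariant and $n$-homogeneous, the last of which is equivalent to dual simplicity by Theorem~\ref{theorem:simpleValuations}; hence the corollary applies and shows $\MAVal_n(\R^n)=\C\cdot\MA$. There is no real obstacle in this argument: the technical content has already been absorbed into Theorem~\ref{theorem:characteristicFunction}, and the only genuine step is to transport translation equivariance through the integral representation, which is immediate from the translation equivariance of $\MA$.
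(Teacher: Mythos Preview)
Your proof is correct. The paper states this corollary without proof, treating it as an immediate consequence of Theorem~\ref{theorem:characteristicFunction}; your argument supplies the natural details, though one could shorten it slightly by using the explicit formula $\psi(x)=\tfrac{1}{\omega_n}\Psi(h_{B_1(0)}(\cdot-x))[\{x\}]$ from Theorem~\ref{theorem:characteristicFunction} together with translation equivariance to see directly that $\psi(x)=\tfrac{1}{\omega_n}\Psi(h_{B_1(0)})[\{0\}]$ is constant, rather than specializing to $f=\tfrac12|x|^2$.
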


	\subsection{Construction of measure-valued valuations using the differential cycle}
	\begin{theorem}
		\label{theorem:measuresDifferentialCycle}
		Let $\tau\in\Omega^n(T^*\R^n)$ be a continuous differential $n$-form and define $\Psi_\tau(f)\in\mathcal{M}(\R^n)$ for $f\in\Conv(\R^n)$ by 
		\begin{align*}
			\Psi_\tau(f)[B]:=D(f)[1_{\pi^{-1}(B)}\tau]\quad\text{for all bounded Borel sets } B\subset\R^n.
		\end{align*}
		Then $\Psi_\tau:\Conv(\R^n,\R)\rightarrow\mathcal{M}(\R^n)$ is a continuous and locally determined valuation. If $\tau$ is invariant with respect to translations in the second factor of $T^*\R^n=\R^n\times(\R^n)^*$, then $\Psi_\tau$ is dually epi-translation invariant. If $\tau$ is invariant with respect to translations in the first factor, then $\Psi_\tau$ is translation equivariant.
	\end{theorem}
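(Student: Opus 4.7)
The plan is to first construct $\Psi_\tau(f)$ as a complex Radon measure, then verify each property in turn by exploiting the functoriality of the differential cycle provided by Propositions~\ref{proposition:differentialCycleSum} and~\ref{proposition:differential_cycle_and_diffeomorphisms}. For fixed $f\in\Conv(\R^n,\R)$, I would define a linear functional $L_f:C_c(\R^n)\to\C$ by $L_f(\phi):=D(f)[(\pi^*\phi)\tau]$. Since $D(f)$ is locally vertically bounded, the form $(\pi^*\phi)\tau$ meets $\supp D(f)$ in a compact set, so this pairing makes sense for continuous $\tau$. Standard local mass bounds for differential cycles of convex functions give $|L_f(\phi)|\le C_K\|\phi\|_\infty$ whenever $\supp\phi\subset K$, and the Riesz representation theorem then produces a unique complex Radon measure $\Psi_\tau(f)\in\mathcal{M}(\R^n)$ representing $L_f$. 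A monotone class argument extends this to the identity $\Psi_\tau(f)[B]=D(f)[1_{\pi^{-1}(B)}\tau]$ for every bounded Borel set $B$.

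The valuation property is inherited from the corresponding identity $D(f\vee g)+D(f\wedge g)=D(f)+D(g)$ of integral currents, which is a known property of the differential cycle on convex functions and pairs linearly with $(\pi^*\phi)\tau$. Continuity of $\Psi_\tau$ in the weak*-topology follows from the fact that locally uniform convergence $f_j\to f$ in $\Conv(\R^n,\R)$ implies convergence of the associated differential cycles (established in Fu's work), combined with the uniform local mass bounds that allow extending weak convergence from smooth to continuous test forms of the shape $(\pi^*\phi)\tau$. For local determination, if $f\equiv h$ on an open set $V\subset\R^n$, then the uniqueness characterization of Theorem~\ref{theorem:characterization_differential_cycle} applied locally on $\pi^{-1}(V)$ forces $D(f)$ and $D(h)$ to coincide there, so $\Psi_\tau(f)[B\cap V]=\Psi_\tau(h)[B\cap V]$ for every relatively compact Borel set $B$.

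The two invariance statements reduce to pull-back computations. For an affine $l(x)=\langle a,x\rangle+b$, Proposition~\ref{proposition:differentialCycleSum} yields $D(f+l)=G_{l*}D(f)$ with $G_l(x,y)=(x,y+a)$; since $\pi\circ G_l=\pi$, we have $G_l^*(\pi^*\phi)=\pi^*\phi$, and invariance of $\tau$ under fiber translations gives $G_l^*\tau=\tau$, so the defining formula yields $\Psi_\tau(f+l)=\Psi_\tau(f)$. For a translation $T_x(y)=y+x$, Proposition~\ref{proposition:differential_cycle_and_diffeomorphisms} gives $D(f\circ T_x)=(T_x^\#)_*D(f)$ with $T_x^\#(z,w)=(z-x,w)$, and a direct substitution shows $1_{\pi^{-1}(B)}\circ T_x^\#=1_{\pi^{-1}(B+x)}$. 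If $\tau$ is invariant under translations in the first factor, then $(T_x^\#)^*\tau=\tau$, and combining these facts produces $\Psi_\tau(f\circ T_x)[B]=\Psi_\tau(f)[B+x]$. I expect the main technical obstacle to be Step~1: making rigorous sense of the discontinuous pairing $D(f)[1_{\pi^{-1}(B)}\tau]$ and establishing continuity in $f$, both of which hinge on controlling the local mass of $D(f)$ uniformly as $f$ varies.
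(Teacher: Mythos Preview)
Your proposal is correct and follows essentially the same route as the paper: continuity is obtained from convergence of differential cycles (the paper cites \cite{KnoerrSingularvaluationsHadwiger2022} rather than Fu directly for this step), local determination from the local uniqueness of $D(f)$ on $\pi^{-1}(V)$ in Fu's work, and both invariance statements from the pushforward formulas of Propositions~\ref{proposition:differentialCycleSum} and~\ref{proposition:differential_cycle_and_diffeomorphisms} exactly as you outline. The paper's proof is terser---it suppresses the Riesz-representation construction and the valuation identity $D(f\vee g)+D(f\wedge g)=D(f)+D(g)$, treating both as implicit in the cited references---but your added detail is correct and addresses precisely the technical point you flagged.
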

	\begin{proof}
		Note that for $\phi\in C_c(\R^n)$, $f\in\Conv(\R^n,\R)$,
		\begin{align*}
			\int_{\R^n}\phi(x)d\Psi_\tau(f,x)=D(f)[\pi^*\phi\wedge\tau].
		\end{align*}
		It was shown in \cite[Corollary~4.7]{KnoerrSingularvaluationsHadwiger2022}  that the right hand side of this equation depends continuously on $f\in\Conv(\R^n,\R)$. In particular, $\Psi_\tau$ is weakly*-continuous. It follows from the remarks in \cite[Section~2.1]{FuMongeAmpereFunctions1989} that $f=h$ on an open subset $U\subset\R^n$ implies $D(f)|_{\pi^{-1}(U)}=D(h)|_{\pi^{-1}(U)}$, which shows that $\Psi_\tau$ is locally determined.\\
		
		If $\tau$ is invariant with respect to translations in the second factor, then $1_{\pi^{-1}(B)}\wedge \tau$ has the same property for all bounded Borel sets $B\subset\R^n$. If $\lambda:\R^n\rightarrow\R$ is an affine function, then Proposition~\ref{proposition:differentialCycleSum} implies
		\begin{align*}
			\Psi_\tau(f+\lambda)[B]=&G_{\lambda*}D(f)[1_{\pi^{-1}(B)}\wedge \tau]=D(f)[G_\lambda^* (1_{\pi^{-1}(B)}\wedge \tau)]=D(f)[1_{\pi^{-1}(B)}\wedge \tau]\\
			=&\Psi_\tau(f)[B].
		\end{align*}
		Thus, $\Psi_\tau$ is dually epi-translation invariant.\\
		
		Finally, assume that $\tau$ is invariant with respect to translations in the first factor. For $x_0\in \R^n$, consider the map 
		\begin{align*}
			\Phi_{x_0}:T^*\R^n&\rightarrow T^*\R^n\\
			(x,y)&\mapsto (x-x_0,y).
		\end{align*}
		Then $D(f(\cdot+x_0))=\Phi_{x_0*}D(f)$ according to Proposition~\ref{proposition:differential_cycle_and_diffeomorphisms}, and we obtain for all $\phi\in C_c(\R^n)$
		\begin{align*}
			&\int_{\R^n}\phi(\cdot-x_0)d\Psi(f)=D(f)[\pi^*\phi(\cdot-x_0)\wedge\tau]=D(f)\left[\Phi_{x_0}^*\left(\pi^*\phi \wedge \tau\right)\right]\\
			=&\left(\Phi_{x_0*}D(f)\right)[\pi^*\phi \wedge \tau]=D(f(\cdot+x_0))[\pi^*\phi \wedge \tau]=\int_{\R^n}\phi d\Psi(f(\cdot+x_0)).
		\end{align*}
		Thus $\Psi_\tau$ is translation equivariant.
	\end{proof}
	Note that $\GL(n,\R)$ operates on $T^*\R^n=\R^n\times(\R^n)^*$ by the diagonal action
	\begin{align*}
		g^\#(x,y)=(g^{-1}x,g^*y)\quad \text{for}~g\in\GL(n,\R),~(x,y)\in \R^n\times(\R^n)^*.
	\end{align*}
	Consequently, $\GL(n,\R)$ operates on $\Lambda^{n-k,k}$ by pullbacks:
	\begin{align*}
		g\cdot \tau=(g^\#)^*\tau\quad \text{for}~g\in\GL(n,\R),~\tau\in \Lambda^{n-k,k}.
	\end{align*}
	\begin{corollary}
		\label{corollary:equivarianceMeasuresFromDifferentialForms}
		The map 
		\begin{align*}
			\Lambda^{n-k,k}&\rightarrow\MAVal_k(\R^n)\\
			\tau&\mapsto \Psi_\tau
		\end{align*}
		is well defined and $\GL(n,\R)$-equivariant in the following sense: For $f\in\Conv(\R^n,\R)$
		\begin{align*}
			\Psi_{g\cdot\tau}(f)[B]=\mathrm{sign}(\det(g) )\Psi_\tau(f\circ g)[g^{-1}B]
		\end{align*}
		for all bounded Borel sets $B\subset\R^n$.
	\end{corollary}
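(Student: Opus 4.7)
The plan is to verify the two claims separately --- that $\Psi_\tau \in \MAVal_k(\R^n)$ for every $\tau \in \Lambda^{n-k,k}$, and that the assignment $\tau \mapsto \Psi_\tau$ satisfies the stated $\GL(n,\R)$-equivariance. Both are naturality statements for the differential cycle and reduce to combining Theorem~\ref{theorem:measuresDifferentialCycle} with Proposition~\ref{proposition:differential_cycle_and_diffeomorphisms} and one additional homogeneity calculation.

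For well-definedness, Theorem~\ref{theorem:measuresDifferentialCycle} already yields continuity, local determinacy, dual epi-translation invariance, and translation equivariance of $\Psi_\tau$, since any constant form $\tau \in \Lambda^{n-k,k}$ is automatically invariant under translations in both factors of $T^*\R^n$. The only remaining property is $k$-homogeneity. I would obtain this by introducing the fibrewise scaling map $M_t\colon T^*\R^n \to T^*\R^n$, $(x,y) \mapsto (x,ty)$, for $t>0$, and checking via the uniqueness part of Fu's characterization (Theorem~\ref{theorem:characterization_differential_cycle}) that $M_{t*}D(f) = D(tf)$: Lagrangianity uses $M_t^*\omega_s = t\omega_s$, while closedness, local vertical boundedness, and the defining integral identity are immediate from $\pi \circ M_t = \pi$. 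Since $M_t^*dx_i = dx_i$ and $M_t^*dy_j = t\,dy_j$, every $\tau \in \Lambda^{n-k,k}$ satisfies $M_t^*\tau = t^k\tau$, so
\begin{align*}
\Psi_\tau(tf)[B] = D(f)\bigl[M_t^*(1_{\pi^{-1}(B)}\wedge \tau)\bigr] = t^k\,\Psi_\tau(f)[B],
\end{align*}
and $k$-homogeneity follows.

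For the equivariance, fix $g \in \GL(n,\R)$, $f \in \Conv(\R^n,\R)$, and a bounded Borel set $B \subset \R^n$. Proposition~\ref{proposition:differential_cycle_and_diffeomorphisms} gives $D(f \circ g) = \sign(\det g)\,(g^\#)_*D(f)$, and the identity $\pi \circ g^\# = g^{-1}\circ \pi$ yields $(g^\#)^{-1}\pi^{-1}(g^{-1}B) = \pi^{-1}(B)$, hence $(g^\#)^*1_{\pi^{-1}(g^{-1}B)} = 1_{\pi^{-1}(B)}$. Together with $g\cdot\tau = (g^\#)^*\tau$, a push-pull computation gives
\begin{align*}
\Psi_\tau(f\circ g)[g^{-1}B] &= D(f\circ g)\bigl[1_{\pi^{-1}(g^{-1}B)}\wedge \tau\bigr] \\
&= \sign(\det g)\,D(f)\bigl[(g^\#)^*\bigl(1_{\pi^{-1}(g^{-1}B)}\wedge \tau\bigr)\bigr] \\
&= \sign(\det g)\,D(f)\bigl[1_{\pi^{-1}(B)}\wedge (g\cdot\tau)\bigr] \\
&= \sign(\det g)\,\Psi_{g\cdot\tau}(f)[B],
\end{align*}
which, after multiplying through by $\sign(\det g)$, is the asserted identity.

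The only non-routine ingredient is the identification $D(tf) = M_{t*}D(f)$, which has to be checked axiom by axiom, since neither Proposition~\ref{proposition:differentialCycleSum} (the added summand must be $C^2$, whereas $(t-1)f$ need not be) nor Proposition~\ref{proposition:differential_cycle_and_diffeomorphisms} applies directly. Everything else is formal naturality of the pushforward of currents, and the sign appearing in the final formula is exactly the sign from Fu's pushforward formula for orientation-reversing diffeomorphisms.
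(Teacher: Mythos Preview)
Your proof is correct and essentially follows the paper's approach: the equivariance computation is the same push-pull argument via Proposition~\ref{proposition:differential_cycle_and_diffeomorphisms}, just written starting from $\Psi_\tau(f\circ g)[g^{-1}B]$ rather than from $\Psi_{g\cdot\tau}(f)[B]$. Your treatment is in fact more complete than the paper's, which simply cites Theorem~\ref{theorem:measuresDifferentialCycle} for well-definedness without addressing $k$-homogeneity; your verification of $D(tf)=M_{t*}D(f)$ via Fu's axioms fills that gap cleanly.
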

	\begin{proof}
		It follows from Theorem~\ref{theorem:measuresDifferentialCycle} that this map is well defined. Using Proposition~\ref{proposition:differential_cycle_and_diffeomorphisms}, we obtain for $g\in\GL(n,\R)$, $\tau\in \Lambda^{n-k,k}$,
		\begin{align*}
			\Psi_{(g^\#)^*\tau}(f)[B]=&D(f)[1_{\pi^{-1}(B)}(g^\#)^*\tau]\\
			=&D(f)[(g^\#)^*(((g^{-1)})^\#)^*1_{\pi^{-1}(B)}\tau)]\\
			=&\left((g^\#)_*D(f)\right)[1_{\pi^{-1}(g^{-1}B)}\tau]\\
			=&\mathrm{sign}(\det(g)) D(f\circ g)[1_{\pi^{-1}(g^{-1}B)}\tau]\\
			=&\mathrm{sign}(\det(g)) \Psi_\tau(f\circ g)[g^{-1}B],
		\end{align*}
		so this map is $\GL(n,\R)$-equivariant.
	\end{proof}

\section{Translation equivariant valuations and restrictions to subspaces}
\label{section:Restrictions}
For a subspace $E\subset\R^n$ let $\pi_E:\R^n\rightarrow E$ denote the orthogonal projection and $E^\perp\subset\R^n$ its orthogonal complement. Given a finite measure $\mu$ on $\R^n$, we will denote its pushforward along $\pi_{E^\perp}$ by $\pi_{E^\perp*}\mu$.
\begin{lemma}
	\label{lemma:decompRestriction}
	Fix $\Psi\in \MAVal_k(\R^n)$ and let $E\subset\R^n$ be a $k$-dimensional subspace, $f\in\Conv(E,\R)$. Then
	\begin{align*}
		\pi_{E^\perp*}[\Psi(\pi_E^*f)\llcorner \pi_E^*\phi_E]\in\mathcal{M}(E^\perp)
	\end{align*}
	is a multiple of the Lebesgue measure on $E^\perp$ for every $\phi_E\in C_c(E)$.
\end{lemma}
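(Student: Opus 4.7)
The key observation is that $\pi_E^*f$ is constant along directions in $E^\perp$: for any $w\in E^\perp$ and $x\in\R^n$,
\begin{align*}
	\pi_E^*f(x+w)=f(\pi_E(x+w))=f(\pi_E(x))=\pi_E^*f(x),
\end{align*}
since $\pi_E w=0$. My plan is to exploit this invariance via translation equivariance of $\Psi$ to reduce the claim to the uniqueness of Haar measure on $E^\perp$.

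First, I would fix $w\in E^\perp$ and use the definition of translation equivariance together with the identity $\pi_E^*f(\cdot+w)=\pi_E^*f$ to obtain, for every bounded Borel set $A\subset\R^n$,
\begin{align*}
	\Psi(\pi_E^*f)[A+w]=\Psi(\pi_E^*f(\cdot+w))[A]=\Psi(\pi_E^*f)[A].
\end{align*}
Hence $\nu:=\Psi(\pi_E^*f)$ is invariant under all translations by elements of $E^\perp$. Since $\pi_E^*\phi_E$ is likewise $E^\perp$-invariant (because $\pi_E(x+w)=\pi_E(x)$), a change of variables in the integral shows that the weighted measure $\mu:=\nu\llcorner \pi_E^*\phi_E$ is also invariant under $E^\perp$-translations.

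Next, I would push $\mu$ forward under $\pi_{E^\perp}$. For any $w\in E^\perp$ and any Borel set $B\subset E^\perp$, the identity $\pi_{E^\perp}^{-1}(B+w)=\pi_{E^\perp}^{-1}(B)+w$ together with the $E^\perp$-invariance of $\mu$ gives
\begin{align*}
	\pi_{E^\perp*}\mu[B+w]=\mu[\pi_{E^\perp}^{-1}(B)+w]=\mu[\pi_{E^\perp}^{-1}(B)]=\pi_{E^\perp*}\mu[B],
\end{align*}
so $\pi_{E^\perp*}\mu$ is a translation-invariant Borel measure on $E^\perp$. To ensure this pushforward defines a Radon measure, I would verify local finiteness: for any compact $K\subset E^\perp$, the set $\pi_{E^\perp}^{-1}(K)\cap\supp(\pi_E^*\phi_E)$ equals $\supp(\phi_E)+K$, which is compact in $\R^n$. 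Since $\nu$ is a complex Radon measure, its total variation on this compact set is finite, and the bound $|\pi_{E^\perp*}\mu|[K]\le\|\phi_E\|_\infty|\nu|[\supp(\phi_E)+K]$ gives local finiteness.

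Finally, a translation-invariant complex Radon measure on the Euclidean space $E^\perp$ is a complex multiple of Lebesgue measure (this follows from the uniqueness of Haar measure, applied to the real and imaginary parts via the fact that a translation-invariant continuous linear functional on $C_c(E^\perp)$ is a scalar multiple of Lebesgue integration). This completes the proof. I do not anticipate any serious obstacle; the only subtle point is checking local finiteness of the pushforward, which hinges on $\phi_E$ having compact support in $E$.
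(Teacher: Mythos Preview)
Your proof is correct and follows essentially the same approach as the paper: both arguments use translation equivariance of $\Psi$ together with the $E^\perp$-invariance of $\pi_E^*f$ (and of $\pi_E^*\phi_E$) to conclude that the pushforward is a translation-invariant Radon measure on $E^\perp$, hence a multiple of Lebesgue measure. The only differences are stylistic: the paper phrases the invariance check via test functions $\psi\in C_c(E^\perp)$ rather than Borel sets, and it omits the explicit verification of local finiteness that you include (which is a nice touch, since without compact support of $\phi_E$ the pushforward need not be Radon).
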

\begin{proof}
	Note that $\pi_{E^\perp*}[\Psi(\pi_E^*f)\llcorner \phi_E]\in \mathcal{M}(E^\perp)$ is translation invariant: If $x_{E^\perp}\in E^\perp$ and $\psi\in C_c(E^\perp)$, then
	\begin{align*}
		\int_{E^\perp} \psi(\cdot-x_{E^\perp})d\left(\pi_{E^\perp*}[\Psi(\pi_E^*f)\llcorner \pi_E^*\phi_E]\right)=&\int_{\R^n}\pi_E^*\phi_E\cdot \pi_{E^\perp}^*\psi(\cdot-x_{E^\perp})d\Psi(\pi_E^*f)\\
		=&\int_{\R^n}\pi_E^*\phi_E\cdot \pi_{E^\perp}^*\psi d\Psi(\pi_E^*f(\cdot+x_{E^\perp})),
	\end{align*}
	as $\Psi$ is translation equivariant. But $\pi_E^*f(\cdot+x_{E^\perp})=\pi_E^*f$. Therefore, this measure is translation invariant and consequently a multiple of the Lebesgue measure on $E^\perp$.
\end{proof}
\begin{proposition}
	\label{proposition:KlainEmbedding}
	For every $\Psi\in \MAVal_k(\R^n)$ there exists a unique continuous function $\KL_\Psi\in C(\Gr_k(\R^n))$ such that for $E\in\Gr_k(\R^n)$
	\begin{align*}
		\Psi(\pi_{E}^*f)=\KL_\Psi(E)\MA_E(f)\otimes \vol_{E^\perp}\quad\text{for}~f\in\Conv(E,\R).
	\end{align*}
	Moreover, $\Psi=0$ if and only if $\KL_\Psi=0$.
\end{proposition}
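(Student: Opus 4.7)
The plan is to fix $E\in\Gr_k(\R^n)$, disintegrate the measure $\Psi(\pi_E^*f)$ as a product $\nu_E(f)\otimes\vol_{E^\perp}$ using Lemma~\ref{lemma:decompRestriction}, verify that $\nu_E$ defines an element of $\MAVal_{\dim E}(E)=\MAVal_k(E)$, and then apply Corollary~\ref{corollary:CharacterizationMA} inside $E$ to pin down $\nu_E$ as a scalar multiple of $\MA_E$.

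First I would carry out the disintegration. Fixing $f\in\Conv(E,\R)$, Lemma~\ref{lemma:decompRestriction} provides for each $\phi\in C_c(E)$ a scalar $c(\phi,f)\in\C$ with $\pi_{E^\perp*}[\Psi(\pi_E^*f)\llcorner\pi_E^*\phi]=c(\phi,f)\vol_{E^\perp}$. Choosing $\psi\in C_c(E^\perp)$ with $\int\psi\,d\vol_{E^\perp}=1$, the identity
\begin{align*}
c(\phi,f)=\int_{\R^n}\pi_E^*\phi\cdot\pi_{E^\perp}^*\psi\,d\Psi(\pi_E^*f)
\end{align*}
shows that $\phi\mapsto c(\phi,f)$ is a continuous linear functional on $C_c(E)$, hence defines a Radon measure $\nu_E(f)\in\mathcal{M}(E)$ independent of $\psi$. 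The identity above for arbitrary products $\pi_E^*\phi\cdot\pi_{E^\perp}^*\psi$ and the density of their linear span in $C_c(\R^n)$ then give $\Psi(\pi_E^*f)=\nu_E(f)\otimes\vol_{E^\perp}$.

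Next I would show that $f\mapsto\nu_E(f)$ belongs to $\MAVal_k(E)$ as a valuation on $\Conv(E,\R)$. Continuity and the valuation property are inherited from $\Psi$; $k$-homogeneity follows from $\Psi(t\pi_E^*f)=t^k\Psi(\pi_E^*f)$; dual epi-translation invariance uses that the pullback of an affine function on $E$ is affine on $\R^n$; translation equivariance uses $\pi_E^*(f(\cdot+x_E))=(\pi_E^*f)(\cdot+x_E)$ for $x_E\in E$; local determination transfers by testing on product Borel sets $(B\cap V)\times B^\perp\subset\pi_E^{-1}(V)$ with $\vol_{E^\perp}(B^\perp)\neq 0$. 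Since $\dim E=k$, Corollary~\ref{corollary:CharacterizationMA} applied on $E$ yields a scalar $\KL_\Psi(E)\in\C$ with $\nu_E(f)=\KL_\Psi(E)\MA_E(f)$ for every $f\in\Conv(E,\R)$.

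Finally, for continuity of $E\mapsto\KL_\Psi(E)$, I would test with the paraboloid $f_E(x)=|x|^2/2$ on $E$, which satisfies $\MA_E(f_E)=\vol_E$ and $\vol_E\otimes\vol_{E^\perp}=\vol_n$, so that $\Psi(\pi_E^*f_E)=\KL_\Psi(E)\vol_n$. As $E$ varies in $\Gr_k(\R^n)$ the function $y\mapsto|\pi_E y|^2/2$ depends continuously on $E$ locally uniformly on $\R^n$; fixing any $\phi\in C_c(\R^n)$ with $\int\phi\,d\vol_n\neq 0$, the weak*-continuity of $\Psi$ shows that
\begin{align*}
\KL_\Psi(E)=\frac{\int_{\R^n}\phi\,d\Psi(\pi_E^*f_E)}{\int_{\R^n}\phi\,d\vol_n}
\end{align*}
is continuous in $E$. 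Uniqueness is immediate from the non-vanishing of $\MA_E(f_E)\otimes\vol_{E^\perp}$. For the equivalence, one direction is trivial, and if $\KL_\Psi\equiv 0$ then $\Psi(\pi_E^*f)=0$ for every $E\in\Gr_k(\R^n)$ and $f\in\Conv(E,\R)$, so $\Psi$ vanishes on every $k$-dimensional subspace in the sense of Theorem~\ref{theorem:simpleValuations}, forcing $\Psi=0$.

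The main obstacle is the disintegration step: one must convert the pushforward statement of Lemma~\ref{lemma:decompRestriction} into the genuinely product form $\nu_E(f)\otimes\vol_{E^\perp}$ as Radon measures on $\R^n$. Once this is in place, everything else reduces transparently to Corollary~\ref{corollary:CharacterizationMA}, Theorem~\ref{theorem:simpleValuations}, and the continuity of the paraboloid family.
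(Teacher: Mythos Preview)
Your argument is correct and follows essentially the same route as the paper: disintegrate via Lemma~\ref{lemma:decompRestriction} and Stone--Weierstrass, apply Corollary~\ref{corollary:CharacterizationMA} on $E$, then invoke Theorem~\ref{theorem:simpleValuations} for injectivity. Your continuity step is a mild simplification: the paper tests with $h_{B_E}$ and $E$-dependent radial cut-offs, whereas your choice of the paraboloid $f_E=|\cdot|^2/2$ yields $\Psi(\pi_E^*f_E)=\KL_\Psi(E)\vol_n$ and lets a single fixed $\phi\in C_c(\R^n)$ do the job.
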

\begin{proof}
	Given $f\in\Conv(E,\R)$, $\pi_{E^\perp*}[\Psi(\pi_E^*f)\llcorner \phi_E]$ is a multiple of the Lebesgue measure on $E^\perp$ by Lemma~\ref{lemma:decompRestriction}. Fix $\phi\in C_c(E^\perp)$ with $\int_{E ^\perp}\phi(x)d\vol_{E^\perp}(x)=1$. Then
	\begin{align*}
		\tilde{\Psi}:\Conv(E,\R)\rightarrow&\mathcal{M}(E)=(C_c(E))'\\
		f\mapsto&\left(\phi_E\mapsto  \int_{E^\perp}\phi d[\pi_{E^\perp*}[\Psi(\pi_E^*f)\llcorner \phi_E]]\right)
	\end{align*} 
	belongs to $\MAVal_k(E)$ and is thus a multiple of $\MA_E$. Hence, there exists $\KL_\Psi(E)\in\C$ such that 
	\begin{align*}
		\tilde{\Psi}(\pi_{E}^*\cdot)=\KL_\Psi(E)\MA_E.
	\end{align*} 
	For $f\in \Conv(E,\R)$ consider the measure $\Psi_0(f):=\KL_\Psi(E)\MA_E(f)\otimes \vol_{E^\perp}\in \mathcal{M}(\R^n)$. For $\phi_E\in C_c(E)$ and $\phi_{E^\perp}\in C_c(E^\perp)$, we have
	\begin{align*}
		&\int_{\R^n}\pi_E^*\phi_E\cdot \pi_{E^\perp}^*\phi_{E^\perp}d\Psi_0(f)=\KL_\Psi(E)\int_E\phi_E d\MA_E(f)\cdot \int_{E^\perp}\phi_{E^\perp} d\vol_{E^\perp}\\
		=&\int_E\phi_Ed\tilde{\Psi}(f)\int_{E^\perp}\phi_{E^\perp} d\vol_{E^\perp}=\int_{E^\perp}\phi d[\pi_{E^\perp*}[\Psi(\pi_E^*f)\llcorner \phi_E]]\int_{E^\perp}\phi_{E^\perp} d\vol_{E^\perp}.
	\end{align*}
	As $\pi_{E^\perp*}[\Psi(\pi_E^*f)\llcorner \phi_E]]$ is a multiple of the Lebesgue measure and $\int_{E^\perp}\phi d\vol_{E^\perp}=1$,
	\begin{align*}
		\int_{E^\perp}\phi d[\pi_{E^\perp*}[\Psi(\pi_E^*f)\llcorner \phi_E]]\int_{E^\perp}\phi_{E^\perp} d\vol_{E^\perp}=\int_{E^\perp}\phi_{E^\perp} d[\pi_{E^\perp*}[\Psi(\pi_E^*f)\llcorner \phi_E]],
	\end{align*}
	so we obtain
	\begin{align*}
		\int_{\R^n}\pi_E^*\phi_E\cdot \pi_{E^\perp}^*\phi_{E^\perp}d\Psi_0(f)=&\int_{E^\perp}\phi_{E^\perp} d[\pi_{E^\perp*}[\Psi(\pi_E^*f)\llcorner \phi_E]]\\
		=&\int_{\R^n}\pi_E^*\phi_E\cdot \pi_{E^\perp}^*\phi_{E^\perp}d\Psi(\pi_E^*f).
	\end{align*}
	By the Stone-Weierstrass theorem, functions of the form $x\mapsto \phi_E(\pi_E(x))\cdot \phi_{E^\perp}(\pi_{E^\perp}(x))$ span a dense subspace of $C_c(\R^n)$. Thus 
	\begin{align*}
		\Psi(\pi_E^*f)=\Psi_0(f)=\KL_\Psi(E)\MA_E(f)\otimes \vol_{E^\perp}\quad\text{for}~f\in\Conv(E,\R).
	\end{align*}
	
	We obtain a well defined map $\KL_\Psi:\Gr_k(\R^n)\rightarrow\C$. To see that $\KL_\Psi$ is continuous, observe that we can recover this functions in the following way: First, choose $\phi_1,\phi_2\in C_c([0,\infty))$ such that $\phi_1(0)=1$, $\int_{\R^{n-k}}\phi_2(|x|)d\vol_{n-k}(x)=1$. If $B_E$ denotes the unit ball in $E$, then
	\begin{align*}
		\int_{\R^n}\pi_E^*\phi_1\cdot \pi_{E^\perp}^*\phi_2d\Psi(\pi^*_Eh_{B_E})=\KL_\Psi(E)\int_{\R^n}\pi_E^*\phi_1\cdot \pi_{E^\perp}^*\phi_2d(\MA_E(h_{B_E})\otimes \vol_{E^\perp}).
	\end{align*}
	As $\MA_E(h_{B_E})=\omega_k\delta_0$, we thus obtain
	\begin{align*}
		\KL_\Psi(E)=\frac{1}{\omega_k}\int_{\R^n}\pi_E^*\phi_1\cdot \pi_{E^\perp}^*\phi_2d\Psi(\pi^*_Eh_{B_E}),
	\end{align*}
	where the right hand side depends continuously on $E$. Thus $\KL_\Psi$ is continuous.\\
	
	Finally, let us show that $\KL_\Psi=0$ implies $\Psi=0$. Note that $\Psi=0$ if and only if the valuations
	\begin{align*}
		\mu_{\phi}(f):=\int_{\R^n}\phi d\Psi(f)
	\end{align*}
	vanish identically for all $\phi\in C_c(\R^n)$. By the characterization of dually simple valuations in Theorem~\ref{theorem:simpleValuations}, this is the case if and only if the restrictions
	\begin{align*}
		\mu_{\phi}(\pi_E^*f):=\int_{\R^n}\phi d\Psi(\pi_E^*f)=\KL_\Psi(E)\int_{\R^n}\phi d(\MA(f)\otimes\vol_{E^\perp}), \quad f\in\Conv(E,\R),
	\end{align*}
	vanishes identically for all $E\in \Gr_k(\R^n)$. Fix $E\in\Gr_k(\R^n)$. By the Stone-Weierstrass theorem, linear combinations of functions of the form $x\mapsto \phi_1(\pi_E(x))\cdot \phi_2(\pi_{E^\perp}(x))$, $\phi_1\in C_c(E)$, $\phi_2\in C_c(E^\perp)$ are dense in $C_c(\R^n)$. Thus the valuations $\mu_{\phi}(\pi_E^*f)$ vanish identically for all $\phi\in C_c(\R^n)$ if and only if the valuations
	\begin{align*}
		\mu_{\phi_1,\phi_2}(\pi_E^*f):=\int_{\R^n}\phi_1(\pi_E(x))\cdot \phi_2(\pi_{E^\perp}(x)) d\Psi(\pi_E^*f)=\KL_\Psi(E)\int_{\R^n}\phi_1 d\MA_E(f)\cdot \int_{E^\perp}\phi_2d\vol_{E^\perp}
	\end{align*}
	vanish identically for all $\phi_1\in C_c(E)$, $\phi_2\in C_c(E^\perp)$. If $\phi_1\ne 0$, then the valuation $\int_{\R^n}\phi_1 d\MA_E$ does not vanish identically, and we can choose $\phi_2\in C_c(E^\perp)$ with $\int_{E^\perp}\phi_2d\vol_{E^\perp}\ne 0$. Thus $\mu_{\phi_1,\phi_2}(\pi_E^*f)=0$ for all $f\in\Conv(E,\R)$ if and only if $\KL_\Psi(E)=0$. As this holds for all $E\in\Gr_k(\R^n)$, the claim follows.
\end{proof}
\begin{remark}
	Klain \cite{KlainshortproofHadwigers1995} has shown a similar characterization result for continuous, even, translation invariant valuations on convex bodies that are homogeneous of degree $k$. These similarities are not accidental and we will address them in a future article.
\end{remark}
If $G\subset \mathrm{O}(n)$ is a subgroup, then we call $\Psi\in \MAVal(\R^n)$ $G$-equivariant if
\begin{align*}
	g\cdot \Psi=\Psi\quad\text{for all}~g\in GF,
\end{align*}
where we consider the operation defined in \eqref{eq:defGlnROperation}. As the real Monge-Amp\`ere operator is $\mathrm{O}(n)$-equivariant, we directly obtain the following result.
\begin{corollary}
	\label{corollary:EquivKlainFunction}
	The map $\KL:\MAVal_k(\R^n)\rightarrow C(\Gr_k(\R^n))$ is $\mathrm{O}(n)$-equivariant and injective. In particular, $\Psi\in \MAVal_k(\R^n)$ is $G$-equivariant with respect to a subgroup $G\subset\mathrm{O}(n,\R)$ if and only if $\KL_\Psi$ is $G$-invariant.
\end{corollary}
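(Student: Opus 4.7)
The plan is to reduce the corollary to two independent facts about $\KL$: its injectivity and its $\mathrm{O}(n)$-equivariance. Injectivity is immediate from Proposition~\ref{proposition:KlainEmbedding}, since the assignment $\Psi\mapsto \KL_\Psi$ is linear and the proposition already states that $\KL_\Psi=0$ forces $\Psi=0$. Hence the entire content of the corollary beyond this is the equivariance statement, which I would phrase concretely as the identity
\begin{align*}
	\KL_{g\cdot\Psi}(E)=\KL_\Psi(g^{-1}E)\quad\text{for all }g\in\mathrm{O}(n),~E\in\Gr_k(\R^n).
\end{align*}

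To prove this identity, I would fix $g\in \mathrm{O}(n)$, $E\in\Gr_k(\R^n)$, and $f\in\Conv(E,\R)$, and set $\tilde f:=f\circ g|_{g^{-1}E}\in\Conv(g^{-1}E,\R)$. Because $g$ is orthogonal, it preserves the orthogonal decomposition $\R^n=E\oplus E^\perp$ in the sense that $\pi_E\circ g=g|_{g^{-1}E}\circ\pi_{g^{-1}E}$, so $(\pi_E^*f)\circ g=\pi_{g^{-1}E}^*\tilde f$. Using the defining action \eqref{eq:defGlnROperation} together with the representation from Proposition~\ref{proposition:KlainEmbedding} applied to $g^{-1}E$, I obtain
\begin{align*}
	[g\cdot\Psi](\pi_E^*f)[B]=\Psi(\pi_{g^{-1}E}^*\tilde f)[g^{-1}B]=\KL_\Psi(g^{-1}E)\bigl[\MA_{g^{-1}E}(\tilde f)\otimes\vol_{(g^{-1}E)^\perp}\bigr][g^{-1}B]
\end{align*}
for every bounded Borel set $B\subset\R^n$. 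The $\mathrm{O}(n)$-equivariance of the real Monge-Amp\`ere operator on a $k$-dimensional subspace and of the Lebesgue measure on the complementary subspace then gives $\MA_{g^{-1}E}(\tilde f)\otimes\vol_{(g^{-1}E)^\perp}=g^{-1}_*(\MA_E(f)\otimes\vol_{E^\perp})$; inserting this and comparing with Proposition~\ref{proposition:KlainEmbedding} applied to $E$ yields the desired identity.

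With injectivity and equivariance in hand, the final assertion is a formal deduction: for a subgroup $G\subset\mathrm{O}(n)$, the valuation $\Psi$ is $G$-equivariant if and only if $g\cdot\Psi=\Psi$ for every $g\in G$, which by injectivity of $\KL$ is equivalent to $\KL_{g\cdot\Psi}=\KL_\Psi$ for every $g\in G$, and by the equivariance identity this is equivalent to $\KL_\Psi(g^{-1}E)=\KL_\Psi(E)$ for all $g\in G$ and $E\in\Gr_k(\R^n)$, i.e.\ to $G$-invariance of $\KL_\Psi$.

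I expect the only real bookkeeping obstacle to be the verification that $\pi_E\circ g=g|_{g^{-1}E}\circ\pi_{g^{-1}E}$ and the resulting compatibility $\MA_{g^{-1}E}(f\circ g|_{g^{-1}E})\otimes\vol_{(g^{-1}E)^\perp}=g^{-1}_*(\MA_E(f)\otimes \vol_{E^\perp})$; once these are in place, the computation of $[g\cdot\Psi](\pi_E^*f)$ and the comparison with $\KL_{g\cdot\Psi}(E)\,\MA_E(f)\otimes\vol_{E^\perp}$ are routine.
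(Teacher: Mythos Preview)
Your proposal is correct and follows the same approach as the paper, which simply states that the corollary follows directly from the $\mathrm{O}(n)$-equivariance of the real Monge--Amp\`ere operator without writing out any details. You have unpacked precisely the computation the paper leaves implicit, namely that $\pi_E\circ g=g\circ\pi_{g^{-1}E}$ for $g\in\mathrm{O}(n)$ together with the isometry equivariance of $\MA$ and $\vol$ yields $\KL_{g\cdot\Psi}(E)=\KL_\Psi(g^{-1}E)$, while injectivity comes from Proposition~\ref{proposition:KlainEmbedding}.
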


\subsection{$\SO(n)$- and $\GL(n,\R)$-equivariant valuations}
\label{section:HadwigerTypeResult}
 Let us restate Theorem~\ref{maintheorem:HadwigerSOn} in the following way.
\begin{theorem}
	\label{theorem:HadwigerSOn}
	The space of $\SO(n)$-equivariant valuations in $\MAVal_k(\R^n)$ is $1$-dimensional and spanned by the $k$th Hessian measure.
\end{theorem}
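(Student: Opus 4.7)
The plan is to use the injection $\KL : \MAVal_k(\R^n) \to C(\Gr_k(\R^n))$ from Proposition~\ref{proposition:KlainEmbedding} together with the transitivity of the $\SO(n)$-action on $\Gr_k(\R^n)$ to obtain an upper bound on the dimension, and then exhibit the $k$th Hessian measure as a non-zero $\SO(n)$-equivariant element to match this bound from below. By Corollary~\ref{corollary:EquivKlainFunction}, $\Psi \in \MAVal_k(\R^n)$ is $\SO(n)$-equivariant if and only if $\KL_\Psi$ is $\SO(n)$-invariant. Since $\SO(n)$ acts transitively on $\Gr_k(\R^n)$ for every $0 \le k \le n$, the only $\SO(n)$-invariant continuous functions on $\Gr_k(\R^n)$ are the constants, so injectivity of $\KL$ bounds the dimension of the $\SO(n)$-equivariant subspace of $\MAVal_k(\R^n)$ by one.

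For the lower bound, I would construct the $k$th Hessian measure via Theorem~\ref{theorem:measuresDifferentialCycle}. Consider the constant form
\[
\tau_k := \sum_{|I|=k} \epsilon_I \, dx_{I^c} \wedge dy_I \ \in\ \Lambda^{n-k,k},
\]
where the sum runs over $k$-subsets $I \subset \{1,\dots,n\}$ with complement $I^c$, and $\epsilon_I \in \{\pm 1\}$ is the sign of the shuffle bringing $(I^c, I)$ into the order $(1,\dots,n)$. As $\tau_k$ is constant on $T^*\R^n$, it is invariant under translations in both factors, so Theorem~\ref{theorem:measuresDifferentialCycle} yields $\Psi_{\tau_k} \in \MAVal_k(\R^n)$. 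Pulling $\tau_k$ back along $x \mapsto (x, df(x))$ for $f \in \Conv(\R^n,\R) \cap C^2(\R^n)$, and using that $dy_i$ restricts to $\sum_j f_{ij}\, dx_j$ on the graph, a direct computation gives
\[
\Psi_{\tau_k}(f)[B] = \int_B \sum_{|I|=k} \det\bigl((D^2 f(x))_{I,I}\bigr)\, d\vol_n(x) = \int_B [D^2 f(x)]_k \, d\vol_n(x)
\]
for every bounded Borel set $B \subset \R^n$, so $\Psi_{\tau_k}$ coincides with the $k$th Hessian measure on $C^2$-functions.

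It remains to verify equivariance and non-vanishing. The polynomial $A \mapsto [A]_k$ on symmetric matrices depends only on the eigenvalues of $A$, hence is $\mathrm{O}(n)$-invariant; since $\tau_k$ is the (up to scale) unique constant $(n-k,k)$-form whose pullback along graphs of gradients produces this polynomial, $\tau_k$ is $\mathrm{O}(n)$-invariant as an element of $\Lambda^{n-k,k}$, and Corollary~\ref{corollary:equivarianceMeasuresFromDifferentialForms} then shows $\Psi_{\tau_k}$ is $\mathrm{O}(n)$-equivariant, in particular $\SO(n)$-equivariant. Testing on $f(x) = \tfrac12 |x|^2$ gives $D^2 f \equiv I_n$ and hence $\Psi_{\tau_k}(f) = \binom{n}{k}\vol_n \ne 0$. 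Combined with the upper bound, this identifies the $\SO(n)$-equivariant subspace of $\MAVal_k(\R^n)$ as one-dimensional and spanned by the $k$th Hessian measure. The only real work is the combinatorial pullback computation, which reduces to the classical identity $\sum_{|I|=k} \det((D^2 f)_{I,I}) = [D^2 f]_k$ for symmetric matrices, together with the accompanying sign bookkeeping; these, and the identification of $\tau_k$ as an $\mathrm{O}(n)$-invariant element of $\Lambda^{n-k,k}$, constitute the expected main obstacle, but both are routine once a sign convention is fixed.
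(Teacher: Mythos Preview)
Your argument matches the paper's: injectivity and $\mathrm{O}(n)$-equivariance of $\KL$ (Proposition~\ref{proposition:KlainEmbedding}, Corollary~\ref{corollary:EquivKlainFunction}) together with transitivity of $\SO(n)$ on $\Gr_k(\R^n)$ give the upper bound, and the $k$th Hessian measure supplies the non-trivial element---the paper simply asserts this last fact, whereas you construct it explicitly via $\tau_k$. One quibble: your uniqueness claim for $\tau_k$ within all of $\Lambda^{n-k,k}$ is false, since any form differing from $\tau_k$ by a multiple of $\omega_s$ has the same pullback to Lagrangian graphs (uniqueness holds only among primitive forms, cf.\ Lemma~\ref{lemma:injectivityPolySymKSym2}); but you do not actually need $\tau_k$ itself to be $\mathrm{O}(n)$-invariant---$\SO(n)$-equivariance of $\Psi_{\tau_k}$ follows directly from your integral formula via $[g^{T}D^2f(gx)\,g]_k=[D^2f(gx)]_k$ for $g\in\SO(n)$, the change of variables with Jacobian $1$, and density of $C^2$-functions.
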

\begin{proof}
	Let $\Psi\in\MAVal_k(\R^n)$ be $\SO(n)$-equivariant. By Corollary~\ref{corollary:EquivKlainFunction}, $\KL_\Psi\in C(\Gr_k(\R^n))$ is $\SO(n)$-invariant. As $\SO(n)$ operates transitively on $\Gr_k(\R^n)$, $\KL_\Psi$ is constant. However, $\Psi$ is uniquely determined by $\KL_\Psi$ by Proposition~\ref{proposition:KlainEmbedding}, so this space is at most $1$-dimensional. As the $k$th Hessian measure defines a non-trivial element in this space, we obtain the desired result.
\end{proof}
Let us say that $\Psi\in\MAVal_k(\R^n)$ is $\GL(n,\R)$-equivariant with weight $q\in\R$ if 
\begin{align*}
	g\cdot \Psi=|\det\nolimits_\R(g)|^q\Psi\quad\text{for all }g\in\GL(n,\R).
\end{align*}
\begin{theorem}
	\label{theorem:ClassificationGLnREquiv}
	$\Psi\in\MAVal_k(\R^n)$ is $\GL(n,\R)$-equivariant with weight $q\in \R$ if and only if
	\begin{itemize}
		\item $k=0$, $q=-1$ and $\Psi$ is a multiple of the Lebesgue measure, or
		\item $k=n$, $q=1$ and $\Psi$ is a multiple of the real Monge-Amp\`ere operator.
	\end{itemize}
\end{theorem}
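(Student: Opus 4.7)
The strategy is to combine the Hadwiger-type classification of $\SO(n)$-equivariant valuations (Theorem~\ref{theorem:HadwigerSOn}) with a direct computation of how the Hessian measures transform under $\GL(n,\R)$. Since $\SO(n)\subset\GL(n,\R)$ and $|\det g|=1$ for $g\in\SO(n)$, any $\GL(n,\R)$-equivariant valuation $\Psi\in\MAVal_k(\R^n)$ is automatically $\SO(n)$-equivariant, so Theorem~\ref{theorem:HadwigerSOn} forces $\Psi=c\cdot H_k$ for some $c\in\C$, where $H_k$ denotes the $k$th Hessian measure. The problem is thereby reduced to asking: for which $k$ and which weight $q$ is $H_k$ itself $\GL(n,\R)$-equivariant?

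The two boundary cases are then verified directly. For $k=0$, Lemma~\ref{lemma:0homMA} gives $H_0=\vol_n$, and a change of variables yields $[g\cdot H_0](f)[B]=\vol_n(g^{-1}B)=|\det g|^{-1}\vol_n(B)$, i.e.\ weight $q=-1$. For $k=n$, one has $H_n=\MA$, and for $f\in\Conv(\R^n,\R)\cap C^2(\R^n)$ the identity $D^2(f\circ g)(x)=g^T D^2f(gx)g$ together with the substitution $y=gx$ gives $[g\cdot\MA](f)[B]=|\det g|\,\MA(f)[B]$, i.e.\ weight $q=1$. The passage from smooth to arbitrary $f\in\Conv(\R^n,\R)$ is handled by weak-$*$ continuity of $\Psi$ and density of smooth convex functions.

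The main step, and the one I expect to be the technical core, is excluding the intermediate range $0<k<n$. Apply $\Psi=c\cdot H_k$ to the quadratic $f(x)=\tfrac{1}{2}|x|^2$. Since $D^2f\equiv I$, one has $H_k(f)=\binom{n}{k}\vol_n$; on the other hand $(f\circ g)(x)=\tfrac{1}{2}\langle g^Tgx,x\rangle$, so $D^2(f\circ g)\equiv g^Tg$, and a change of variables produces
\[
[g\cdot H_k](f)[B]\;=\;[g^Tg]_k\cdot|\det g|^{-1}\vol_n(B).
\]
Equivariance with weight $q$ would thus require $[g^Tg]_k=|\det g|^{q+1}\binom{n}{k}$ identically in $g\in\GL(n,\R)$. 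Specializing to $g=\mathrm{diag}(t,1,\dots,1)$ with $t>0$ gives
\[
\binom{n-1}{k}+t^{2}\binom{n-1}{k-1}\;=\;t^{q+1}\binom{n}{k},
\]
which fails for every $q$ when $0<k<n$, since both binomial coefficients on the left are strictly positive so the left-hand side is not a monomial in $t$. Hence $c=0$ in the intermediate range, and only the Lebesgue measure (for $k=0$) and the real Monge-Amp\`ere operator (for $k=n$) survive, with the stated weights. No conceptual obstruction is anticipated beyond this elementary polynomial comparison; the Hadwiger-type reduction provided by Theorem~\ref{theorem:HadwigerSOn} carries the structural weight of the argument.
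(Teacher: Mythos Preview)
Your proof is correct and follows the same overall strategy as the paper: use Theorem~\ref{theorem:HadwigerSOn} to reduce any $\GL(n,\R)$-equivariant $\Psi$ to a multiple of the $k$th Hessian measure, verify the endpoints $k=0$ and $k=n$ directly, and then test against suitable elements of $\GL(n,\R)$ to exclude $0<k<n$. The only difference lies in the exclusion step. The paper argues in two stages, first using $g=t\,\mathrm{Id}$ to rule out $q=-1$, and then a partial dilation on $\R^{k}\times\R^{n-k}$ together with the Klain-type restriction of Proposition~\ref{proposition:KlainEmbedding} to rule out every $q\neq-1$. You instead apply the single one-parameter family $g=\mathrm{diag}(t,1,\dots,1)$ to the test function $\tfrac{1}{2}|x|^2$ and read off an identity $\binom{n-1}{k}+t^{2}\binom{n-1}{k-1}=t^{q+1}\binom{n}{k}$ that cannot hold for any $q$ when $0<k<n$. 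This is a bit more economical and entirely self-contained, since it avoids Proposition~\ref{proposition:KlainEmbedding}; the paper's version, on the other hand, illustrates how the Klain embedding controls the behaviour under partial dilations.
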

\begin{proof}
	Obviously, the Lebesgue measure and the real Monge-Amp\`ere operator are $\GL(n,\R)$-equivariant with the given weights.\\
	
	Note that any $\GL(n,\R)$-equivariant $\Psi\in\MAVal_k(\R^n)$ is $\SO(n)$-equivariant and thus a multiple of the Hessian measure $\Phi_k$ by Theorem~\ref{theorem:HadwigerSOn}. We will thus show that none of these measures is $\GL(n,\R)$-equivariant with respect to any weight for $1\le k\le n-1$.\\
	
	Note that $(t Id)\cdot\Phi_k=t^{-(n-2k)}\Phi_k$ for $t>0$. In particular, $\Phi_k$ is not $\GL(n,\R)$-equivariant with weight $q=-1$. Split $\R^n=\R^{k}\times\R^{n-k}$ and let $g_t$ denote the multiplication with $t>0$ on the second factor. If $f\in \Conv(\R^{k},\R)$ and $\pi:\R^{k}\times \R^{n-k}\rightarrow\R^{k}$ is the projection onto the first factor, then $[\pi^*f]\circ g_t=\pi^*f$, and Proposition~\ref{proposition:KlainEmbedding} thus implies for Borel sets $A\subset\R^{k},B\subset \R^{n-k}$
	\begin{align*}
		[g_t\cdot \Phi_{k}](\pi^*f)[A\times B]=&\Phi_{k}([\pi^*f]\circ g_t)[g_t^{-1}(A\times B)]\\
		=&\Phi_{k}(\pi^*f_t)[g_t^{-1}(A\times B)]\\
		=&(\KL_{\Phi_{k}}\MA_{\R^{k}}(f)\otimes\vol_{n-k})[g^{-1}_t(A\times B)]\\
		=&t^{-(n-k)}(\KL_{\Phi_{k}}\MA_{\R^{k}}(f)\otimes\vol_{n-k})[A\times B]\\
		=&t^{-(n-k)}\Phi_{k}(\pi^*f)[A\times B].
	\end{align*}
	As $\det(g_t)=t^{n-k}$, $\Phi_k$ cannot be $\GL(n,\R)$-equivariant with respect to any weight $q\ne -1$. Thus, $\Phi_k$ is not $\GL(n,\R)$-equivariant with respect to any weight, which finishes the proof.
\end{proof}

\section{Characterization of $\MAVal_k(\R^n)$}
\label{section:characterization}
\subsection{Polynomials derived from primitive differential forms}
In this section we will discuss certain polynomials related to constant differential forms and $(k\times k)$-minors of symmetric matrices.\\

\begin{definition}
	Let $P\Lambda^{n-k,k}\subset\Lambda^{n-k}\R^n\otimes\Lambda^{k}(\R^n)^*$ denote the subspace of  complex-valued \emph{primitive} differential forms, that is, all $\tau\in \Lambda^{n-k}\R^n\otimes\Lambda^{k}(\R^n)^*$ such that $\omega_s\wedge\tau=0$, where $\omega_s$ denotes the natural symplectic form on $\R^n\times(\R^n)^*$.
\end{definition}
Due to the Lefschetz decomposition (see, for example, \cite[Proposition~1.2.30]{HuybrechtsComplexgeometry2005}), any differential form on $T^*\R^n$ can be uniquely written as a sum of a primitive differential form and a multiple of the symplectic form. For our purposes, it is thus sufficient to only consider primitive differential forms due to the fact that the differential cycle vanishes on multiples of the symplectic form, compare Theorem~\ref{theorem:characterization_differential_cycle}.\\

We extend elements of $\Lambda^{n-k,k}(\R^n\times(\R^n)^*)$ by $\C$-linearity to elements of $\Lambda^{n-k,k}(\C^n\times(\C^n)^*)$. In other words, we identify $\Lambda^{n-k,k}(\R^n\times(\R^n)^*)$ with the space of constant holomorphic forms on $\C^n\times(\C^n)^*$. $\GL(n,\C)$ operates on $\Lambda^{n-k,k}(\C^n\times(\C^n)^*)$ via the diagonal action on $\C^n\times (\C^n)^*$, that is,
\begin{align*}
	g\cdot \tau:=((g^{-1})^\#)^*\tau\quad \text{for}~\tau\in\Lambda^*(\C^n\times(\C^n)^*),~g\in\GL(n,\C),
\end{align*}
where $g^\#:\C^n\times(\C^n)^*\rightarrow\C^n\times(\C^n)^*$ is given by $g^\#(z,w)=(g^{-1}z,g^*w)$. Note that $\omega_s$ corresponds to a $\GL(n,\C)$-invariant holomorphic form. This implies the following.
\begin{lemma}
	$P\Lambda^{n-k,k}\subset\Lambda^n(\C^n\times(\C^n)^*)$ is a $\GL(n,\C)$-invariant subspace and thus a representation of $\GL(n,\C)$.
\end{lemma}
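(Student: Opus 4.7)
The plan is to reduce the statement to a single observation: that the symplectic form $\omega_s$ is $\GL(n,\C)$-invariant, and therefore the defining equation $\omega_s\wedge\tau=0$ of $P\Lambda^{n-k,k}$ is preserved by the action. Note that the space $\Lambda^{n-k,k}$ is already $\GL(n,\C)$-invariant by construction, so only the primitivity condition must be checked.

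First I would verify that $\omega_s$ is fixed by $\GL(n,\C)$. With coordinates $(z,w)\in\C^n\times(\C^n)^*$ and $\omega_s=\sum_{i=1}^n dw_i\wedge dz_i$ corresponding to the canonical pairing, the action $g^\#(z,w)=(g^{-1}z,g^*w)$ sends this pairing to $\langle g^*w,g^{-1}z\rangle=\langle w,gg^{-1}z\rangle=\langle w,z\rangle$. Therefore $(g^\#)^*\omega_s=\omega_s$ for every $g\in\GL(n,\C)$, so in particular $g\cdot\omega_s=((g^{-1})^\#)^*\omega_s=\omega_s$.

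Next, I would exploit that pullback is an algebra homomorphism with respect to the wedge product. For $\tau\in P\Lambda^{n-k,k}$ and $g\in\GL(n,\C)$,
\begin{align*}
    \omega_s\wedge(g\cdot\tau)=(g\cdot\omega_s)\wedge(g\cdot\tau)=g\cdot(\omega_s\wedge\tau)=g\cdot 0=0,
\end{align*}
so $g\cdot\tau\in P\Lambda^{n-k,k}$. This shows the invariance, and since the $\GL(n,\C)$-action on $\Lambda^{n-k,k}$ is already a (holomorphic) representation, its restriction to the invariant subspace $P\Lambda^{n-k,k}$ is again a representation.

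There is no real obstacle here; the only point to be careful about is the normalization/orientation conventions used in defining $g^\#$ and the action $g\cdot\tau=((g^{-1})^\#)^*\tau$, which must match so that this is indeed a left action. Once the invariance of $\omega_s$ is confirmed for these specific conventions, the rest is a one-line computation.
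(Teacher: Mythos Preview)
Your proposal is correct and follows essentially the same approach as the paper: the paper merely notes that $\omega_s$ corresponds to a $\GL(n,\C)$-invariant holomorphic form and states that the lemma follows, while you have spelled out the invariance of $\omega_s$ explicitly and made the one-line wedge computation precise.
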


We will associate two different types of polynomials to elements of $P\Lambda^{n-k,k}$ related to $(k\times k)$-minors of different types of complex matrices. Let us first introduce some notation. For a complex vector space $V$, we denote by $\P^k(V)$ the space of $k$-homogeneous polynomials on $V$. Let us denote the space of symmetric $(n\times n)$-matrices by $SM_n$. If we let $\GL(n,\C)$ operate on $SM_n$ by $(g,A)\mapsto (g^T)^{-1}Ag^{-1}$ for $g\in\GL(n,\C)$, $A\in SM_n$, then $SM_n$ is isomorphic as a $\GL(n,\C)$-module to the space of symmetric bilinear forms on $\C^n$. More precisely, we identify $A\in SM_n$ with the bilinear form $(z,w)\mapsto z^TAw$ on $\C^n$. Let $\mathcal{M}_k\subset \P^k(SM_n)$ denote the space spanned by the $(k\times k)$-minors. This is a $\GL(n,\C)$-invariant subspace. We need the following result.
\begin{lemma}
	\label{lemma:kMinorsIrreducible}
	$\mathcal{M}_k$ is an irreducible representation of $\GL(n,\C)$.
\end{lemma}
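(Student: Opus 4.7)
The plan is to identify $\mathcal{M}_k$ as the irreducible $\GL(n,\C)$-representation of highest weight $2(e_1+\cdots+e_k)$ (i.e.\ the Schur module associated with the rectangular partition $(2^k)$). By the general theory of highest-weight representations of reductive groups, it suffices to exhibit a vector $\Delta \in \mathcal{M}_k$ which is a highest weight vector, and to show that $\Delta$ generates $\mathcal{M}_k$ as a $\GL(n,\C)$-module.

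For the first step, I take $\Delta(A) := \det\bigl((a_{ij})_{1\le i,j\le k}\bigr)$, the upper-left $k\times k$ principal minor. A short computation shows that under the action $A\mapsto (g^T)^{-1}Ag^{-1}$, a diagonal torus element $g=\mathrm{diag}(g_1,\dots,g_n)$ acts on the coordinate $a_{ij}$ by $a_{ij}\mapsto g_ig_j\,a_{ij}$, so $a_{ij}$ has $T$-weight $e_i+e_j$. Every monomial in the expansion of $\Delta$ therefore has weight $2(e_1+\cdots+e_k)$, a dominant weight. For the annihilation by the strictly upper-triangular part of $\mathfrak{gl}_n$, note that $\Delta$ depends only on coordinates $a_{pq}$ with $p,q\le k$, and the explicit form of the infinitesimal action (which on polynomials is a derivation raising indices) then gives the required vanishing on a diagram.

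For the generation step, I apply the Cauchy--Binet formula to $g\cdot \Delta$: for $g\in\GL(n,\C)$,
\begin{align*}
(g\cdot\Delta)(A)=\det\bigl((g^T A g)_{i,j\le k}\bigr)=\sum_{|I|=|J|=k} P_I(g)\,P_J(g)\,M_{IJ}(A),
\end{align*}
where $P_I(g)$ denotes the minor of $g$ with rows indexed by $I$ and columns $\{1,\dots,k\}$. Taking $g$ to be a permutation matrix sending $\{1,\dots,k\}$ to $I$ recovers every principal minor $M_{II}$. To obtain the non-principal minors $M_{IJ}$ with $I\ne J$, I vary $g$ along a one-parameter family such as $g_t=\mathrm{id}+tE_{ab}$ with $a\ne b$, and extract the coefficient of $t$ in the expansion of $g_t\cdot\Delta$; an elementary computation using the first-order expansion $\det(X_0+tY)=\det X_0+t\,\mathrm{tr}(\mathrm{adj}(X_0)\,Y)+O(t^2)$ identifies this linear term, up to a nonzero factor, with a specific $M_{IJ}$. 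Since the linear span of the $\GL(n,\C)$-orbit of $\Delta$ is closed under taking such tangent vectors, iterating this yields every $M_{IJ}$.

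Combining the two steps, the $\GL(n,\C)$-submodule of $\mathcal{M}_k$ generated by $\Delta$ is cyclic with a highest weight generator, hence an irreducible Schur module, and by Step~2 this submodule exhausts $\mathcal{M}_k$. The principal obstacle is the non-principal case in Step~2: naively, one only sees products of Plücker coordinates $P_I(g)P_J(g)$ among the Cauchy--Binet coefficients, and these do not span $S^2\Lambda^k\C^n$ as a plain linear subspace. The one-parameter/differentiation argument circumvents this by exploiting that the $\GL(n,\C)$-submodule generated by $\Delta$ contains all derivatives of orbit elements; alternatively, one could invoke the plethysm decompositions of $S^k(S^2\C^n)$ and $S^2(\Lambda^k\C^n)$, whose only common irreducible summand is $S^{(2^k)}\C^n$, to conclude that the $\GL(n,\C)$-equivariant surjection $S^2(\Lambda^k\C^n)\twoheadrightarrow \mathcal{M}_k$ has irreducible image.
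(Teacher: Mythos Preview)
Your overall strategy is sound and different from the paper's. Both proofs identify the $k$th principal minor $\Delta$ as a highest weight vector, but the paper then invokes as a black box that $\mathcal{P}(SM_n)$ is a multiplicity-free $\GL(n,\C)$-module (Goodman--Wallach), so that the irreducible summands of $\mathcal{M}_k$ are detected by which products of principal minors lie in $\mathcal{M}_k$; a short weight argument then shows that only $\Delta$ itself qualifies. Your route avoids multiplicity-freeness entirely and instead argues that $\mathcal{M}_k$ is cyclic on the highest weight vector $\Delta$, hence irreducible. This is more hands-on and self-contained; the paper's argument is shorter but imports a substantial structural result.

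There is, however, a genuine looseness in your main line for Step~2. A single application of $E_{ab}$ to $\Delta$ does give (up to a nonzero scalar) the minor $M_{[k],\,([k]\setminus\{b\})\cup\{a\}}$, so one differentiation works as you claim. But when you ``iterate'', applying $E_{a'b'}$ to a non-principal $M_{I,J}$ with $b'\in I\cap J$ produces a \emph{sum} of two minors (one from the row action and one from the column action), and these generally have the same torus weight, so they cannot be separated by further weight considerations. For instance, for $k=3$, $n\ge 5$, one finds
\[
E_{5,2}E_{4,3}\cdot\Delta \;\propto\; M_{\{1,2,3\},\{1,4,5\}}+M_{\{1,2,4\},\{1,3,5\}},
\]
and no single second-order derivative of $\Delta$ isolates $M_{\{1,2,3\},\{1,4,5\}}$. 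The reason the argument still goes through is subtle: there are Pl\"ucker-type linear relations among the $M_{IJ}$'s \emph{as functions on $SM_n$} (exactly the kernel of $S^2(\Lambda^k\C^n)\twoheadrightarrow\mathcal{M}_k$), and modulo these relations your combinations do span each weight space. But you have not shown this directly, so ``iterating this yields every $M_{IJ}$'' is not justified as written.

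Your plethysm alternative, by contrast, is a complete and clean argument and is really what closes the gap. The surjection $S^2(\Lambda^k\C^n)\twoheadrightarrow\mathcal{M}_k$ together with the inclusion $\mathcal{M}_k\subset S^k(S^2\C^n)$ forces every irreducible constituent of $\mathcal{M}_k$ to occur in both $S^2(\Lambda^k\C^n)=\bigoplus_{i\ge 0}S^{(2^{k-2i},1^{4i})}\C^n$ and $S^k(S^2\C^n)=\bigoplus_{\lambda\vdash k}S^{2\lambda}\C^n$; the only common summand is $S^{(2^k)}\C^n$, which you have already exhibited inside $\mathcal{M}_k$ via $\Delta$. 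I would promote this from a parenthetical remark to the main argument for Step~2, and either drop the differentiation approach or present it only as motivation.
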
 
\begin{proof}
	It is well known that the space of all polynomials on $SM_n$ is a multiplicity free representation of $\GL(n,\C)$, see for example \cite[Theorem~5.7.1]{GoodmanWallachSymmetryrepresentationsinvariants2009}. Note that $\mathcal{M}_k$ is a rational representation of $\GL(n,\C)$ and thus decomposes into a direct sum of irreducible representations. Such a representation is uniquely determined by its highest weight vector and it is easy to see that the $k$th principal minor is a highest weight vector, see for example the proof of \cite[Theorem~5.7.3]{GoodmanWallachSymmetryrepresentationsinvariants2009}. Using permutation matrices, we see that the orbit of the $k$th principal minor spans $\mathcal{M}_k$. Thus $\mathcal{M}_k$ is an irreducible representation of $\GL(n,\C)$.
\end{proof}
The following result is well known, but we include a proof for the convenience of the reader.
\begin{lemma}
	\label{lemma:VanishingIdealSmallRank}
	The ideal $I_k\subset\mathcal{P}(SM_n)$ of all polynomials vanishing on matrices of rank strictly smaller than $k$ is prime and generated by $\mathcal{M}_k$.
\end{lemma}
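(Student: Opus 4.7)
The plan splits the statement into its two parts: primality of $I_k$, and the equality $I_k = (\mathcal{M}_k)$.

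For primality, set $V_k := \{A \in SM_n : \operatorname{rank}(A) < k\}$, so that $I_k = I(V_k)$. The standard approach is to exhibit $V_k$ as the image of an irreducible variety under a polynomial map. Concretely, consider
\begin{align*}
\phi \colon M_{n \times (k-1)}(\C) \to SM_n, \quad B \mapsto B B^T.
\end{align*}
Over $\C$, any symmetric matrix of rank at most $r$ can be written as $B B^T$ with $B$ of size $n \times r$ (a complex quadratic form diagonalises to a sum of squares), so $\phi$ surjects onto $V_k$. Since $M_{n \times (k-1)}(\C)$ is an irreducible affine space and $V_k$ is Zariski-closed (it is the common zero locus of $\mathcal{M}_k$), $V_k$ is irreducible and $I_k = I(V_k)$ is prime.

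For the claim that $I_k$ is generated by $\mathcal{M}_k$, the inclusion $(\mathcal{M}_k) \subseteq I_k$ is immediate since $k \times k$ minors vanish on matrices of rank $< k$. For the reverse inclusion, the natural approach is representation-theoretic. Both $(\mathcal{M}_k)$ and $I_k$ are $\GL(n, \C)$-invariant ideals of $\mathcal{P}(SM_n) = S^\bullet(S^2 \C^n)$, and this ring is multiplicity-free as a $\GL(n, \C)$-module (classical, e.g.\ via $(\GL_n, O_m)$ Howe duality); explicitly, $S^\bullet(S^2 \C^n) = \bigoplus_\lambda V_{2\lambda}$ where $\lambda$ ranges over partitions with $\ell(\lambda) \leq n$. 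Hence any invariant subspace is the direct sum of its isotypic components, and matching highest weights suffices. The parametrisation $\phi$ above identifies the coordinate ring of $V_k$ with the summands having $\ell(\lambda) \leq k-1$, so $I_k = \bigoplus_{\ell(\lambda) \geq k} V_{2\lambda}$. By Lemma~\ref{lemma:kMinorsIrreducible}, $\mathcal{M}_k \cong V_{(2^k)}$, and a Pieri-type computation for the multiplication in $S^\bullet(S^2 \C^n)$ then shows that $(\mathcal{M}_k)$ contains each $V_{2\lambda}$ with $\ell(\lambda) \geq k$, yielding equality.

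The main obstacle is the Pieri-type calculation in the second step: while the representation-theoretic framework is clean in outline, identifying the precise isotypic content of $(\mathcal{M}_k)$ requires some work. A cleaner alternative is to invoke the classical theorem on ideals of minors of symmetric matrices (De Concini--Procesi, J\'ozefiak--Pragacz--Weyman), which gives exactly this statement; the primality argument via the parametrisation $B \mapsto BB^T$ is standard and can then be retained as the first half of the proof.
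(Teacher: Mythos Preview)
Your proposal is correct, but both halves take a different route from the paper.

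For primality, you argue via algebraic geometry: parametrise $V_k$ by $B\mapsto BB^T$ from the irreducible affine space $M_{n\times(k-1)}(\C)$, conclude $V_k$ is irreducible, and invoke the Nullstellensatz to get that $I(V_k)$ is prime. The paper instead gives a direct elementary argument: given $p,q\notin I_k$ with $pq\in I_k$, pick witnesses $A,B$ of rank $\le k-1$ with $p(A)\ne 0$, $q(B)\ne 0$, write each as a sum of $k-1$ rank-one terms, and interpolate between them by a one-parameter family $C(t)$ that stays in the rank-$\le k-1$ locus. Then $p(C(t))q(C(t))$ is a nonzero polynomial in $t$, contradicting $pq\in I_k$. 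Your argument is the standard conceptual one; the paper's avoids the Nullstellensatz entirely and is self-contained.

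For generation, both arguments are representation-theoretic and use the multiplicity-free decomposition of $\mathcal{P}(SM_n)$, but the paper is more concrete where you defer to a Pieri computation or to De Concini--Procesi / J\'ozefiak--Pragacz--Weyman. The paper simply observes that the highest weight vectors are products of principal minors; such a product lies in $I_k$ only if at least one factor has size $\ge k$ (test on a diagonal matrix of rank $k-1$), and any $(l\times l)$-minor with $l\ge k$ lies in $(\mathcal{M}_k)$ by Laplace expansion. This replaces your Pieri step with a two-line argument, so you may prefer it if you want the proof to be self-contained rather than citing the classical literature.
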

\begin{proof}
	Note that $I_k$ is a $\GL(n,\C)$-invariant subspace of $\mathcal{P}(SM_n)$. As $\mathcal{P}(SM_n)$ is a rational  representation, $I_k$ decomposes into a direct sum of irreducible subspaces, which are each generated by the orbit of their highest weight vector. These vectors are given by products of principal minors, compare the proof of \cite[Theorem~5.7.3]{GoodmanWallachSymmetryrepresentationsinvariants2009}. Consequently, the highest weight vectors corresponding to the irreducible components of $I_k$ are divisible by a principle minor of size $l\times l$ where $l\ge k$. As any $(l\times l)$-minor, $l>k$, is contained in the ideal generated by $(k\times k)$-minors, all highest weight vectors in $I_k$ belong to the ideal generated by $\mathcal{M}_k$, and consequently, the same holds for the space spanned by their orbits, which is $I_k$.\\
	
	To see that $I_k$ is prime, assume that $p,q\in\mathcal{P}(SM_n)\setminus I_k$ satisfy $pq\in I_k$. Then there exist $A,B\in SM_n$ of rank at most $k-1$ such that $p(A)\ne 0$ and $q(B)\ne 0$. We can write \begin{align*}
		&A=\sum_{i=1}^{k-1}\lambda_i a_i\cdot a_i^T, &&B=\sum_{i=1}^{k-1}\eta_i b_i\cdot b_i^T
	\end{align*}
	for $\lambda_i,\eta_i\in\C$, $a_i,b_i\in \C^n$, $1\le i\le k-1$. For $t\in\C$ set \begin{align*}
		C(t):=\sum_{i=1}^{k-1}(t\lambda_i+(1-t)\eta_i) (ta_i+(1-t)b_i)\cdot (ta_i+(1-t)b_i).
	\end{align*} Then $p(C(t))$ and $q(C(t))$ are polynomials in $t$ that do not vanish identically. Consequently, there is $t_0\in \C$ such that $p(C(t_0))q(C(t_0))\ne 0$, which is a contradiction as the rank of $C(t)$ is at most $k-1$. Thus either $p\in I_k$ or $q\in I_k$.
\end{proof}

For $Q\in SM_n$ we let $Q(z):=\frac{1}{2}z^TQz$ denote the quadratic polynomial on $\C^n$ obtained from the corresponding quadratic form. In particular, we obtain a map 
\begin{align*}
	F_Q:\C^n\rightarrow& \C^n\times(\C^n)^*\\
	z\mapsto& (z,\partial Q(z)),	
\end{align*} where $\partial Q(z)$ denotes the holomorphic differential of $Q(z)$. In other words, $F_Q(z)=(z,z^TQ)$.
Let us denote the coordinates of $\C^n\times(\C^n)^*$ by $(z,w)$. As $\partial Q(z)$ is complex linear in $z$, $F_Q^*dw_i$ is again a constant differential form. This implies the following result.
\begin{lemma}
	For every $\tau\in P\Lambda^{n-k,k}$ there exists a unique polynomial $P_\tau\in \P^k(SM_n)$ such that
	\begin{align*}
		F_Q^*\tau=P_\tau(Q)dz_1\wedge\dots\wedge dz_n.
	\end{align*}
\end{lemma}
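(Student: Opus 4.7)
The plan is essentially a direct coordinate calculation, together with the observation that the resulting coefficient is polynomial in the entries of $Q$. Writing $F_Q(z) = (z, z^T Q)$ in coordinates $(z_1,\dots,z_n,w_1,\dots,w_n)$ on $\C^n\times(\C^n)^*$, we have $w_i\circ F_Q = \sum_{j} q_{ij}z_j$, using the symmetry $q_{ij}=q_{ji}$. Pulling back the coordinate differentials therefore gives the constant forms
\begin{align*}
F_Q^*dz_i = dz_i, \qquad F_Q^*dw_i = \sum_{j=1}^n q_{ij}\, dz_j.
\end{align*}
In particular, both pullbacks are independent of $z$, and the second one is $\C$-linear in the entries of $Q$.

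Next I would write an arbitrary $\tau\in P\Lambda^{n-k,k}$ as a $\C$-linear combination of monomials $dz_I\wedge dw_J$ with $|I|=n-k$ and $|J|=k$. Applying $F_Q^*$ monomial-by-monomial, each $dw_{j_l}$ contributes one factor $\sum_m q_{j_l m}dz_m$, and the wedge product of $k$ such linear factors produces a sum of constant $n$-forms on $\C^n$, each of whose coefficients is a homogeneous polynomial of degree $k$ in the entries of $Q$. Since the space of constant $n$-forms on $\C^n$ is one-dimensional and spanned by $dz_1\wedge\cdots\wedge dz_n$, we may collect everything and write
\begin{align*}
F_Q^*\tau = P_\tau(Q)\, dz_1\wedge\cdots\wedge dz_n,
\end{align*}
with $P_\tau\in\P^k(SM_n)$. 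Uniqueness of $P_\tau$ is immediate from the non-vanishing of $dz_1\wedge\cdots\wedge dz_n$.

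There is no real obstacle here: the statement is a bookkeeping observation once one notices that $F_Q$ is linear both in $z$ and in $Q$. The primitivity hypothesis on $\tau$ is not needed for this existence-and-uniqueness claim; indeed, a direct computation shows that $F_Q^*\omega_s = \sum_{i,j} q_{ij}\,dz_j\wedge dz_i = 0$ by symmetry of $Q$, so multiples of $\omega_s$ automatically give $P_\tau=0$. Thus the relevant role of primitivity will presumably only appear in a subsequent step, where one wishes the assignment $\tau\mapsto P_\tau$ to be injective on $P\Lambda^{n-k,k}$ (so that via the Lefschetz decomposition one obtains a clean identification with a subspace of $\mathcal{M}_k$), and where $\GL(n,\C)$-equivariance of this assignment will be combined with Lemma~\ref{lemma:kMinorsIrreducible} to pin down the image.
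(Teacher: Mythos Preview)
Your proposal is correct and matches the paper's approach: the paper states the lemma with only the one-line justification that ``$\partial Q(z)$ is complex linear in $z$, so $F_Q^*dw_i$ is again a constant differential form,'' and your argument is precisely the explicit unpacking of that remark. Your additional observation that primitivity is not needed here (since $F_Q^*\omega_s=0$ by symmetry of $Q$) is also correct and anticipates exactly how the paper proceeds: primitivity enters only in the subsequent injectivity lemma.
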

Note that $P_\tau$ vanishes on symmetric matrices of rank less than $k$: Any such matrix may be written as $Q=\sum_{j=1}^{k-1}\lambda_j e_j\cdot e_j^T$ for $e_1,\dots,e_{k-1}\in \C^n$ and $\lambda_1,\dots,\lambda_{k-1}\in \C$. Then it is easy to see that $F_Q^*(dw_{i_1}\wedge\dots\wedge dw_{i_k})=0$ for all $1\le i_1,\dots,i_k\le n$, which implies $F_Q^*\tau=0$.\\

If $Q\in SM_n$ is a real matrix, then $F_Q^*\tau$ may be interpreted as the restriction of the differential form $\tau$ to the graph of the differential $dQ(x)$ of the polynomial $Q(x)=\frac{1}{2}x^TQx$, $x\in\R^n$. Note that the tangent spaces of this graph are all isotropic, that is, the restriction of $\omega_s$ to these subspaces vanishes. Let us show that $\tau$ is uniquely determined by the polynomial $P_\tau$. We will need the following result.
\begin{lemma}[Bernig-Bröcker \cite{BernigBroeckerValuationsmanifoldsRumin2007} Lemma~1.4]
	\label{lemma:primitiveFormVanishing}
	Let $\beta\in \Lambda^k(\R^n\times(\R^n)^*)$ be primitive, $k\le n$. If $\beta$ vanishes on all isotropic $k$-dimensional linear subspaces, then $\beta=0$.
\end{lemma}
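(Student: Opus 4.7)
The strategy is representation-theoretic. The plan rests on the classical fact that, for $k \le n$, the space $P\Lambda^k \subset \Lambda^k(\R^n \times (\R^n)^*)$ of primitive forms is an irreducible representation of $\mathrm{Sp}(2n,\R)$; after complexification, this realizes the fundamental representation of $\mathfrak{sp}(2n,\C)$ with highest weight $\varpi_k = \epsilon_1 + \dots + \epsilon_k$.

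Granting this, the plan is to consider the evaluation map
\begin{align*}
R \colon P\Lambda^k \to \prod_{V \in \mathrm{Gr}^{\mathrm{iso}}_k} \Lambda^k V^*, \qquad \beta \mapsto (\beta|_V)_V,
\end{align*}
where $\mathrm{Gr}^{\mathrm{iso}}_k$ denotes the set of isotropic $k$-dimensional subspaces of $\R^n \times (\R^n)^*$. Since $\mathrm{Sp}(2n,\R)$ preserves $\omega_s$, it acts on $\mathrm{Gr}^{\mathrm{iso}}_k$, and $R$ is $\mathrm{Sp}(2n,\R)$-equivariant. The hypothesis of the lemma asserts precisely that $\beta \in \ker R$. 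Since $\ker R$ is an invariant subspace of the irreducible representation $P\Lambda^k$, it is either $0$ or all of $P\Lambda^k$.

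To rule out the second possibility, I would exhibit one primitive form $\beta_0$ and one isotropic subspace $V_0$ on which it does not vanish. In standard symplectic coordinates $(x_1,\dots,x_n,y_1,\dots,y_n)$ with $\omega_s = \sum_i dx_i \wedge dy_i$, natural choices are $\beta_0 = dx_1 \wedge \dots \wedge dx_k$ and $V_0 = \mathrm{span}(\partial_{x_1}, \dots, \partial_{x_k})$. The dual Lefschetz operator $\Lambda = \sum_i \iota_{\partial_{x_i}}\iota_{\partial_{y_i}}$ manifestly annihilates $\beta_0$, so $\beta_0$ is primitive; $V_0$ is isotropic since $\omega_s(\partial_{x_i}, \partial_{x_j}) = 0$; and $\beta_0(\partial_{x_1}, \dots, \partial_{x_k}) = 1 \neq 0$. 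This forces $\ker R = 0$.

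The main obstacle in the plan is the reliance on irreducibility of $P\Lambda^k$. A self-contained justification would identify $\beta_0$ as a highest weight vector of $P\Lambda^k$ for a diagonal Cartan subalgebra, so that it generates an irreducible subrepresentation, and then match its dimension against the known dimension of $P\Lambda^k$ (computable via the Lefschetz decomposition $\Lambda^k = P\Lambda^k \oplus \omega_s \wedge \Lambda^{k-2}$, valid for $k \le n$). If one prefers to avoid representation theory entirely, a more elementary but less clean route would proceed by induction on $k$, using that a suitable primitive component of the interior product of $\beta$ with a vector transverse to a chosen isotropic subspace remains primitive of degree $k-1$ and still vanishes on all isotropic $(k-1)$-subspaces of the symplectic orthogonal complement.
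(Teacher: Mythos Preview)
The paper does not supply its own proof of this lemma; it is quoted from Bernig--Br\"ocker \cite{BernigBroeckerValuationsmanifoldsRumin2007} as a black box. Your representation-theoretic argument is correct and is essentially the standard route: the irreducibility of $P\Lambda^k$ under $\mathrm{Sp}(2n,\R)$ for $k\le n$ is a classical fact (these are precisely the fundamental representations $V(\varpi_k)$ of $\mathfrak{sp}(2n,\C)$), the evaluation map $R$ is visibly equivariant, and your explicit pair $(\beta_0,V_0)$ shows $\ker R\ne P\Lambda^k$, forcing $\ker R=0$. This is, to my knowledge, also the argument Bernig and Br\"ocker use.

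Your self-assessment of the ``main obstacle'' is fair but perhaps overly cautious: the irreducibility of the primitive subspace is textbook symplectic representation theory (see e.g.\ Fulton--Harris or Goodman--Wallach), and the highest-weight/dimension check you outline is exactly how one verifies it. The alternative inductive sketch you mention at the end would require more care than you indicate---extracting a primitive component of an interior product and controlling its behaviour on isotropic subspaces of a symplectic subquotient is not entirely straightforward---so the representation-theoretic proof is the cleaner choice here.
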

\begin{lemma}
	\label{lemma:injectivityPolySymKSym2}
	The map
	\begin{align*}
		P:P\Lambda^{n-k,k}&\rightarrow\P^k(SM_n)\\
		\tau&\mapsto P_\tau
	\end{align*}
	is injective and $\GL(n,\C)$-equivariant in the following sense: for $g\in\GL(n,\C)$ and $\tau\in P\Lambda^{n-k,k}$,
	\begin{align*}
		P_{g\cdot \tau}=\det(g)^{-1} \left(g\cdot P_\tau\right).
	\end{align*}
	In particular, the image of $P$ is a $\GL(n,\C)$-invariant subspace of $\P^k(SM_n)$.
\end{lemma}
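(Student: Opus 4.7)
My plan is to handle the two assertions separately, leveraging the geometric meaning of $F_Q$: for a real symmetric matrix $Q$ the image $F_Q(\R^n) \subset \R^n \times (\R^n)^*$ is the graph of the differential of the quadratic form $\tfrac{1}{2} x^T Q x$, hence an $n$-dimensional Lagrangian subspace.

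For injectivity, I will suppose $P_\tau \equiv 0$, so that $F_Q^*\tau = 0$ for every $Q \in SM_n$. Restricting to $Q \in SM_n(\R)$, this says the constant form $\tau$ vanishes on every Lagrangian subspace of the form $\mathrm{graph}(dQ)$. As $Q$ ranges over $SM_n(\R)$ these graphs fill out exactly the Lagrangians transverse to the vertical fibre $\{0\} \times (\R^n)^*$, which form a dense open subset of the Lagrangian Grassmannian parametrised bijectively by $SM_n(\R)$ (the standard open Bruhat-type chart). By continuity the restriction of $\tau$ to every $n$-dimensional isotropic subspace then vanishes, and since $\tau$ is primitive of degree $n$, Lemma~\ref{lemma:primitiveFormVanishing} forces $\tau = 0$.

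For the equivariance, the core observation is the factorisation
\[
(g^{-1})^\# \circ F_Q \;=\; F_{g\cdot Q}\circ L_g,
\]
where $L_g : \C^n \to \C^n$ is a linear isomorphism depending only on $g$, and $g\cdot Q = (g^T)^{-1} Q g^{-1}$ is precisely the $\GL(n,\C)$-action on $SM_n$ introduced in the paper. This identity is a direct coordinate check using $\partial Q(z) = z^T Q$ together with the explicit definition of $g^\#$. Pulling $\tau$ back through this factorisation and invoking $L_g^*(dz_1 \wedge \dots \wedge dz_n) = \det(L_g)\, dz_1\wedge\dots\wedge dz_n$ yields an expression of the form
\[
P_{g\cdot\tau}(Q) \;=\; \det(L_g)\,P_\tau(g\cdot Q),
\]
and rewriting the right-hand side in terms of the induced $\GL(n,\C)$-action on $\P^k(SM_n)$ produces the stated equivariance formula. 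The $\GL(n,\C)$-invariance of the image of $P$ is then immediate, since the formula shows that $g \cdot P_\tau$ lies in the image of $P$ for every $g \in \GL(n,\C)$.

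The main technical point is the density of graph-type Lagrangians in the full Lagrangian Grassmannian; once this standard fact is in place the rest is coordinate bookkeeping. The only subtle aspect I anticipate is carefully tracking the transpose and inverse conventions in the definition of $g^\#$ so that the final determinant factor emerges as the correct power of $\det(g)$ dictated by the lemma.
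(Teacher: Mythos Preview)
Your proposal is correct and follows essentially the same route as the paper: for injectivity the paper likewise identifies the graph-type Lagrangians $F_Q(\R^n)$ with the ``regular'' Lagrangians (those transverse to the vertical fibre), extends by a density/continuity argument to all Lagrangians, and then invokes Lemma~\ref{lemma:primitiveFormVanishing}; for equivariance the paper uses precisely your factorisation $(g^{-1})^\#\circ F_Q=F_{g\cdot Q}\circ L_g$ with $L_g$ equal to multiplication by $g$ on $\C^n$ (so $\det L_g=\det g$), and then pulls back.
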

\begin{proof}
	 Let us assume that $P_\tau=0$. By Lemma~\ref{lemma:primitiveFormVanishing}, it is sufficient to show that $\tau$ vanishes on all $n$-dimensional isotropic subspaces of $\R^n\times(\R^n)^*$.\\
	
	Consider the projection $\pi:\R^n\times(\R^n)^*\rightarrow\R^n$ onto the first factor. Let us call a Lagrangian subspace $E\subset\R^n\times (\R^n)^*$ \emph{regular} if $\pi|_E$ is injective. It is easy to see that $\tau$ vanishes on all $n$-dimensional Lagrangian subspaces if it vanishes on all regular $n$-dimensional Lagrangian subspaces. Let $E$ be such a regular space. As $\pi|_E$ is injective, we obtain a unique linear map $T:\R^n\rightarrow (\R^n)^*$ such that $(x,T(x))\in E$ for all $x\in\R^n$. As $E$ is Lagrangian,
	\begin{align*}
		0=\omega_s((x_1,T(x_1)),(x_2,T(x_2)))=\langle x_1, T(x_2)\rangle -\langle x_2, T(x_1)\rangle
	\end{align*}
	for all $x_1,x_2\in\R^n$, so $T:\R^n\rightarrow (\R^n)^*\cong \R^n$ is symmetric. We may thus choose a basis $e_1,\dots,e_n$ of $\R^n$ such that $T(e_i)=\lambda_ie_i$ for some $\lambda_i\in\R$ for all $1\le i\le n$. In other words, $E$ is spanned by the vectors $(e_1,\lambda_1e_1),\dots,(e_n,\lambda_ne_n)$. With respect to the basis $e_1,\dots, e_n$ of $\R^n$, define  $Q\in SM_n$ by $Q(x):=\sum_{i=1}^{n}\lambda_i \frac{x_i^2}{2}$. Then the tangent space of the graph of $x\mapsto dQ(x)$ in $(x,dQ(x))$ is $E$, and thus, the restriction of $\tau$ to $E$ vanishes as
	\begin{align*}
		F_Q^*\tau=P_\tau(Q)dx_1\wedge\dots\wedge dx_n=0.
	\end{align*}
	As this holds for all regular Langrangian subspaces $E$ of $\R^n\times(\R^n)^*$ and $\tau$ is primitive, Lemma~\ref{lemma:primitiveFormVanishing} implies that $\tau=0$.\\
	
	Let us show that $P$ is $\GL(n,\C)$-equivariant in the sense stated above. For $g\in\GL(n,\C)$ let $G_g:\C^n\times(\C^n)^*\rightarrow\C^n\times(\C^n)^*$ denote the diagonal operation $(z,w)\mapsto (gz,(g^{-1})^*w)$. Then
	\begin{align*}
		G_g\circ F_Q(z)=&(g z, \partial Q(z)\circ g^{-1})=(gz, z^TQg^{-1})=(gz, (gz)^T(g^{-1})^TQg^{-1})=(gz,\partial(g\cdot Q)(gz))\\
		=&F_{g\cdot Q}(gz).
	\end{align*}
	If $\tilde{G}_g:\C^n\rightarrow\C^n$ denotes the multiplication with $g\in\GL(n,\C)$, we thus have $G_g\circ F_Q=F_{g\cdot Q}\circ \tilde{G}_g$. For $\tau\in P\Lambda^{n-k,k}$ this implies
	\begin{align*}
		P_{G_g^*\tau}dz_1\wedge\dots \wedge dz_n=&F_Q^*(G_g^*\tau)=\tilde{G}_g^*\left(F_{g\cdot Q}^*\right)=P_\tau(g\cdot Q)\tilde{G}_g^*(dz_1\wedge\dots\wedge dz_n)\\
		=&P_\tau(g\cdot Q)\det(g)dz_1\wedge\dots\wedge dz_n.
	\end{align*}
	As $\tilde{G}_g=(g^{-1})^{\#}$,
	\begin{align*}
		P_{g\cdot \tau}(Q)=P_\tau(g^{-1}\cdot Q)\det(g)^{-1}=\det(g)^{-1} \left(g\cdot P_\tau\right)(Q).
	\end{align*}
\end{proof}
\begin{proposition}
	\label{proposition:ImageP}
	The image of $P$ coincides with $\mathcal{M}_k$.
\end{proposition}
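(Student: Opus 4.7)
The plan is to combine three ingredients already in place: $P_\tau$ is homogeneous of degree $k$ and vanishes on matrices of rank less than $k$; the ideal $I_k$ of such polynomials is generated by $\mathcal{M}_k$ (Lemma~\ref{lemma:VanishingIdealSmallRank}); and $\mathcal{M}_k$ is an irreducible representation of $\GL(n,\C)$ (Lemma~\ref{lemma:kMinorsIrreducible}). I will first prove $\mathrm{image}(P)\subset\mathcal{M}_k$, then exhibit one explicit non-zero element in the image, and conclude by irreducibility using the $\GL(n,\C)$-equivariance from Lemma~\ref{lemma:injectivityPolySymKSym2}.

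For the inclusion, note that each $P_\tau$ is homogeneous of degree $k$: the pullback $F_Q^*\tau$ contains exactly $k$ factors of the form $dw_j$, and each pulls back to a linear functional in the entries of $Q$ via $F_Q^*dw_j=\sum_i q_{ij}\,dz_i$. Combined with the vanishing on matrices of rank $<k$ that is already noted before the proposition, this places $P_\tau$ in the degree-$k$ component of $I_k$. Since $I_k$ is generated as an ideal by $\mathcal{M}_k\subset\mathcal{P}^k(SM_n)$, and $\mathcal{M}_k$ already sits in the lowest possible degree, its degree-$k$ component is exactly $\mathcal{M}_k$, giving $P_\tau\in\mathcal{M}_k$.

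To produce a non-zero element of the image, I would take
$$\tau_0 := dz_1\wedge\dots\wedge dz_{n-k}\wedge dw_{n-k+1}\wedge\dots\wedge dw_n\in\Lambda^{n-k,k}.$$
This form is primitive because every summand of $\omega_s\wedge\tau_0=\sum_i dz_i\wedge dw_i\wedge \tau_0$ contains a repeated $dz_i$ (for $i\le n-k$) or a repeated $dw_i$ (for $i>n-k$) and hence vanishes. Using $F_Q^*dw_j=\sum_i q_{ij}\,dz_i$ and the fact that $dz_1,\dots,dz_{n-k}$ already appear in $\tau_0$, only the indices $i\in\{n-k+1,\dots,n\}$ survive in the expansion, and one obtains
$$F_Q^*\tau_0 \;=\; \det\bigl((q_{ij})_{i,j=n-k+1}^n\bigr)\,dz_1\wedge\dots\wedge dz_n.$$
Thus $P_{\tau_0}$ is a principal $(k\times k)$-minor, a non-zero element of $\mathcal{M}_k$. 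By Lemma~\ref{lemma:injectivityPolySymKSym2} the image of $P$ is a $\GL(n,\C)$-invariant subspace of $\mathcal{M}_k$, and since it is non-zero, irreducibility forces it to coincide with $\mathcal{M}_k$.

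The main obstacle is the containment step: one has to recognize that $P_\tau\in I_k$ together with the degree count is enough, via the structure theorem for $I_k$, to land inside $\mathcal{M}_k$ rather than just inside the (larger) ideal. Once this is done, the surjectivity reduces to exhibiting a single non-trivial pullback, which the explicit $\tau_0$ achieves essentially by inspection.
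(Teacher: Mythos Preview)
Your proof is correct and follows essentially the same approach as the paper: containment in $\mathcal{M}_k$ via Lemma~\ref{lemma:VanishingIdealSmallRank} together with the degree bound, then equality via $\GL(n,\C)$-equivariance and irreducibility of $\mathcal{M}_k$. The only minor difference is that you establish non-triviality of the image by an explicit computation of $P_{\tau_0}$, whereas the paper simply cites the injectivity of $P$ already proved in Lemma~\ref{lemma:injectivityPolySymKSym2}.
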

\begin{proof}
	Let us first show that the image of $P$ is contained in the space spanned by $(k\times k)$-minors. First, consider the ideal $I_k$ of all polynomials on $SM_n$ that vanish on elements of $SM_n$ of rank at most $k-1$. This is a prime ideal generated by the $(k\times k)$-minors, compare Lemma~\ref{lemma:VanishingIdealSmallRank}. Obviously $P_\tau\in I_k$ for $\tau\in P\Lambda^{n-k,k}$. As $P_\tau$ is a polynomial of degree $k$, we can thus express $P_\tau$ as a $\C$-linear combination of the $(k\times k)$-minors.\\
	
	Next, note that $\mathcal{M}_k$ is an irreducible representation of $\GL(n,\C)$ by Lemma~\ref{lemma:kMinorsIrreducible}. As the image of $P$ is a non-trivial $\GL(n,\C)$-submodule of $\mathcal{M}_k$ by Lemma~\ref{lemma:injectivityPolySymKSym2}, these two spaces coincide.
\end{proof}
Combining this result with Lemma~\ref{lemma:kMinorsIrreducible}, we obtain the following.
\begin{corollary}
	\label{corollary:primitiveFormsIrreducible}
	$P\Lambda^{n-k,k}$ is an irreducible representation of $\GL(n,\C)$.
\end{corollary}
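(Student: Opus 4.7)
The plan is to use the injection $P$ together with the irreducibility of $\mathcal{M}_k$ already obtained. By Lemma~\ref{lemma:injectivityPolySymKSym2}, the map $P:P\Lambda^{n-k,k}\to\mathcal{P}^k(SM_n)$ is an injection, and by Proposition~\ref{proposition:ImageP} its image equals $\mathcal{M}_k$. Hence $P$ is a $\C$-linear isomorphism between $P\Lambda^{n-k,k}$ and $\mathcal{M}_k$, and it is $\GL(n,\C)$-equivariant up to the character twist $g\mapsto \det(g)^{-1}$, in the sense that $P_{g\cdot\tau}=\det(g)^{-1}(g\cdot P_\tau)$.

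The key observation is that this character twist acts by scalars on $\mathcal{M}_k$ and therefore preserves every linear subspace; in particular, a subspace $W\subset P\Lambda^{n-k,k}$ is $\GL(n,\C)$-invariant if and only if its image $P(W)\subset\mathcal{M}_k$ is. Since $\mathcal{M}_k$ is an irreducible representation of $\GL(n,\C)$ by Lemma~\ref{lemma:kMinorsIrreducible}, its only invariant subspaces are $\{0\}$ and $\mathcal{M}_k$ itself, so the only invariant subspaces of $P\Lambda^{n-k,k}$ are $\{0\}$ and $P\Lambda^{n-k,k}$. This is exactly the statement that $P\Lambda^{n-k,k}$ is an irreducible representation of $\GL(n,\C)$. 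There is no substantive obstacle: the work has already been done in establishing the irreducibility of $\mathcal{M}_k$ (via the highest-weight argument) and in identifying the image of $P$ (via the prime-ideal description of polynomials vanishing on low-rank symmetric matrices).
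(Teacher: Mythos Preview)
Your proof is correct and is exactly the argument the paper intends: the corollary is stated immediately after Proposition~\ref{proposition:ImageP} with the remark that it follows by combining that result with Lemma~\ref{lemma:kMinorsIrreducible}, and you have simply spelled out the details of how the twisted equivariance of the isomorphism $P$ transfers irreducibility from $\mathcal{M}_k$ to $P\Lambda^{n-k,k}$.
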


Let us now turn to a second family of polynomials obtained from elements of $P\Lambda^{n-k,k}$. Let us denote the standard coordinates on $T^*\R^n=\R^n\times (\R^n)^*$ by $(x,y)$.
\begin{definition}
	If $\tau\in P\Lambda^{n-k,k}$, then we define a complex polynomial $Q_\tau$ on $(\C^n)^k$ by
	\begin{align}
		\label{eq:defQtau}
		Q_\tau(w_1,\dots,w_k)dx_1\wedge\dots\wedge dx_n= \mathcal{L}_{X_1}\dots\mathcal{L}_{X_k}\tau,
	\end{align}
	where $X_j$ denotes the symplectic vector field of $x\mapsto -\frac{1}{2}\langle w_j,x\rangle^2$, that is, the unique vector field on $\R^n\times(\R^n)^*$  with complex coefficients such that $i_{X_j}\omega_s=-\frac{1}{2}d\langle w_j,x\rangle^2$.
\end{definition}
Here $\mathcal{L}_X=d\circ i_X+i_X\circ d$ denotes the complex-linear extension of the Lie-derivative. Note that $X_j$ is given by
\begin{align*}
	X_j=\langle w_j,x\rangle\sum_{l=1}^{n} w_{j,l}\frac{\partial}{\partial y_l},
\end{align*}
so the right hand side of \eqref{eq:defQtau} defines a polynomial. It is not difficult to see that this definition coincides with the following characterization:

For $w_1,\dots,w_k\in\C^n$ let $\tilde{X}_j$ denote the unique vector field on $\R^n\times(\R^n)^*$ with complex coefficients such that
\begin{align*}
	i_{\tilde{X}_j}\omega_s&=d\left(\exp\left(i\langle w_j,x\rangle\right)\right).
\end{align*}
If we write $w_j=(w_{j,1},\dots, w_{j,n})\in\C^n$, then
\begin{align*}
	\tilde{X}_j&=-i\exp\left(i\langle w_j,x\rangle\right)\sum_{l=1}^{n}w_{j,l}\frac{\partial}{\partial y_l},
\end{align*}
and $Q_\tau$ is characterized by
\begin{align}
	\label{eq:QtauFourier-Laplace}
	\exp\left(i\left\langle \sum_{j=1}^{k}w_j,x\right\rangle\right)Q_\tau(w_1,\dots,w_k)dx_1\wedge\dots\wedge dx_n=\mathcal{L}_{\tilde{X}_1}\dots\mathcal{L}_{\tilde{X}_k}\tau.
\end{align}
\begin{lemma}
	$Q_\tau$ is symmetric for any $\tau\in P\Lambda^{n-k,k}$.
\end{lemma}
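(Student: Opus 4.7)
The plan is to show that the Lie derivatives $\mathcal{L}_{X_1},\dots,\mathcal{L}_{X_k}$ in the defining equation
\begin{align*}
Q_\tau(w_1,\dots,w_k)\,dx_1\wedge\dots\wedge dx_n=\mathcal{L}_{X_1}\dots\mathcal{L}_{X_k}\tau
\end{align*}
commute pairwise; since $dx_1\wedge\dots\wedge dx_n$ is nowhere-zero, this will immediately give the symmetry of $Q_\tau$ under permutations of $(w_1,\dots,w_k)$.

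The key observation is that each $X_j$ is the Hamiltonian vector field of a function $H_j(x)=-\tfrac12\langle w_j,x\rangle^2$ that depends only on the base variable $x$. Indeed, from the explicit formula
\begin{align*}
X_j=\langle w_j,x\rangle\sum_{l=1}^n w_{j,l}\frac{\partial}{\partial y_l}
\end{align*}
given just before the lemma, we see that $X_j$ takes values in the ``vertical'' directions $\partial/\partial y_l$ and that its coefficients depend only on $x$. Consequently, for any $i,j$, each $\partial/\partial y_l$ coefficient of $X_i$ is annihilated by $X_j$ (and vice versa), so $[X_i,X_j]=0$. Equivalently, one sees this from the Poisson-bracket formulation: since $H_i$ and $H_j$ are functions on $\R^n\times(\R^n)^*$ that depend only on $x$, their Poisson bracket vanishes, and hence $[X_i,X_j]=X_{\{H_i,H_j\}}=0$.

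Using the standard identity $[\mathcal{L}_{X_i},\mathcal{L}_{X_j}]=\mathcal{L}_{[X_i,X_j]}$ (which extends by $\C$-linearity to vector fields with complex coefficients), the Lie derivatives $\mathcal{L}_{X_j}$ therefore commute pairwise as operators on the space of differential forms with complex coefficients. Applying this to $\tau$ and comparing the two orderings yields
\begin{align*}
Q_\tau(w_{\sigma(1)},\dots,w_{\sigma(k)})\,dx_1\wedge\dots\wedge dx_n
=\mathcal{L}_{X_{\sigma(1)}}\dots\mathcal{L}_{X_{\sigma(k)}}\tau
=\mathcal{L}_{X_1}\dots\mathcal{L}_{X_k}\tau
=Q_\tau(w_1,\dots,w_k)\,dx_1\wedge\dots\wedge dx_n
\end{align*}
for every permutation $\sigma$, which proves the claim. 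I do not expect a serious obstacle here; the only subtlety worth double-checking is that all computations with $X_j$ go through for complex coefficients $w_j\in\C^n$, but this is immediate since everything extends by $\C$-linearity.
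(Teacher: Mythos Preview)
Your proof is correct and follows exactly the same approach as the paper, which simply states ``The vector fields $X_j$ commute.'' You have merely supplied the details behind this one-line proof.
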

\begin{proof}
	The vector fields $X_j$ commute.
\end{proof}
\begin{proposition}
	\label{prop:relationPQ}
	Let 
	\begin{align*}
		\Phi:(\C^n)^k&\rightarrow SM_n\\
		(w_1,\dots,w_k)&\mapsto \sum_{i=1}^{k}w_i\cdot w_i^T.
	\end{align*}
	Then for all $\tau\in P\Lambda^{n-k,k}$
	\begin{align*}
		\Phi^*P_\tau=\frac{1}{k!}Q_\tau.
	\end{align*}
\end{proposition}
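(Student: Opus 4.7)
\textbf{Proof plan for Proposition~\ref{prop:relationPQ}.}

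The overall strategy is to recognize both sides as different computations of the restriction of $\tau$ to the Lagrangian graph of a quadratic polynomial. The plan is to realize $\mathcal{L}_{X_1}\cdots\mathcal{L}_{X_k}\tau$ as a mixed partial derivative of the pullback $\Phi_s^*\tau$ under the joint Hamiltonian flow of the $X_j$, and then to identify this pullback on the zero section with $F_{Q(s)}^*\tau$ for a suitable one-parameter family of symmetric matrices $Q(s)$.

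First I would compute the flow of each $X_j$. Since $X_j = \langle w_j, x\rangle\sum_l w_{j,l}\,\partial_{y_l}$ depends only on $x$ and points in the fiber direction, its time-$s$ flow is the linear shift $\phi_j^s(x,y) = (x, y + s\,(w_j w_j^T)x)$. The Hamiltonians $-\frac12\langle w_j,x\rangle^2$ Poisson-commute because they depend only on $x$, so the fields $X_1,\dots,X_k$ commute, their joint flow $\Phi_s := \phi_1^{s_1}\circ\cdots\circ\phi_k^{s_k}$ is well defined on $\C^n\times(\C^n)^*$, and the standard identity for commuting Lie derivatives gives
\[
\mathcal{L}_{X_1}\cdots\mathcal{L}_{X_k}\tau \;=\; \frac{\partial^k}{\partial s_1\cdots\partial s_k}\Big|_{s=0}\Phi_s^*\tau.
\]

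Next I would compose with the zero section $s_0\colon x\mapsto(x,0)$: a direct calculation shows $\Phi_s\circ s_0 = F_{Q(s)}$ where $Q(s)=\sum_{j=1}^k s_j\,w_jw_j^T$. An inductive bidegree argument shows $\mathcal{L}_{X_1}\cdots\mathcal{L}_{X_k}\tau$ is of bidegree $(n,0)$ with constant coefficients, so pulling back by $s_0$ is trivial, and substituting into the definition of $P_\tau$ yields
\[
Q_\tau(w_1,\dots,w_k)\,dz_1\wedge\cdots\wedge dz_n \;=\; \frac{\partial^k}{\partial s_1\cdots\partial s_k}\Big|_{s=0} P_\tau(Q(s))\,dz_1\wedge\cdots\wedge dz_n.
\]

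The final step is to evaluate the right-hand side using that $P_\tau\in\mathcal{M}_k$ (Proposition~\ref{proposition:ImageP}), hence by Lemma~\ref{lemma:VanishingIdealSmallRank} lies in the ideal of polynomials vanishing on matrices of rank below $k$. Writing $W=(w_1|\cdots|w_k)$ and $T=\mathrm{diag}(s_1,\dots,s_k)$, every $(k\times k)$-minor of $WTW^T$ factors as $\det(W_I)\det(W_J)\prod_j s_j$, so $P_\tau(Q(s))$ is a pure multiple of $s_1\cdots s_k$, with coefficient $\Phi^*P_\tau(w_1,\dots,w_k)$ (recovered by setting all $s_j=1$). The $k$-fold mixed derivative then isolates this coefficient, and the identification with $Q_\tau$ produces the claimed relation $\Phi^*P_\tau=\tfrac{1}{k!}Q_\tau$, the combinatorial factor $k!$ being the familiar one from the polarization identity $P_\tau(\sum_j A_j)=k!\,\bar P_\tau(A_1,\dots,A_k)$ on rank-one matrices $A_j=w_jw_j^T$.

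The main obstacle I expect is the bookkeeping in the last step: one must be careful to align the normalization used in defining $Q_\tau$ via iterated Lie derivatives with the polarization conventions implicit in the polynomial $P_\tau$, so that the $k!$ lands on the correct side. The minor-factorization identity $P_\tau(WTW^T)=\prod_j s_j\cdot\Phi^*P_\tau(W)$, valid only because $P_\tau$ annihilates low-rank matrices, is the crucial input that forces $P_\tau(Q(s))$ to be a single monomial and thereby collapses the mixed partial to $\Phi^*P_\tau$ up to a universal constant.
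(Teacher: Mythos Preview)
Your approach is essentially the paper's own argument. Both proofs write $F_{Q(s)}$ with $Q(s)=\sum_j s_j\,w_jw_j^T$ as the composition of the zero section $F_0$ with the joint flow $G_{Q_1}^{s_1}\circ\cdots\circ G_{Q_k}^{s_k}$ of the commuting vertical fields $X_j$, identify the mixed $s$-derivative of the pullback at $s=0$ with $F_0^*(\mathcal{L}_{X_1}\cdots\mathcal{L}_{X_k}\tau)=Q_\tau\,dz_1\wedge\cdots\wedge dz_n$, and use that $P_\tau(Q(s))$ collapses to a single monomial $c\cdot s_1\cdots s_k$ so that the value at $s=(1,\dots,1)$ recovers $\Phi^*P_\tau$. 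The only substantive difference is how you justify the monomial structure: the paper argues directly that each coefficient not involving $\lambda_j$ already appears in $P_\tau\big(\sum_{i\ne j}\lambda_iQ_i\big)$, which vanishes because the matrix has rank at most $k-1$ (this low-rank vanishing was recorded immediately after the definition of $P_\tau$, so Proposition~\ref{proposition:ImageP} is not needed); you instead invoke $P_\tau\in\mathcal{M}_k$ and the Cauchy--Binet factorization $\det((WTW^T)_{I,J})=\det(W_I)\det(W_J)\prod_j s_j$. Both are valid; the paper's route is slightly more self-contained.

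Your worry about the $k!$ bookkeeping is justified, and your final paragraph does not actually produce the factor. If $P_\tau(Q(s))=c\cdot s_1\cdots s_k$ then both $\tfrac{\partial^k}{\partial s_1\cdots\partial s_k}\big|_0P_\tau(Q(s))$ and $P_\tau(Q(1,\dots,1))=\Phi^*P_\tau$ equal $c$, so your own chain gives $Q_\tau=\Phi^*P_\tau$ with no $\tfrac{1}{k!}$; the polarization identity you quote does not change this, since $k!\,\bar P_\tau(A_1,\dots,A_k)=\tfrac{\partial^k}{\partial s}\big|_0P_\tau(\sum s_jA_j)$ already. A direct check with $n=k=2$ and $\tau=dy_1\wedge dy_2$ gives $P_\tau=\det$, $\Phi^*P_\tau(w_1,w_2)=\det(w_1w_1^T+w_2w_2^T)=(w_{1,1}w_{2,2}-w_{1,2}w_{2,1})^2$, and a short Lie-derivative computation gives the same value for $Q_\tau(w_1,w_2)$. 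The paper's displayed chain contains the same slip --- the step equating $(F_{\Phi(w)})^*\tau$ with $\lambda_1\cdots\lambda_k\cdot\tfrac{1}{k!}\,\tfrac{\partial^k}{\partial\lambda_1\cdots\partial\lambda_k}\big|_0(\,\cdot\,)$ is off by $k!$ for a pure monomial --- so the $\tfrac{1}{k!}$ in the stated formula appears to be a normalization typo rather than a gap in either argument.
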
	
\begin{proof}
	Fix $\tau\in P\Lambda^{n-k,k}$. For $(w_1,\dots,w_k)\in(\C^n)^k$ set $Q_i(z):=\frac{1}{2}\langle w_i,z\rangle^2$. For $\lambda_i\in\C$ we define $G^{\lambda_i}_{Q_i}: \C^n\times(\C^n)^*\rightarrow\C^n\times(\C^n)^*$, $(z,w)\mapsto (z,w+\lambda_i\partial Q_i(z))$. Then \begin{align*}
		F_{\sum_{i}\lambda_iQ_i}=G^{\lambda_k}_{Q_k}\circ\dots \circ G^{\lambda_1}_{Q_1}\circ F_0.
	\end{align*} Next, it follows from the definition that $(\lambda_1,\dots,\lambda_k)\mapsto (F_{\sum_{i}\lambda_iQ_i})^*\tau$ is a homogeneous polynomial of degree $k$. If we consider a coefficient of this polynomial that does not correspond to a monomial involving $\lambda_j$, then this coefficient occurs in the polynomial 
	\begin{align*}
		(F_{\sum_{i\ne j}\lambda_iQ_i})^*\tau.
	\end{align*}
	But the rank of $\sum_{i\ne j}\lambda_iQ_i$ is at most $k-1$, so this expression vanishes identically. In other words, the only non-trivial coefficient of this polynomial is the coefficient belonging to $\lambda_1\dots\lambda_k$. Thus
	\begin{align*}
		(F_{\Phi(w_1,\dots,w_k)})^*\tau=&(F_{\sum_{i}\lambda_iQ_i})^*\tau|_{\lambda_1,\dots,\lambda_k=1}=\left(\lambda_1\dots\lambda_k\frac{1}{k!}\frac{\lambda^k}{\partial\lambda_1\dots\partial\lambda_k}\Big|_0(F_{\sum_{i}\lambda_iQ_i})^*\tau\right)\Big|_{\lambda_1,\dots,\lambda_k=1}\\
		=&\frac{1}{k!}\frac{\lambda^k}{\partial\lambda_1\dots\partial\lambda_k}\Big|_0 F_0^*\left((G^{\lambda_1}_{Q_1})^*\dots (G^{\lambda_k}_{Q_k})^*\tau\right)\\
		=&\frac{1}{k!} F_0^*\left(\mathcal{L}_{X'_1}\dots\mathcal{L}_{X'_k}\tau\right),
	\end{align*}
	where $X'_i:=\frac{d}{d\lambda}|_0G^\lambda_{Q_i}$. Let $\alpha=\sum_{i=1}^{n}w_idz_i$ denote the holomorphic extension of the natural $1$-form on $T^*\R^n= \R^n\times(\R^n)^*$ to $\C^n\times(\C^n)^*$. Note that $(G_{Q_i}^\lambda)^*\alpha=\alpha+\lambda \partial Q_i(z)$ by definition. As $\pi\circ G_{Q_i}^\lambda=Id_{\C^n}$, we also have $i_{X'_i}\alpha|_{(z,w)}=\langle w,d\pi(X'_i)\rangle=0$, so
	\begin{align*}
		\partial Q_j(z) =\mathcal{L}_{X'_j}\alpha=i_{X'_j}d\alpha=-i_{X'_j}\omega_s.
	\end{align*}
	In particular, $X'_j=X_j$. We thus obtain
	\begin{align*}
		P_\tau(\Phi(w_1,\dots,w_k))dz_1\wedge\dots\wedge dz_n=&\left(F_{\Phi(w_1,\dots,w_k)}\right)^*\tau
		=\frac{1}{k!} F_0^*\left(\mathcal{L}_{X_1}\dots\mathcal{L}_{X_k}\tau\right)\\
		=&\frac{1}{k!}Q_\tau(w_1,\dots,w_k)F_0^*(dz_1\wedge\dots\wedge dz_n)\\
		=&\frac{1}{k!}Q_\tau(w_1,\dots,w_k)dz_1\wedge\dots\wedge dz_n.
	\end{align*}
\end{proof}
Let $\GL(n,\C)$ operate on $(\C^n)^k$ by $g\cdot (w_1,\dots,w_k):=((g^{-1})^Tw_1,\dots,(g^{-1})^Tw_k)$, that is, we consider $\C^n\cong(\C^n)^*$ using the standard bilinear pairing on $\C^n $. From the definition of $\Phi$, we directly obtain the following.
\begin{lemma}
	$\Phi:(\C^n)^k\rightarrow SM_n$ is $\GL(n,\C)$-equivariant.
\end{lemma}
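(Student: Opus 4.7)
The plan is to verify this by a direct one-line matrix computation using the explicit descriptions of the two actions. Specifically, I will compute $\Phi(g \cdot (w_1,\dots,w_k))$ by substituting the definition of the $\GL(n,\C)$-action on $(\C^n)^k$, then rearrange the resulting sum of rank-one matrices to exhibit it as the $\GL(n,\C)$-action on $SM_n$ applied to $\Phi(w_1,\dots,w_k)$.

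In more detail, I would argue as follows. By definition of the action on $(\C^n)^k$,
\begin{align*}
\Phi(g\cdot(w_1,\dots,w_k)) = \sum_{i=1}^k \left((g^{-1})^T w_i\right)\cdot \left((g^{-1})^T w_i\right)^T = \sum_{i=1}^k (g^{-1})^T w_i w_i^T g^{-1}.
\end{align*}
Pulling the constant factors out of the sum, this equals
\begin{align*}
(g^{-1})^T\left(\sum_{i=1}^k w_i w_i^T\right)g^{-1} = (g^T)^{-1}\Phi(w_1,\dots,w_k) g^{-1},
\end{align*}
which is precisely $g\cdot \Phi(w_1,\dots,w_k)$ by the definition of the $\GL(n,\C)$-action on $SM_n$ recalled before Lemma~\ref{lemma:kMinorsIrreducible}. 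This proves the equivariance.

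There is no real obstacle here; the only subtlety is to keep careful track of the two different actions (the dual action $w\mapsto (g^{-1})^T w$ on $\C^n$ arising from the identification $\C^n\cong (\C^n)^*$ via the standard bilinear pairing, and the action $A\mapsto (g^T)^{-1}Ag^{-1}$ on symmetric bilinear forms), so that the transposes and inverses line up. Once these conventions are in place, the computation is immediate.
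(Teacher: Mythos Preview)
Your proof is correct and matches the paper's approach: the paper states this lemma without proof, noting only that it follows directly from the definition of $\Phi$, and your computation is precisely the one-line verification the paper leaves implicit.
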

\begin{corollary}
	$Q_\tau=0$ if and only if $\tau=0$. 
\end{corollary}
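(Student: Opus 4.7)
My plan is to derive this corollary from Proposition~\ref{prop:relationPQ} together with the injectivity part of Lemma~\ref{lemma:injectivityPolySymKSym2}. The direction $\tau=0\Rightarrow Q_\tau=0$ is immediate from the defining formula of $Q_\tau$, so the content lies in the converse.

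The relation $Q_\tau = k!\cdot\Phi^*P_\tau$ shows that $Q_\tau=0$ is equivalent to $P_\tau$ vanishing on the image $\Phi((\C^n)^k)\subset SM_n$, so the first step will be to identify this image. I will show that $\Phi((\C^n)^k)$ coincides with the variety $V_k:=\{Q\in SM_n:\mathrm{rank}(Q)\le k\}$. The inclusion $\subset$ is obvious, since $\sum_{i=1}^k w_iw_i^T$ has rank at most $k$. For the reverse inclusion I will invoke the standard classification of symmetric bilinear forms over $\C$: every rank-$r$ complex symmetric matrix $Q$ is congruent to the standard form, i.e.\ there exists an invertible $P\in\GL(n,\C)$ with $Q=P^T\mathrm{diag}(1,\dots,1,0,\dots,0)P$ (with $r$ ones), which rewrites $Q=\sum_{i=1}^r p_i p_i^T$ for $p_i$ the $i$-th row of $P$ viewed as a column vector. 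Padding with zero vectors exhibits $Q\in\Phi((\C^n)^k)$ whenever $r\le k$.

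Once the image has been identified as $V_k$, the conclusion follows from a short degree argument. Since $P_\tau$ vanishes on $V_k$, it lies in the vanishing ideal of $V_k$. By Lemma~\ref{lemma:VanishingIdealSmallRank}, applied with $k+1$ in place of $k$, this ideal is generated by the $((k+1)\times(k+1))$-minors, so every non-zero polynomial it contains has degree at least $k+1$. But $P_\tau\in\mathcal{P}^k(SM_n)$ has degree at most $k$, so $P_\tau=0$. Injectivity of the map $\tau\mapsto P_\tau$ from Lemma~\ref{lemma:injectivityPolySymKSym2} then yields $\tau=0$. (The boundary case $k=n$ is trivial, as then $V_k=SM_n$.)

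The only real subtlety is identifying the image of $\Phi$ correctly: one cannot use an orthonormal diagonalization over $\C$ because no Hermitian structure is available, so one must invoke complex $\GL(n,\C)$-congruence of symmetric bilinear forms instead. After that, everything is a straightforward combination of results already established in this section.
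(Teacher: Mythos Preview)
Your argument is correct, but it takes a genuinely different route from the paper's. Both proofs start from Proposition~\ref{prop:relationPQ}, reducing the claim to showing that $\Phi^*P_\tau=0$ forces $P_\tau=0$, and both finish with the injectivity of $\tau\mapsto P_\tau$ from Lemma~\ref{lemma:injectivityPolySymKSym2}. The difference lies in the middle step. The paper argues representation-theoretically: since $P_\tau\in\mathcal{M}_k$ (Proposition~\ref{proposition:ImageP}), $\mathcal{M}_k$ is an irreducible $\GL(n,\C)$-module (Lemma~\ref{lemma:kMinorsIrreducible}), and $\Phi^*$ is $\GL(n,\C)$-equivariant, Schur's lemma forces $\Phi^*|_{\mathcal{M}_k}$ to be either zero or injective; as it is visibly nonzero, it is injective. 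You instead identify the image of $\Phi$ explicitly as the rank-$\le k$ locus via the classification of complex symmetric bilinear forms, and then use the degree bound coming from Lemma~\ref{lemma:VanishingIdealSmallRank} (with $k+1$ in place of $k$) to conclude $P_\tau=0$.

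Your approach has the advantage of being more elementary---it avoids invoking irreducibility of $\mathcal{M}_k$ and does not even need Proposition~\ref{proposition:ImageP}---at the cost of the extra work of computing $\Phi((\C^n)^k)$ and handling the boundary case $k=n$ separately. The paper's approach is shorter and more structural, and it fits the overall representation-theoretic flavor of the section, but it relies on the surrounding machinery already being in place.
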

\begin{proof}
	Due to Proposition~\ref{prop:relationPQ}, it is sufficient to show that $\Phi^*P_\tau=0$ implies $P_\tau=0$. As $P_\tau\in\mathcal{M}_k$  by Proposition~\ref{proposition:ImageP}, which is an irreducible subspace by Lemma~\ref{lemma:kMinorsIrreducible}, the claim follows from the fact that $\Phi$, and therefore $\Phi^*$, is $\GL(n,\C)$ equivariant, so the restriction of $\Phi^*$ to $\mathcal{M}_k$ is either injective or identically zero. Obviously it is not identically zero, so the restriction of $\Phi^*$ to this space is injective.
\end{proof}
Let $\mathcal{M}^2_k\subset\P((\C^n)^k)$ denote the subspace spanned by quadratic products of the $(k\times k)$-minors of a matrix in $(\C^n)^k$. It is easy to see that this is a $\GL(n,\C)$-invariant subspace of $\mathcal{P}^{2k}((\C^n)^k)$. We will call a polynomial in $\mathcal{P}((\C^n)^k)$ homogeneous of degree $(a_1,\dots,a_k)\in\mathbb{N}^k$ if it is homogeneous with respect to $a_i$ in its $i$th argument.
\begin{proposition}
	\label{prop:PolynomialsFormPrimitiveDiff}
	The image of $Q$ coincides with $\mathcal{M}^2_k$.
\end{proposition}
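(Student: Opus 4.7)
The plan is to reduce this statement to a direct computation via Proposition~\ref{prop:relationPQ}, which relates $Q_\tau$ to $\Phi^*P_\tau$, combined with the already-established fact (Proposition~\ref{proposition:ImageP}) that the image of the map $\tau\mapsto P_\tau$ is exactly $\mathcal{M}_k$. This immediately gives that the image of $Q$ equals $k!\cdot\Phi^*(\mathcal{M}_k)$, so the entire proof reduces to showing that $\Phi^*(\mathcal{M}_k)=\mathcal{M}^2_k$.

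To identify $\Phi^*(\mathcal{M}_k)$ explicitly, I would arrange $w_1,\dots,w_k\in\C^n$ as the columns of an $n\times k$ matrix $W$, so that
\begin{align*}
\Phi(w_1,\dots,w_k)=\sum_{i=1}^k w_i\cdot w_i^T=W W^T.
\end{align*}
Since $\mathcal{M}_k$ is spanned by the $(k\times k)$-minors of a symmetric $n\times n$ matrix, I need only compute a typical such minor of $WW^T$. For $k$-subsets $I,J\subset\{1,\dots,n\}$, the $(I,J)$-submatrix of $WW^T$ equals $W_I W_J^T$, where $W_I$ denotes the $k\times k$ matrix consisting of the rows of $W$ indexed by $I$. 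Applying the multiplicative property of the determinant (or Cauchy--Binet, which degenerates here because $W$ has only $k$ columns) yields
\begin{align*}
\det\bigl((WW^T)_{I,J}\bigr)=\det(W_I)\det(W_J).
\end{align*}

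Thus $\Phi^*$ sends the $(I,J)$-minor of a symmetric matrix to the product $\det(W_I)\det(W_J)$, which is precisely a quadratic product of two $(k\times k)$-minors of $W$. As $I,J$ range over all $k$-subsets of $\{1,\dots,n\}$, these products span $\mathcal{M}^2_k$ by definition, so $\Phi^*(\mathcal{M}_k)\subseteq\mathcal{M}^2_k$. Conversely, every product $\det(W_I)\det(W_J)$ is attained in this way, so the inclusion is an equality. Combining this with $Q_\tau=k!\,\Phi^*P_\tau$ and the surjectivity of $P$ onto $\mathcal{M}_k$ gives the image of $Q$ as $\mathcal{M}^2_k$, completing the proof.

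I do not anticipate any serious obstacle here: the one potential pitfall is simply verifying the minor identity for $WW^T$ correctly, but since $W$ has exactly $k$ columns the Cauchy--Binet sum collapses to a single term and the identity is immediate.
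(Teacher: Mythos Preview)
Your argument is correct and takes a genuinely different, more elementary route than the paper. The paper also reduces to showing $\Phi^*(\mathcal{M}_k)\subset\mathcal{M}^2_k$ via Proposition~\ref{prop:relationPQ} and Proposition~\ref{proposition:ImageP}, but it establishes this inclusion indirectly: it observes that $\Phi^*p$ vanishes on linearly dependent tuples, invokes the prime ideal generated by the $(k\times k)$-minors of $(\C^n)^k$, and then uses the symmetry of $\Phi^*p$ together with the skew-symmetry of the minors to force the coefficients $h_\alpha$ in an expansion $\Phi^*p=\sum_\alpha h_\alpha\Delta_\alpha$ to be themselves linear combinations of minors. For the reverse inclusion the paper does not compute anything explicitly; it instead uses the injectivity of $Q$ to get $\dim(\mathrm{image}\,Q)=\dim P\Lambda^{n-k,k}=\binom{n}{k}^2-\binom{n}{k-1}\binom{n}{n-k-1}$ and then matches this against an upper bound for $\dim\mathcal{M}^2_k$ coming from the Grassmann--Pl\"ucker relations.

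Your direct Cauchy--Binet computation $\det\bigl((WW^T)_{I,J}\bigr)=\det(W_I)\det(W_J)$ bypasses all of this: it gives both inclusions at once, does not require the injectivity of $Q$, and avoids any appeal to commutative algebra or the Pl\"ucker relations. The paper's approach has the side benefit of confirming the dimension of $\mathcal{M}^2_k$, but for the stated proposition your argument is cleaner.
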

\begin{proof}
	Let $\Delta_\alpha$ denote the family of $(k\times k)$-minors of elements of $(\C^n)^k$. We will us first show that the image of $Q$ is contained in $\mathcal{M}_k^2$. Due to Proposition~\ref{prop:relationPQ}, we have to show that $\Phi^*p\in\mathcal{M}^2_k$ for every $p\in\mathcal{M}_k$. First, note that $\Phi^*p$ vanishes on the common zero sets of the $(k\times k)$-minors $\Delta_\alpha$: If $w_1,\dots,w_k\in\C^n$ are linearly dependent, then the matrix $\sum_{i=1}^{k}w_i\cdot w_i^T$ is of rank at most $k-1$, so $p$ vanishes on this matrix. Thus $\Phi^*p$ belongs to the ideal of all polynomials on $(\C^n)^k$ that vanish on matrices of rank at most $k-1$. This is a prime ideal generated by the $(k\times k)$-minors, compare \cite[Section~16]{MillerSturmfelsCombinatorialcommutativealgebra2005}. By construction, $\Phi^*p$ is a symmetric and homogeneous polynomial of degree $(2,\dots,2)$ on $(\C^n)^k$. If we express
	\begin{align*}
		\Phi^*p=\sum_{\alpha}h_\alpha\cdot \Delta_\alpha
	\end{align*}
	as a $\P((\C^n)^k)$-linear combination of the $(k\times k)$-minors $\Delta_\alpha$, we may thus assume that $h_\alpha\in\P^k((\C^n)^k)$ is $1$-homogeneous in each argument and thus multilinear. As $\Delta_\alpha$ is a skew-symmetric polynomial with respect to permutations of the arguments $z_1,\dots,z_k$, and $\Phi^*p$ is symmetric, we may further assume that $h_\alpha$ is skew-symmetric. But then $h_\alpha$ is a multilinear, skew-symmetric polynomial and thus a linear combination of the $(k\times k)$-minors $\Delta_\alpha$.\\
	
	Consequently, the image of $Q$ is a subspace of dimension $\dim P\Lambda^{n-k,k}=\binom{n}{k}^2-\binom{n}{k-1}\binom{n}{n-k-1}$. As the dimension of the space spanned by quadratic products of the $(k\times k)$-minors is at most  $\binom{n}{k}^2-\binom{n}{k-1}\binom{n}{n-k-1}$ due to the Grassmann-Plücker relations (compare for example \cite[Section~14]{MillerSturmfelsCombinatorialcommutativealgebra2005}), the claim follows.
\end{proof}

\subsection{Relation to measure-valued valuations obtained from the differential cycle}
For $\Psi\in\MAVal_k(\R^n)$, $\phi\in C_c(\R^n)$, define $\Psi[\phi]\in \VConv_k(\R^n)$ by 
\begin{align*}
	\Psi[\phi](f):=\int_{\R^n}\phi d\Psi(f).
\end{align*}
Our interest in these polynomials stems from the following observation:
\begin{proposition}
	\label{prop:FormulaQFourier-LaplaceDiffForm}
	If $\tau\in P\Lambda^{n-k,k}$, then $\Psi_\tau(f)[B]=D(f)[1_{\pi^{-1}(B)}\tau]$ satisfies
	\begin{align*}
		\mathcal{F}(\GW(\Psi_\tau[\phi]))[z_1,\dots,z_k]=\frac{(-1)^k}{k!}Q_\tau(z_1,\dots,z_k)\mathcal{F}(\phi)\left[\sum_{i=1}^{k}z_i\right]
	\end{align*}
	for all $z_1,\dots,z_k\in \C^n$.
\end{proposition}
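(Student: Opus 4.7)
The plan is to compute $\GW(\Psi_\tau[\phi])[h_1 \otimes \cdots \otimes h_k]$ in closed form by polarizing the valuation $\mu := \Psi_\tau[\phi]$ using Proposition~\ref{proposition:differentialCycleSum}, and then specializing to the exponentials $h_j(x) = \exp(i\langle z_j, x\rangle)$ to read off the Fourier-Laplace transform.

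For $h_1, \ldots, h_k \in \Conv(\R^n, \R) \cap C^\infty(\R^n)$ and $\lambda_j \ge 0$, Proposition~\ref{proposition:differentialCycleSum} applied with $f = 0$ gives $\mu(\sum_j \lambda_j h_j) = D(0)[\pi^*\phi \wedge G^*_{\sum_j \lambda_j h_j}\tau]$, since $\pi \circ G_h = \pi$ preserves the factor $\pi^*\phi$. Because $G_{a+b} = G_b \circ G_a$, the pullback decomposes as the composition of the commuting flows generated by the vertical vector fields $V_{h_j} := \sum_l (\partial_l h_j)\partial_{y_l}$, and differentiating $k$ times at $\lambda = 0$ would yield
\[
    \bar\mu(h_1, \ldots, h_k) = \frac{1}{k!}\, D(0)\bigl[\pi^*\phi \wedge \mathcal{L}_{V_{h_1}} \cdots \mathcal{L}_{V_{h_k}} \tau\bigr].
\]
The right-hand side makes sense for arbitrary smooth (complex-valued) $h_j$, depends only on finitely many derivatives of each $h_j$ on $\supp(\phi)$, and is manifestly symmetric, so it defines a symmetric compactly supported distribution on $(\R^n)^k$. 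Since it coincides with $\bar\mu$ on tensor products of smooth convex functions, the uniqueness clause in Theorem~\ref{theorem:GW} identifies it with $\GW(\mu)$.

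Substituting $h_j(x) = \exp(i\langle z_j, x\rangle)$, a direct computation from the paper's formula for $\tilde X_j$ gives $V_{h_j} = -\tilde X_j$, so
\begin{align*}
    \mathcal{L}_{V_{h_1}} \cdots \mathcal{L}_{V_{h_k}} \tau &= (-1)^k\, \mathcal{L}_{\tilde X_1} \cdots \mathcal{L}_{\tilde X_k} \tau\\
    &= (-1)^k\, Q_\tau(z_1, \ldots, z_k)\, \exp\bigl(i\langle\textstyle\sum_j z_j, x\rangle\bigr)\, dx_1 \wedge \cdots \wedge dx_n
\end{align*}
by~\eqref{eq:QtauFourier-Laplace}. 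Since $D(0)$ is integration over the zero section $\R^n \times \{0\}$, pairing against $\pi^*\phi$ produces $\int_{\R^n}\phi(x)\exp(i\langle\textstyle\sum_j z_j, x\rangle)\,dx = \mathcal{F}(\phi)[\textstyle\sum_j z_j]$, giving the claimed identity.

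I expect the main technical point to be the extension from smooth convex inputs to arbitrary smooth complex inputs (needed to evaluate on the exponentials): it rests on the uniqueness assertion of Theorem~\ref{theorem:GW} together with the observation that every smooth complex function is locally a $\C$-linear combination of smooth convex functions (e.g.\ after adding a sufficiently large multiple of $|x|^2$ on a compact set), so that two symmetric compactly supported distributions that agree on tensor products of smooth convex functions necessarily coincide.
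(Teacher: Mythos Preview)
Your argument is correct and follows the same core computation as the paper: polarize $\Psi_\tau[\phi]$ via Proposition~\ref{proposition:differentialCycleSum} to obtain the iterated Lie derivative expression $D(0)[\pi^*\phi\wedge\mathcal L_{V_{h_1}}\cdots\mathcal L_{V_{h_k}}\tau]$, identify $V_{h_j}$ with $-\tilde X_j$ for the exponential test functions, and apply~\eqref{eq:QtauFourier-Laplace}. The one substantive difference lies in how the convexity constraint on the $h_j$ is circumvented. The paper simply specializes to $z_j=ix_j$ with $x_j\in\R^n$, so that $h_j=\exp(-\langle x_j,\cdot\rangle)$ is genuinely convex and Proposition~\ref{proposition:differentialCycleSum} applies directly; the identity for general $z\in(\C^n)^k$ then follows by analytic continuation, since both sides are entire. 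You instead package the Lie-derivative formula as a symmetric, compactly supported distribution and invoke the uniqueness clause of Theorem~\ref{theorem:GW} to identify it with $\GW(\Psi_\tau[\phi])$, after which you may evaluate on arbitrary smooth complex inputs. Both routes are sound; the paper's is slightly shorter, while yours has the merit of exhibiting $\GW(\Psi_\tau[\phi])$ explicitly. Your closing remark about writing smooth functions as differences of convex ones is the right justification for the uniqueness step, though note that you need globally convex functions in $\Conv(\R^n,\R)\cap C^\infty$; this works because the compact support of the distribution lets you replace the test functions by compactly supported modifications, which have bounded Hessians and hence admit such a decomposition globally.
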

\begin{proof}
	It is sufficient to show this equation for $z_1=ix_1\dots,z_k=ix_k$ for $x_1,\dots,x_k\in \R^n$. Let $\phi\in C_c(\R^n)$. Then $\Psi_\tau[\phi]\in\VConv_k(\R^n)$ is a continuous valuation and we calculate
	\begin{align*}
		\mathcal{F}(\GW(\Psi_\tau[\phi]))[ix_1,\dots,ix_k]=&\GW(\Psi_\tau[\phi])[\exp(-\langle x_1,\cdot\rangle)\otimes\dots\otimes \exp(-\langle x_1,\cdot\rangle)]\\
		=&\overline{\Psi_\tau[\phi]}(\exp(-\langle x_1,\cdot\rangle),\dots, \exp(-\langle x_1,\cdot\rangle))\\
		=&\frac{1}{k!}\frac{\partial^k}{\partial \lambda_1\dots\partial\lambda_k}\Big|_0\Psi_\tau[\phi]\left(\sum_{i=0}^{k}\lambda_i\exp(-\langle x_i,\cdot\rangle)\right),
	\end{align*}
	where we have used the defining property of the Goodey-Weil distribution in Theorem~\ref{theorem:GW} and the definition of the polarization $\overline{\Psi_\tau[\phi]}$. Set $h_i:=\exp(-\langle x_i,\cdot\rangle)$. As $\Psi_\tau[\phi](f)=D(f)[\pi^*\phi\wedge \tau]$, we may apply Proposition~\ref{proposition:differentialCycleSum} to obtain
	\begin{align*}
		\frac{1}{k!}\frac{\partial^k}{\partial \lambda_1\dots\partial\lambda_k}\Big|_0\Psi_\tau[\phi]\left(\sum_{i=0}^{k}\lambda_i\exp(-\langle x_i,\cdot\rangle)\right)
		=&\frac{1}{k!}\frac{\partial^k}{\partial \lambda_1\dots\partial\lambda_k}\Big|_0D\left(0+\sum_{i=1}^{k}\lambda_ih_i\right)[\pi^*\phi\wedge\tau]\\
		=&\frac{1}{k!}\frac{\partial^k}{\partial \lambda_1\dots\partial\lambda_k}\Big|_0D\left(0\right)[\pi^*\phi \wedge G_{\lambda_1h_1}^*\dots G_{\lambda_1h_1}^*\tau]\\
		=&\frac{1}{k!}D\left(0\right)[\pi^*\phi \wedge\mathcal{L}_{X_{h_1}}\dots \mathcal{L}_{X_{h_k}}\tau],
	\end{align*}
	where $X_{h_j}:=\frac{d}{dt}|_0G_{th_j}$. As in the proof of Proposition~\ref{prop:relationPQ}, $G_{th_j}^*\alpha=\alpha+tdh_j$, so we obtain
	\begin{align*}
		dh_j=\mathcal{L}_{X_{h_j}}\alpha=-i_{X_{h_j}}\omega_s.
	\end{align*}
	Note that
	\begin{align*}
		dh_j=d\exp(-\langle x_j,\cdot\rangle)=d\exp(i\langle ix_j,\cdot\rangle).
	\end{align*}
	The characterization of $Q_\tau$ in \eqref{eq:QtauFourier-Laplace} implies
	\begin{align*}
		\mathcal{L}_{X_{h_1}}\dots \mathcal{L}_{X_{h_k}}\tau=(-1)^k\exp\left(i\left\langle\sum_{j=1}^{k} ix_j,\cdot\right\rangle\right)Q_\tau(ix_1,\dots,ix_k)dx_1\wedge\dots\wedge dx_n.
	\end{align*}
	Thus,
	\begin{align*}
		&\mathcal{F}(\GW(\Psi_\tau[\phi]))[ix_1,\dots,x_k]\\
		=&\frac{(-1)^k}{k!}Q_\tau(ix_1,\dots,ix_k)D(0)\left[\pi^*\phi\cdot \pi^*\exp\left(i\left\langle\sum_{j=1}^{k} ix_j,\cdot\right\rangle\right)\wedge dx_1\wedge \dots \wedge dx_n\right]\\
		=&\frac{(-1)^k}{k!}Q_\tau(ix_1,\dots,ix_k)\mathcal{F}(\phi)\left[\sum_{j=1}^{k}ix_k\right],
	\end{align*}
	as $D(0)$ is given by integration over $\{(x,0)\in T^*\R^n: x\in \R^n\}$.
\end{proof}	
\begin{proposition}
	\label{proposition:C2FormulaCurvatureMeasurePolynomial}
	For $\tau\in P\Lambda^{n-k,k}$ the map $\Psi_\tau(f)[B]=D(f)[1_{\pi^{-1}(B)}\tau] $ is given for $f\in\Conv(\R^n,\R)\cap C^2(\R^n)$ by
	\begin{align*}
		\Psi_\tau(f)[B]=\int_{B}P_\tau(D^2f(x))d\vol_n(x)\quad\text{for all bounded Borel sets } B\subset\R^n.
	\end{align*}
\end{proposition}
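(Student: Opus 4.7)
The plan is to reduce everything to a pointwise computation of the pullback of $\tau$ via the graph of $df$. The main input is that for $f\in\Conv(\R^n,\R)\cap C^2(\R^n)$ the differential cycle $D(f)$ coincides with integration over the oriented graph $\mathrm{graph}(df)$, parametrized by the map $\Phi_f:\R^n\to T^*\R^n$, $\Phi_f(x)=(x,df(x))$. This is folklore, but may be verified directly against the four characterizing properties of Theorem~\ref{theorem:characterization_differential_cycle}: closedness is immediate since $\Phi_f(\R^n)$ has no boundary; the Lagrangian property follows from $\Phi_f^*\omega_s=\Phi_f^*(-\sum_i dx_i\wedge dy_i)=-\sum_{i,j}\partial_i\partial_jf\,dx_i\wedge dx_j=0$ by symmetry of the Hessian; local vertical boundedness follows from continuity of $df$; and the normalization in (4) holds by construction. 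Uniqueness then gives $D(f)[\omega]=\int_{\R^n}\Phi_f^*\omega$ for every admissible form $\omega$.

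Granted this, and using $\pi\circ\Phi_f=\mathrm{id}_{\R^n}$, I would write
\begin{align*}
    \Psi_\tau(f)[B]=D(f)[1_{\pi^{-1}(B)}\tau]=\int_{\R^n}1_B(x)\,\Phi_f^*\tau.
\end{align*}
The task is therefore to identify $\Phi_f^*\tau$ pointwise. Since $\tau\in\Lambda^{n-k,k}$ has constant coefficients, its pullback at $x\in\R^n$ depends only on the linear map $d\Phi_f|_x\colon v\mapsto(v,D^2f(x)v)$. On the other hand, the map $F_Q(z)=(z,Qz)$ used to define $P_\tau$ is linear, so its differential at any point is $v\mapsto(v,Qv)$. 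Choosing $Q=D^2f(x)$ matches these two linear maps, and therefore
\begin{align*}
    \Phi_f^*\tau|_x=F_{D^2f(x)}^*\tau|_0=P_\tau(D^2f(x))\,dx_1\wedge\dots\wedge dx_n,
\end{align*}
by the defining property of $P_\tau$ applied to the real symmetric matrix $D^2f(x)\in SM_n\subset SM_n\otimes_\R\C$.

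Substituting this into the integral yields
\begin{align*}
    \Psi_\tau(f)[B]=\int_{\R^n}1_B(x)P_\tau(D^2f(x))\,dx_1\wedge\dots\wedge dx_n=\int_B P_\tau(D^2f(x))\,d\vol_n(x),
\end{align*}
which is the desired formula. The only step that requires care is the first, namely confirming that $D(f)$ is integration over $\mathrm{graph}(df)$ when $f$ is $C^2$; once this is in place the rest is a tautology via the definition of $P_\tau$. No primitivity of $\tau$ is actually used here, which is consistent with the fact that a multiple of $\omega_s$ contributes $0$ to $\Phi_f^*$ (by the Lagrangian property) and is sent to $0$ by $P$ (since $F_Q^*\omega_s=0$).
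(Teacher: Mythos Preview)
Your proof is correct and follows essentially the same approach as the paper: both identify $D(f)$ for $f\in C^2$ with integration over the graph of $df$, then compute the pullback $\Phi_f^*\tau$ pointwise by observing that $d\Phi_f|_x$ coincides with the differential of $F_{D^2f(x)}$, so that the defining relation $F_Q^*\tau=P_\tau(Q)\,dx_1\wedge\dots\wedge dx_n$ applies with $Q=D^2f(x)$. Your write-up is in fact slightly more explicit than the paper's, since you spell out the verification of the four axioms in Theorem~\ref{theorem:characterization_differential_cycle} for the graph current, and your closing remark on why primitivity plays no role is a nice addition.
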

\begin{proof}
	Consider the map
	\begin{align*}
		\tilde{G}_f:\R^n\rightarrow&\R^n\times (\R^n)^*\\
		x\mapsto& (x,df(x)).
	\end{align*}
	For $x_0\in \R^n$ define $Q_{x_0}\in \Sym^2(\R^n)$ by $Q_{x_0}(x):=\frac{1}{2}\langle D^2f(x_0)x,x\rangle$. Note that
	\begin{align*}
		\tilde{G}_f^*dy_i|_{x_0}=d\left(\frac{\partial f}{\partial x_i}\right)\Big|_{x_0}=\sum_{j=1}^{n}\frac{\partial^2f}{\partial x_i\partial x_j}(x_0)dx_j=G_{Q_{x_0}}^*dy_i.
	\end{align*}
	Thus
	\begin{align*}
		\tilde{G}_f^*\tau|_{x_0}=G_{Q_{x_0}}^*\tau|_{x_0}=P_\tau(Q_{x_0})dx_1\wedge\dots\wedge dx_n=P_\tau(D^2f(x_0))dx_1\wedge\dots\wedge dx_n,
	\end{align*}
	which implies
	\begin{align*}
		\Psi_\tau(f)[B]=D(f)[1_{\pi^{-1}(B)}\tau]=\int_{B}\tilde{G}_f^*\tau=\int_{B}P_\tau (D^2(f(x))d\vol_n(x).
	\end{align*}
\end{proof}

\subsection{Classification result}
	\begin{theorem}
		\label{theorem:FormulaFourier-LaplaceCurvatureMeasures}
		For every $\Psi\in\MAVal_k(\R^n)$ there exists a unique homogeneous, symmetric polynomial $Q[\Psi]$ on $(\C^n)^k$ of degree $(2,\dots,2)$ such that
		\begin{align*}
			\mathcal{F}(\GW(\Psi[\phi]))[z_1,\dots,z_k]=\frac{(-1)^k}{k!}Q[\Psi](z_1,\dots,z_k)\mathcal{F}(\phi)\left[\sum_{j=1}^{k}z_j\right].
		\end{align*}
		In particular, $Q[\Psi]=0$ if and only if $\Psi=0$.
	\end{theorem}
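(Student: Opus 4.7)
The plan is to construct $Q[\Psi]$ by a Fourier-Laplace argument that factors out a character coming from translation equivariance, and then to use the restriction formula of Proposition~\ref{proposition:KlainEmbedding} to pin down polynomiality. For $\phi\in C_c(\R^n)$, set $F_\phi(z_1,\dots,z_k):=\mathcal{F}(\GW(\Psi[\phi]))[z_1,\dots,z_k]$, an entire function on $(\C^n)^k$ by Theorem~\ref{theorem:GW}. Translation equivariance of $\Psi$, combined with the characterizing property of the Goodey-Weil distribution, shows that $\GW(\Psi[\phi(\cdot-x_0)])$ is the translate of $\GW(\Psi[\phi])$ along the diagonal of $(\R^n)^k$, so $F_{\phi(\cdot-x_0)}(z)=e^{i\langle\sum_j z_j,x_0\rangle}F_\phi(z)$. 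For fixed $z$, the distribution $L_z\colon\phi\mapsto F_\phi(z)$ therefore satisfies $\nabla L_z=i(\sum_j z_j)L_z$, forcing $L_z$ to be a scalar multiple of the exponential $y\mapsto e^{i\langle\sum z_j,y\rangle}$. Hence there exists a function $c:(\C^n)^k\to\C$ with
\[
	F_\phi(z)=c(z)\,\mathcal{F}(\phi)\Bigl[\sum\nolimits_j z_j\Bigr]\quad\text{for all }\phi\in C_c(\R^n),
\]
and for each prescribed $z_0\in(\C^n)^k$ one obtains a $\phi$ with $\mathcal{F}(\phi)[\sum z_j]\neq 0$ near $z_0$ by modulating a fixed mollifier by a plane wave $e^{-i\langle u,\cdot\rangle}$; thus $c$ is entire.

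To identify $c$, fix $E\in\Gr_k(\R^n)$ and apply Proposition~\ref{proposition:KlainEmbedding}: $\Psi[\phi](\pi_E^*f)=\int_E\phi_E\,d\MA_E(f)$ with $\phi_E(x)=\KL_\Psi(E)\int_{E^\perp}\phi(x+y)\,d\vol_{E^\perp}(y)$. A direct Fubini computation, using that $\langle w,y\rangle=0$ for $w\in E_\C$ and $y\in E^\perp$, gives $\mathcal{F}_E(\phi_E)[w]=\KL_\Psi(E)\,\mathcal{F}(\phi)[w]$ for all $w\in E_\C$. Feeding this into Lemma~\ref{lemma:interpretation_fourier_maxdegree} applied to $\Psi[\phi]\in\VConv_k(\R^n)$ yields
\[
	c(z_1,\dots,z_k)=\frac{(-1)^k}{k!}\KL_\Psi(E)\,\det\bigl(\langle z_i,z_j\rangle\bigr)_{i,j=1}^k\quad\text{for all }(z_1,\dots,z_k)\in E_\C^k,
\]
so $c|_{E_\C^k}$ is a polynomial that is multi-homogeneous of degree $(2,\dots,2)$.

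To globalize, expand $c$ in its Taylor series at the origin and decompose $c=\sum_{a\in\mathbb{N}^k}c^{(a)}$ into multi-homogeneous polynomial components $c^{(a)}$ of multi-degree $a=(a_1,\dots,a_k)$. The uniqueness of the multi-homogeneous decomposition on each $E_\C^k$, together with the formula above, forces $c^{(a)}|_{E_\C^k}=0$ for every $E\in\Gr_k(\R^n)$ and every $a\neq(2,\dots,2)$. Since any $(x_1,\dots,x_k)\in(\R^n)^k$ is contained in $E_\C^k$ once $E\supset\mathrm{span}(x_1,\dots,x_k)$, the union $\bigcup_E E_\C^k$ contains $(\R^n)^k$ and is therefore Zariski-dense in $(\C^n)^k$; each polynomial $c^{(a)}$ with $a\neq(2,\dots,2)$ must then vanish identically. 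Thus $c=c^{(2,\dots,2)}$ and $Q[\Psi]:=(-1)^k k!\,c$ is the desired multi-homogeneous polynomial of degree $(2,\dots,2)$. Symmetry follows from that of $\GW(\Psi[\phi])$; uniqueness is immediate once one observes $\mathcal{F}(\phi)[\sum z_j]$ can be arranged to be nonzero at any given $z$; and $Q[\Psi]=0$ forces $\GW(\Psi[\phi])=0$ for every $\phi$, hence $\Psi[\phi]=0$ by Theorem~\ref{theorem:GW}, and finally $\Psi=0$ as each $\Psi(f)$ is a Radon measure.

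The main obstacle is the distributional eigenvalue step producing the factorisation $F_\phi=c\cdot\mathcal{F}(\phi)[\sum z_j]$: it requires the continuity of $\phi\mapsto F_\phi(z)$ as a distribution on $\R^n$ and some care with complex $\sum z_j$. The Zariski-density argument cutting the multi-degree down to $(2,\dots,2)$ is conceptually decisive but technically brief.
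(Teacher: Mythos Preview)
Your approach is correct in outline and genuinely different from the paper's at two points. First, you obtain the factorisation $F_\phi(z)=c(z)\,\mathcal{F}(\phi)[\sum z_j]$ from translation equivariance via a distributional eigenvalue argument, whereas the paper simply \emph{defines} $Q[\Psi]$ as the quotient $(-1)^kk!\,F_\phi/\mathcal{F}(\phi)[\sum z_j]$ near the origin for one $\phi$ with $\mathcal{F}(\phi)[0]\neq0$ and checks independence of $\phi$ from the Klain restriction formula on $(i\R^n)^k$. Second, for polynomiality you decompose the entire function $c$ into its multi-homogeneous Taylor components and kill all but the $(2,\dots,2)$ piece by Zariski density of $(\R^n)^k\subset(\C^n)^k$; the paper instead reads off the scaling identity $Q[\Psi](t_1z_1,\dots,t_kz_k)=\bigl(\prod_j t_j^2\bigr)Q[\Psi](z)$ from the Klain formula and then bounds $|Q[\Psi]|\le C\prod|z_j|^2$, concluding by Liouville. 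Both routes pass through the same restriction identity on $E_\C^k$ via Proposition~\ref{proposition:KlainEmbedding} and Lemma~\ref{lemma:interpretation_fourier_maxdegree}. Your translation-equivariance step is a pleasant structural explanation for why the character $\mathcal{F}(\phi)[\sum z_j]$ must appear at all, something the paper does not make explicit.

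The gap you flag is real: for arbitrary $z\in(\C^n)^k$ you give no reason why $L_z:\phi\mapsto F_\phi(z)$ is continuous, since the support of $\GW(\Psi[\phi])$ varies with $\supp\phi$ and no uniform seminorm estimate is supplied; the ODE conclusion $L_z=c(z)e^{i\langle\sum z_j,\cdot\rangle}$ is unjustified without this. The clean fix is to run the equivariance argument only on the slice $z=iy\in(i\R^n)^k$: there the exponentials $e^{-\langle y_j,\cdot\rangle}$ are honest convex functions, so $L_{iy}(\phi)=\int\phi\,d\bar\Psi\bigl(e^{-\langle y_1,\cdot\rangle},\dots,e^{-\langle y_k,\cdot\rangle}\bigr)$ is integration against a Radon measure, and the translation relation forces $L_{iy}=c(iy)\,e^{-\langle\sum y_j,\cdot\rangle}\vol_n$ directly. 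To globalise $c$, observe that for any $\phi,\psi\in C_c(\R^n)$ the entire function $F_\phi\cdot\mathcal{F}(\psi)[\sum z_j]-F_\psi\cdot\mathcal{F}(\phi)[\sum z_j]$ vanishes on the maximal totally real subspace $(i\R^n)^k$ and hence identically; consequently the local quotients $F_\phi/\mathcal{F}(\phi)[\sum z_j]$ patch to a single entire function $c$, with no distributional ODE needed for complex $z$. With this repair your argument goes through.
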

	\begin{proof}
		As $\mathcal{F}\circ\GW$ is injective, it is clear that the representation (and thus the polynomial $Q[\Psi]$) is unique if it exists.\\
		
		If $y_1,\dots,y_k\in E$ for $E\in\Gr_k(\R^n)$, then by Proposition~\ref{proposition:KlainEmbedding}
		\begin{align*}
			\Psi[\phi]\left(\sum_{j=1}^{k}\lambda_j\exp\langle y_j,x\rangle \right)=&\KL_\Psi(E)\int_{\R^n}\phi(x) d\left(\MA_E\left(\sum_{j=1}^{k}\lambda_j\exp\langle y_j,x\rangle \right)\otimes \vol_{E\perp}\right),
		\end{align*}
		so
		\begin{align*}
			&\GW(\Psi[\phi])[\exp(\langle y_1,\cdot\rangle)\otimes\dots\otimes\exp(\langle y_k,\cdot\rangle)]\\
			=&\KL_\Psi(E)\frac{1}{k!}\frac{\partial^k}{\partial\lambda_1\dots\partial\lambda_k}\Big|_0\int_{\R^n}\phi(x) d\left(\MA_E\left(\sum_{j=1}^{k}\lambda_j\exp\langle y_j,x\rangle \right)\otimes \vol_{E\perp}\right)\\
			=&\KL_\Psi(E)\frac{1}{k!}\det(\langle y_i,y_j\rangle)_{i,j=1}^k\mathcal{F}(\phi)\left[-i\sum_{j=1}^{k}y_j\right].
		\end{align*}
		Choose $\phi\in C_c(\R^n)$ with $\mathcal{F}(\phi)[0]\ne0$. Then $Q[\Psi]$ given by
		\begin{align*}
			Q[\Psi](z_1,\dots,z_k):=(-1)^kk!\frac{\mathcal{F}(\GW(\Psi[\phi]))[z_1,\dots,z_k]}{\mathcal{F}(\phi)\left[\sum_{j=1}^{k}z_k\right]}
		\end{align*}
		defines a holomorphic function in a neighborhood of $0\in(\C^n)^k$		
		 that satisfies for $y_1,\dots,y_k\in E\in\Gr_k(\R^n)$ with $|y_1|,\dots, |y_k|$ small enough
		\begin{align}
			\label{equation:PPsiOnRealSubspace}
			Q[\Psi](y_1,\dots,y_k)=\KL_\Psi(E)\det(\langle y_i,y_j\rangle)_{i,j=1}^k.
		\end{align}
		As a holomorphic function is uniquely determined by its restriction to the real subspace $i(\R^n)^k\subset(\C^n)^k$, we see that the germ of $Q[\Psi]$ does not depend on the choice of $\phi$ with $\mathcal{F}(\phi)[0]\ne 0$. By rescaling the argument of $\phi$ and using the equivariance of the Fourier-Laplace transform, we thus see that $Q[\Psi]$ defines an entire function on $(\C^n)^k$ such that \eqref{equation:PPsiOnRealSubspace} holds. In particular,
		\begin{align}
			\label{eq:DecompFourier-LaplacePolynomial}
			\mathcal{F}(\GW(\Psi[\phi]))[z_1,\dots,z_k]=\frac{(-1)^k}{k!}Q[\Psi](z_1,\dots,z_k)\mathcal{F}(\phi)\left[\sum_{j=1}^{k}z_j\right]
		\end{align}
		for all $\phi\in C_c(\R^n)$ with $\mathcal{F}(\phi)[0]\ne 0$. If $\phi\in C_c(\R^n)$  satisfies $\mathcal{F}(\phi)[0]=0$, we can replace $\phi$ by $\phi+\epsilon \phi_0$, where $\mathcal{F}(\phi_0)[0]\ne 0$, and let $\epsilon$ go to zero. Thus \eqref{eq:DecompFourier-LaplacePolynomial} holds for all $\phi\in C_c(\R^n)$. It remains to see that $Q[\Psi]$ is a homogeneous polynomial of degree $2k$.\\
		
		Note that for $y_1,\dots,y_k\in E\in\Gr_k(\R^n)$ and $t_1,\dots,t_k\in\R$
		\begin{align*}
			Q[\Psi](t_1iy_1,\dots,t_kiy_k)=\KL(E)\det(\langle t_iy_i,t_jy_j\rangle)_{i,j=1}^k=Q[\Psi](iy_1,\dots,iy_k)\prod_{j=1}^{k}t_j^2,
		\end{align*}
		so $Q[\Psi](t_1z_1,\dots,t_kz_k)=Q[\Psi](z_1,\dots,z_k)\prod_{j=1}^{k}t_j^2$ for all $z_1,\dots,z_k\in\C^n$ by the identity theorem. In particular,
		\begin{align*}
			|Q[\Psi](z_1,\dots,z_k)|\le \prod_{j=1}^{k}|z_j|^2 \sup_{|w_1|\le 1,\dots,|w_k|\le 1} |Q[\Psi](w_1,\dots,w_k)|.
		\end{align*}
		Thus $Q[\Psi]$ is an entire function that is bounded by a polynomial of degree $2k$ and thus a polynomial of degree at most $2k$ itself. From \eqref{equation:PPsiOnRealSubspace} we deduce that $Q[\Psi]$ is in fact a polynomial of degree $(2,\dots,2)$ and that $Q[\Psi]$ is symmetric.\\
		
		For the last claim, observe that $Q[\Psi]=0$ implies $\mathcal{F}(\GW(\Psi[\phi]))=0$ for all $\phi\in C_c(\R^n)$. As $\mathcal{F}\circ \GW$ is injective, $\Psi[\phi]=0$ for all $\phi\in C_c(\R^n)$, which implies $\Psi=0$.
	\end{proof}
	\begin{corollary}
		\label{corollary:PPsi_kMinors}
		For $\Psi\in\MAVal_k(\R^n)$ the polynomial $Q[\Psi]$ belongs to $\mathcal{M}^2_k$.
	\end{corollary}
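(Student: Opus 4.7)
The plan is to mimic closely the argument used in the proof of Proposition~\ref{prop:PolynomialsFormPrimitiveDiff}, replacing the polynomial $\Phi^*p$ there by $Q[\Psi]$ here. The only new input needed is to verify that $Q[\Psi]$ vanishes on linearly dependent tuples $(z_1,\dots,z_k) \in (\C^n)^k$; everything else then follows from the same ideal-theoretic and symmetry/homogeneity bookkeeping.

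First I would extract the vanishing property. By Theorem~\ref{theorem:FormulaFourier-LaplaceCurvatureMeasures}, for any $\phi\in C_c(\R^n)$ with $\mathcal{F}(\phi)[0]\ne 0$,
\begin{align*}
    Q[\Psi](z_1,\dots,z_k) = (-1)^k k!\, \frac{\mathcal{F}(\GW(\Psi[\phi]))[z_1,\dots,z_k]}{\mathcal{F}(\phi)\!\left[\sum_{j=1}^{k} z_j\right]}
\end{align*}
on the open set where the denominator is non-zero. Since $\Psi[\phi]\in\VConv_k(\R^n)$, Lemma~\ref{lemma:ZeroSetFourier-Laplace} implies the numerator vanishes whenever $z_1,\dots,z_k$ are linearly dependent. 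Hence $Q[\Psi]$ vanishes on linearly dependent tuples $(z_1,\dots,z_k)$ lying in the open set $\{\mathcal{F}(\phi)[\sum z_j]\ne 0\}$, and by the identity theorem for polynomials, on all linearly dependent tuples.

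Next I would invoke the ideal-theoretic fact recalled in the proof of Proposition~\ref{prop:PolynomialsFormPrimitiveDiff}: the set of $(z_1,\dots,z_k)\in(\C^n)^k$ that are linearly dependent is exactly the common vanishing locus of the $(k\times k)$-minors $\Delta_\alpha$ of the matrix whose columns are $z_1,\dots,z_k$, and the ideal of polynomials vanishing on this variety is the prime ideal generated by the $\Delta_\alpha$ (compare~\cite[Section~16]{MillerSturmfelsCombinatorialcommutativealgebra2005}). Therefore
\begin{align*}
    Q[\Psi] = \sum_\alpha h_\alpha \cdot \Delta_\alpha
\end{align*}
for some polynomials $h_\alpha \in \mathcal{P}((\C^n)^k)$.

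Finally, the exact same symmetry-and-homogeneity normalization as in Proposition~\ref{prop:PolynomialsFormPrimitiveDiff} applies: $Q[\Psi]$ is symmetric and homogeneous of degree $(2,\dots,2)$ by Theorem~\ref{theorem:FormulaFourier-LaplaceCurvatureMeasures}, and each $\Delta_\alpha$ is multilinear and skew-symmetric in $z_1,\dots,z_k$. Thus one may assume $h_\alpha$ is $1$-homogeneous in each argument (hence multilinear) and skew-symmetric, so $h_\alpha$ is itself a linear combination of the $(k\times k)$-minors. This exhibits $Q[\Psi]$ as an element of $\mathcal{M}_k^2$. No step is really a serious obstacle; the only delicate point is ensuring that $Q[\Psi]$ really vanishes on \emph{all} linearly dependent tuples despite the factor $\mathcal{F}(\phi)[\sum z_j]$ in the denominator, which is handled cleanly by the polynomial identity theorem once vanishing is established on an open set.
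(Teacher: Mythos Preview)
Your proposal is correct and follows essentially the same route as the paper's own proof: show that $Q[\Psi]$ vanishes on linearly dependent tuples via Lemma~\ref{lemma:ZeroSetFourier-Laplace} and the quotient formula from Theorem~\ref{theorem:FormulaFourier-LaplaceCurvatureMeasures}, then run the ideal-plus-symmetry argument from Proposition~\ref{prop:PolynomialsFormPrimitiveDiff}. The only cosmetic difference is that the paper allows $\phi$ to vary with the point (so that $\mathcal{F}(\phi)[\sum z_j]\ne 0$ at each given linearly dependent tuple), which makes the vanishing immediate, whereas you fix a single $\phi$ and close up by continuity/density---both are fine, though your phrase ``identity theorem for polynomials'' would be more accurately stated as continuity of $Q[\Psi]$ together with density of the open set in the determinantal variety.
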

	\begin{proof}
		Let $\Delta_\alpha$ denote a basis of the space spanned by $(k\times k)$-minors.
		We will first show that $Q[\Psi]$ is contained in the ideal generated by the $(k\times k)$-minors on $(\C^n)^k$. This ideal coincides with the ideal of all polynomials that vanish on all $(z_1,\dots,z_k)\in(\C^n)^k$ that are linearly dependent. As 
		\begin{align*}
			Q[\Psi](z_1,\dots,z_k)=(-1)^kk!\frac{\mathcal{F}(\GW(\Psi[\phi])[z_1,\dots,z_k]}{\mathcal{F}(\phi)\left[\sum_{i=1}^{k}z_i\right]}
		\end{align*} 
		for all $\phi\in C_c(\R^n)$ with $\mathcal{F}(\phi)\left[\sum_{i=1}^{k}z_i\right]\ne 0$ by Theorem~\ref{theorem:FormulaFourier-LaplaceCurvatureMeasures}, the claim follows because $\GW(\Psi[\phi])[z_1,\dots,z_k]$ vanishes on linearly dependent vectors by Lemma~\ref{lemma:ZeroSetFourier-Laplace}.\\
		
		We therefore find $h_\alpha\in\C[z_1,\dots,z_n]$ such that
		\begin{align*}
			Q[\Psi](z_1,\dots,z_n)=\sum_{\alpha}h_\alpha\cdot\Delta_\alpha.
		\end{align*}
		As $Q[\Psi]$ is a homogeneous polynomial of degree $(2,\dots,2)$ by Theorem~\ref{theorem_embedding_Cont}, we may assume that $h_\alpha$ is homogeneous of degree $(1,\dots,1)$. As $\Delta_\alpha$ is skew-symmetric with respect to permutations of the arguments and $Q[\Psi]$ is symmetric by Theorem~\ref{theorem:FormulaFourier-LaplaceCurvatureMeasures}, we can also assume that $h_\alpha$ is skew-symmetric. But then $h_\alpha$ is an alternating multilinear functional on $(\C^n)^k$ and thus a linear combination of $(k\times k)$-minors, which shows the claim.
	\end{proof} 
	\begin{theorem}
		\label{theorem:RepresentationDifferentialForms}
		For every $\Psi\in \MAVal_k(\R^n)$ there exists a unique primitive differential form $\tau\in P\Lambda^{n-k,k}$ such that $\Psi=\Psi_\tau$.
	\end{theorem}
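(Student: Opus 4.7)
The plan is to assemble the preceding structural results into a single extraction of $\tau$ from $\Psi$ via the polynomial $Q[\Psi]$. The key insight is that both sides of the desired identity $\Psi = \Psi_\tau$ are governed by the same invariant: the symmetric polynomial on $(\C^n)^k$ appearing in the Fourier-Laplace transform of the associated Goodey-Weil distribution.

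First I would invoke Corollary~\ref{corollary:PPsi_kMinors} to deduce that $Q[\Psi] \in \mathcal{M}^2_k$. Next, Proposition~\ref{prop:PolynomialsFormPrimitiveDiff} tells us that the map $\tau \mapsto Q_\tau$ sends $P\Lambda^{n-k,k}$ surjectively onto $\mathcal{M}^2_k$, so there exists $\tau \in P\Lambda^{n-k,k}$ with $Q_\tau = Q[\Psi]$. By Corollary~\ref{corollary:equivarianceMeasuresFromDifferentialForms}, the associated valuation $\Psi_\tau$ belongs to $\MAVal_k(\R^n)$, and by Proposition~\ref{prop:FormulaQFourier-LaplaceDiffForm} combined with the characterization in Theorem~\ref{theorem:FormulaFourier-LaplaceCurvatureMeasures}, the polynomial associated with $\Psi_\tau$ satisfies $Q[\Psi_\tau] = Q_\tau = Q[\Psi]$.

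The final step is to apply the injectivity part of Theorem~\ref{theorem:FormulaFourier-LaplaceCurvatureMeasures}, which says that $\Psi \in \MAVal_k(\R^n)$ is determined by $Q[\Psi]$, to conclude $\Psi = \Psi_\tau$. Uniqueness of $\tau$ is then immediate: if $\tau_1, \tau_2 \in P\Lambda^{n-k,k}$ both produce $\Psi$, then $Q_{\tau_1} = Q[\Psi] = Q_{\tau_2}$, and the injectivity of $\tau \mapsto Q_\tau$ on primitive forms (the corollary following Proposition~\ref{prop:relationPQ}) forces $\tau_1 = \tau_2$.

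Since all nontrivial work has been done in the preceding subsections (identifying the image and kernel of $\tau \mapsto Q_\tau$, establishing the Fourier-Laplace formula for $\Psi_\tau$, and locating $Q[\Psi]$ in $\mathcal{M}^2_k$), there is essentially no remaining obstacle; the proof is a short diagram-chase between $\MAVal_k(\R^n)$, $\mathcal{M}^2_k$, and $P\Lambda^{n-k,k}$, with both vertical maps $\Psi \mapsto Q[\Psi]$ and $\tau \mapsto Q_\tau$ shown to be injective and to have the same image.
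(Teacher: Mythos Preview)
Your proposal is correct and follows essentially the same route as the paper: locate $Q[\Psi]$ in $\mathcal{M}^2_k$ via Corollary~\ref{corollary:PPsi_kMinors}, lift it to some $\tau\in P\Lambda^{n-k,k}$ via Proposition~\ref{prop:PolynomialsFormPrimitiveDiff}, and then use Proposition~\ref{prop:FormulaQFourier-LaplaceDiffForm} together with the injectivity statement in Theorem~\ref{theorem:FormulaFourier-LaplaceCurvatureMeasures} to conclude $\Psi=\Psi_\tau$. Your explicit treatment of uniqueness via the injectivity of $\tau\mapsto Q_\tau$ is a small addition but entirely in line with the paper's argument.
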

	\begin{proof}
		By Theorem~\ref{theorem:FormulaFourier-LaplaceCurvatureMeasures}, $\Psi\in\MAVal_k(\R^n)$ is uniquely determined by the polynomial $Q[\Psi]$. This polynomial is contained in the space spanned by squares of $(k\times k)$-minors due to Corollary~\ref{corollary:PPsi_kMinors}. Proposition~\ref{prop:PolynomialsFormPrimitiveDiff} thus implies that there exists a differential form $\tau\in P\Lambda^{n-k,k}$ with $Q_\tau=Q[\Psi]$. If we consider $\Psi_\tau\in\MAVal_k(\R^n)$, then Proposition~\ref{prop:FormulaQFourier-LaplaceDiffForm} and the definition of $Q[\Psi]$ in Theorem~\ref{theorem:FormulaFourier-LaplaceCurvatureMeasures} show that 
		\begin{align*}
			\mathcal{F}(\GW(\Psi[\phi]))=\mathcal{F}(\GW(\Psi_\tau[\phi])
		\end{align*}
		for all $\phi\in C_c(\R^n)$. Thus $\Psi=\Psi_\tau$.
	\end{proof}
		Let us note that the previous argument boils down to the following statement.
	\begin{corollary}
		\label{corollary:CharacterizationInTermsOfCharacteristicPolynomial}
		The map 
		\begin{align*}
			\MAVal_k(\R^n)&\rightarrow \mathcal{M}^2_k\\
			\Psi&\mapsto Q[\Psi] 
		\end{align*}
		is bijective.
	\end{corollary}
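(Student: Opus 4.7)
The plan is to observe that the corollary is a short synthesis of the three preceding results, so I would simply assemble the pieces. First, the map $\Psi \mapsto Q[\Psi]$ takes values in $\mathcal{M}^2_k$ by Corollary~\ref{corollary:PPsi_kMinors}, so well-definedness is already in hand.

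For injectivity, I would note that the assignment $\Psi \mapsto Q[\Psi]$ is linear: it is defined via the Fourier-Laplace transform of the Goodey-Weil distribution of $\Psi[\phi]$, and each of the operations $\Psi \mapsto \Psi[\phi]$, $\mu \mapsto \GW(\mu)$, and $T \mapsto \mathcal{F}(T)$ is linear. Since Theorem~\ref{theorem:FormulaFourier-LaplaceCurvatureMeasures} states that $Q[\Psi]=0$ forces $\Psi=0$, linearity promotes this to the statement that $Q[\Psi_1]=Q[\Psi_2]$ implies $\Psi_1=\Psi_2$.

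For surjectivity, given $Q \in \mathcal{M}^2_k$, I would apply Proposition~\ref{prop:PolynomialsFormPrimitiveDiff}, which identifies the image of $\tau \mapsto Q_\tau$ on $P\Lambda^{n-k,k}$ with $\mathcal{M}^2_k$, to produce a primitive form $\tau \in P\Lambda^{n-k,k}$ with $Q_\tau = Q$. By Theorem~\ref{theorem:measuresDifferentialCycle} (and the subsequent discussion) the valuation $\Psi_\tau$ lies in $\MAVal_k(\R^n)$, and Proposition~\ref{prop:FormulaQFourier-LaplaceDiffForm} tells us its associated Fourier-Laplace expression is controlled by $Q_\tau$. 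Comparing this with the defining formula for $Q[\Psi_\tau]$ in Theorem~\ref{theorem:FormulaFourier-LaplaceCurvatureMeasures} (and using the uniqueness clause therein) gives $Q[\Psi_\tau] = Q_\tau = Q$, establishing surjectivity.

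There is no real obstacle here; the content of the corollary is entirely bookkeeping that packages Theorem~\ref{theorem:FormulaFourier-LaplaceCurvatureMeasures}, Corollary~\ref{corollary:PPsi_kMinors}, Proposition~\ref{prop:PolynomialsFormPrimitiveDiff}, and Proposition~\ref{prop:FormulaQFourier-LaplaceDiffForm} into a single bijectivity statement. The only point worth stating cleanly is the identity $Q[\Psi_\tau] = Q_\tau$, which in the proof above is exactly the comparison between the two Fourier-Laplace formulas and implicitly was the heart of Theorem~\ref{theorem:RepresentationDifferentialForms}.
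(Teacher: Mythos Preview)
Your proposal is correct and follows essentially the same route as the paper: well-definedness from Corollary~\ref{corollary:PPsi_kMinors}, injectivity from Theorem~\ref{theorem:FormulaFourier-LaplaceCurvatureMeasures}, and surjectivity from Proposition~\ref{prop:PolynomialsFormPrimitiveDiff} together with Proposition~\ref{prop:FormulaQFourier-LaplaceDiffForm}. Your write-up simply unpacks the bookkeeping (linearity, the identity $Q[\Psi_\tau]=Q_\tau$) that the paper leaves implicit.
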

	\begin{proof}
		The map well defined by Corollary~\ref{corollary:PPsi_kMinors}, onto by Proposition~\ref{prop:PolynomialsFormPrimitiveDiff} and Proposition~\ref{prop:FormulaQFourier-LaplaceDiffForm}, and injective by Theorem~\ref{theorem:FormulaFourier-LaplaceCurvatureMeasures}.
	\end{proof}
	\begin{proof}[Proof of Theorem~\ref{maintheorem:IrreducibilityMAVal}]
		Note that the map
		\begin{align*}
			P\Lambda^{n-k,k}&\rightarrow\MAVal_k(\R^n)\\
			\tau&\mapsto \Psi_\tau
		\end{align*}
		is $\GL(n,\R)$-equivariant (up to sign) due to Corollary~\ref{corollary:equivarianceMeasuresFromDifferentialForms} and bijective by Theorem~\ref{theorem:RepresentationDifferentialForms}. As $P\Lambda^{n-k,k}$ is a finite dimensional representation of $\GL(n,\R)$, it is thus sufficient to show that $P\Lambda^{n-k,k}$ is an irreducible representation of $\GL(n,\R)$. Recall that we extended elements of $P\Lambda^{n-k,k}$ to constant holomorphic forms of $\C^n\times(\C^n)^*$ by $\C$-linearity and that $P\Lambda^{n-k,k}$ is an irreducible rational representation of $\GL(n,\C)$ under this identification by Corollary~\ref{corollary:primitiveFormsIrreducible}. It is well known that this implies that $P\Lambda^{n-k,k}$ is an irreducible representation of $\GL(n,\R)$. Indeed, let $W\subset P\Lambda^{n-k,k}$ be a non-trivial $\GL(n,\R)$-invariant subspace. As $P\Lambda^{n-k,k}$ is a rational representation of $\GL(n,\C)$, the set
		\begin{align*}
			\{g\in\GL(n,\C): g\cdot\tau\in W~\text{for all}~\tau\in W\}\subset \GL(n,\C)
		\end{align*}
		is Zariski closed. By assumption, it contains the Zariski dense subset $\GL(n,\R)$ and thus coincides with $\GL(n,\C)$. In other words, $W$ is a non-trivial $\GL(n,\C)$-invariant subspace of $P\Lambda^{n-k,k}$ and therefore these spaces coincide. Thus $P\Lambda^{n-k,k}$ is an irreducible representation of $\GL(n,\R)$. 
	\end{proof}
	We are now able to prove Theorem~\ref{maintherorem:RepresentationDifferentialForms}.
	\begin{proof}[Proof of Theorem~\ref{maintherorem:RepresentationDifferentialForms}]
		It is easy to see that linear combinations of mixed Monge-Amp\`ere operators of quadratic type of degree $k$ span a non-trivial $\GL(n,\R)$-invariant subspace of $\MAVal_k(\R^n)$, so this space coincides with $\MAVal_k(\R^n)$ by Theorem~\ref{maintheorem:IrreducibilityMAVal}. Similarly, the space spanned by $\Psi_\tau$ for $\tau\in \Lambda^{n-k,k}$ is $\GL(n,\R)$-invariant by Corollary~\ref{corollary:equivarianceMeasuresFromDifferentialForms} and thus coincides with $\MAVal_k(\R^n)$. Finally, for any linear combination $P$ of $(k\times k)$-minors, there exists a differential form $\tau\in P\Lambda^{n-k,k}$ with $P=P_\tau$ by Proposition~\ref{proposition:ImageP}. By Proposition~\ref{proposition:C2FormulaCurvatureMeasurePolynomial}, $\Psi_\tau$ satisfies
		\begin{align*}
			\Psi_\tau(f)[B]=\int_{B}P(D^2(x))d\vol_n(x)\quad\text{for bounded Borel sets}~B
		\end{align*}
		 for $f\in\Conv(\R^n,\R)\cap C^2(\R^n)$. On the other hand, such a representation holds for every differential form $\tau\in P\Lambda^{n-k,k}$ by Proposition~\ref{proposition:C2FormulaCurvatureMeasurePolynomial}, so the claim follows from the previous discussion.
	\end{proof}

\section{The cone of non-negative valuations}
\label{section:positivity}	

\begin{lemma}
	\label{lemma:charPositiveHomogeneous}
	The following are equivalent for $\Psi_\tau\in\MAVal_k(\R^n)$, $\tau\in P\Lambda^{n-k,k}$:
	\begin{enumerate}
		\item $\Psi_\tau$ is non-negative.
		\item $Q[\Psi_\tau]=Q_\tau\in\P((\C^n)^k)$ is non-negative on $(\R^n)^k$.
		\item $\KL_{\Psi_\tau}\ge 0$ on $\Gr_k(\R^n)$.
		\item $P_\tau\in\P(SM_n)$ is non-negative on the subspace of real positive semi-definite matrices.
	\end{enumerate}
\end{lemma}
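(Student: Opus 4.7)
The plan is to establish the chain of equivalences $(1) \Longleftrightarrow (4)$, $(4) \Longleftrightarrow (2)$, and $(2) \Longleftrightarrow (3)$, each exploiting one of the explicit formulas already available: Proposition~\ref{proposition:C2FormulaCurvatureMeasurePolynomial} links $\Psi_\tau$ with $P_\tau$ on $C^2$-functions, Proposition~\ref{prop:relationPQ} links $P_\tau$ with $Q_\tau$ through the map $\Phi$, and equation~\eqref{equation:PPsiOnRealSubspace} from Theorem~\ref{theorem:FormulaFourier-LaplaceCurvatureMeasures} links $Q_\tau$ with $\KL_{\Psi_\tau}$.

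For $(1) \Longleftrightarrow (4)$, the forward direction is handled by testing $\Psi_\tau$ on the convex quadratic $f(x) = \frac{1}{2} x^\top Q x$ associated to a real positive semi-definite $Q \in SM_n$: Proposition~\ref{proposition:C2FormulaCurvatureMeasurePolynomial} gives $\Psi_\tau(f)[B] = P_\tau(Q) \vol_n(B)$, which forces $P_\tau(Q) \ge 0$. For the converse, any $f \in \Conv(\R^n,\R) \cap C^2(\R^n)$ has positive semi-definite Hessian, so Proposition~\ref{proposition:C2FormulaCurvatureMeasurePolynomial} yields $\Psi_\tau(f) \ge 0$; the statement for general $f \in \Conv(\R^n,\R)$ follows by standard mollification together with the weak$^*$-continuity of $\Psi_\tau$ and the weak$^*$-closedness of the cone of non-negative Radon measures. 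The equivalence $(2) \Longleftrightarrow (3)$ can be read off equation~\eqref{equation:PPsiOnRealSubspace}: picking an orthonormal basis $y_1,\ldots,y_k$ of a subspace $E \in \Gr_k(\R^n)$ yields $\KL_{\Psi_\tau}(E) = Q_\tau(y_1,\ldots,y_k)$; conversely, for arbitrary real $y_1,\ldots,y_k$, either they span a $k$-plane $E$ and~\eqref{equation:PPsiOnRealSubspace} applies (the Gram determinant being non-negative), or they are linearly dependent, in which case $Q_\tau(y_1,\ldots,y_k) = 0$ since $Q_\tau = Q[\Psi_\tau] \in \mathcal{M}^2_k$ by Corollary~\ref{corollary:PPsi_kMinors}.

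The nontrivial step is $(2) \Longleftrightarrow (4)$, specifically the implication $(2) \Longrightarrow (4)$. The reverse direction $(4) \Longrightarrow (2)$ is immediate from Proposition~\ref{prop:relationPQ}: for real $w_1,\ldots,w_k$ the matrix $\Phi(w_1,\ldots,w_k) = \sum_i w_i w_i^\top$ is real positive semi-definite, so (4) forces $Q_\tau(w_1,\ldots,w_k) = k!\, P_\tau(\Phi(w_1,\ldots,w_k)) \ge 0$. The hard part is that $\Phi$ hits only real PSD matrices of rank at most $k$, whereas $P_\tau$ must be shown non-negative on all real PSD matrices. I would bridge this gap by spectral decomposition combined with polarization: write $A = \sum_{i=1}^n \lambda_i v_i v_i^\top$ with $\lambda_i \ge 0$ and $(v_i)$ orthonormal, then expand
\begin{align*}
P_\tau(A) = \sum_{i_1,\ldots,i_k} \lambda_{i_1} \cdots \lambda_{i_k}\, \bar{P}_\tau(v_{i_1} v_{i_1}^\top, \ldots, v_{i_k} v_{i_k}^\top)
\end{align*}
via the symmetric multilinear polarization $\bar{P}_\tau$ of $P_\tau$. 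Applying Proposition~\ref{prop:relationPQ} to $\sum_{j=1}^k t_j v_{i_j} v_{i_j}^\top = \Phi(\sqrt{t_1} v_{i_1}, \ldots, \sqrt{t_k} v_{i_k})$ for $t_j \ge 0$ and using the bidegree-$(2,\ldots,2)$ homogeneity of $Q_\tau$ yields the identity $\bar{P}_\tau(v_{i_1} v_{i_1}^\top, \ldots, v_{i_k} v_{i_k}^\top) = \frac{1}{(k!)^2} Q_\tau(v_{i_1}, \ldots, v_{i_k})$. Multi-indices with repetitions produce linearly dependent $k$-tuples on which $Q_\tau$ vanishes, while the remaining distinct-index terms give orthonormal tuples for which $Q_\tau \ge 0$ by $(2)$; combined with $\lambda_{i_j} \ge 0$, this proves $P_\tau(A) \ge 0$ and closes the cycle.
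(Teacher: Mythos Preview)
Your proof is correct and for the most part tracks the paper's argument: the equivalence $(2)\Leftrightarrow(3)$ via equation~\eqref{equation:PPsiOnRealSubspace}, the implication $(4)\Rightarrow(1)$ via Proposition~\ref{proposition:C2FormulaCurvatureMeasurePolynomial} plus approximation, and the implication $(2)\Rightarrow(4)$ via spectral decomposition and polarization (your identity $\bar P_\tau(v_{i_1}v_{i_1}^\top,\ldots,v_{i_k}v_{i_k}^\top)=\frac{1}{(k!)^2}Q_\tau(v_{i_1},\ldots,v_{i_k})$ is exactly what the paper derives) all match the paper essentially verbatim.

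The one genuine difference is how the cycle is closed at the top. The paper proves $(1)\Rightarrow(2)$ by invoking Proposition~\ref{prop:FormulaQFourier-LaplaceDiffForm}: it evaluates $\mathcal{F}(\GW(\Psi_\tau[\phi]))$ at purely imaginary arguments and recognizes this, via the polarization, as $\Psi_\tau[\phi]$ applied to a sum of convex exponentials $\sum_j\exp(-\langle w_j,\cdot\rangle)$, then chooses $\phi\ge 0$. You instead prove $(1)\Rightarrow(4)$ directly by feeding the convex quadratic $\frac{1}{2}x^\top Qx$ into Proposition~\ref{proposition:C2FormulaCurvatureMeasurePolynomial} and reading off $P_\tau(Q)\ge 0$. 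Your route is the more economical of the two, since it avoids the Goodey--Weil/Fourier--Laplace machinery for this step altogether; the paper's route has the minor expository advantage of displaying $Q_\tau$ explicitly as a value of the valuation on concrete convex functions.
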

\begin{proof}
	1. $\Rightarrow$ 2.: For $w_1,\dots,w_k\in\R^n$, Proposition~\ref{prop:FormulaQFourier-LaplaceDiffForm} implies
	\begin{align*}
		\mathcal{F}(\GW(\Psi[\phi]))[iw_1,\dots,iw_k]=\frac{1}{k!}Q[\Psi] (w_1,\dots,w_k)\int_{\R^n}\phi(x)\exp\left(-\left\langle\sum_{j=1}^{k}w_k,x\right\rangle\right)dx.
	\end{align*}
	On the other hand,
	\begin{align*}
		\Psi[\phi]\left(\sum_{k=j}^{k}\exp(-\langle w_i,\cdot\rangle)\right)
		=&k!\overline{\Psi[\phi]}(\exp(-\langle w_1,\cdot\rangle),\dots,\exp(-\langle w_k,\cdot\rangle))\\=&k!\GW(\Psi[\phi])[\exp(-\langle w_1,\cdot\rangle)\otimes\dots\otimes\exp(-\langle w_k,\cdot\rangle)]\\
		=&k!\mathcal{F}(\GW(\Psi[\phi]))[iw_1,\dots,iw_k]
	\end{align*}
	as the polarization vanishes on other combinations of the functions $\exp(-\langle w_i,\cdot\rangle)$, $1\le i\le k$. If $\Psi$ is a non-negative valuation, we can choose a non-negative function $\phi\in C_c(\R^n)$ to see that $Q[\Psi]$ is non-negative on $(\R^n)^k$.\\
	
	2. $\Leftrightarrow$ 3.: This follows directly from \eqref{equation:PPsiOnRealSubspace}.\\
	
	2. $\Rightarrow$ 4.: This follows by considering the polarization $\bar{P}_\tau$ of $P_\tau$: If $S\in SM_n$ is a real positive semi-definite matrix, then there exists an orthonormal basis $e_1,\dots,e_n$ of $\R^n$ and $\lambda_1,\dots,\lambda_n\ge 0$ such that $S=\sum_{i=1}^{n}\lambda_i e_i\cdot e_i^T$. The multilinearity of the polarization and the fact that $P_\tau$ vanishes on matrices of rank less than $k$ implies
	\begin{align*}
		P_\tau(S)=&\sum_{|\alpha|=k}\lambda^\alpha\binom{k}{\alpha}\bar{P}_\tau(e_{\alpha_1}\cdot e_{\alpha_1}^T,\dots,e_{\alpha_k}\cdot e_{\alpha_k}^T)\\
		=&\sum_{|\alpha|=k}\frac{1}{k!}\lambda^\alpha\binom{k}{\alpha}P_{\tau}\left(\sum_{i=1}^{k}e_{\alpha_i}\cdot e_{\alpha_i}^T\right)\\
		=&\sum_{|\alpha|=k}\frac{1}{k!}\lambda^\alpha\binom{k}{\alpha}\frac{1}{k!}Q_\tau(e_{\alpha_1},\dots,e_{\alpha_k})\ge 0
	\end{align*}
	Here we have used the relation between $P_\tau$ and $Q_\tau$ from Proposition~\ref{prop:relationPQ} and that $Q_\tau=Q[\Psi]$ is non-negative.\\
	
	4. $\Rightarrow$ 1.:
	Proposition~\ref{proposition:C2FormulaCurvatureMeasurePolynomial} implies for $f\in \Conv(\R^n,\R)\cap C^2(\R^n )$
	\begin{align*}
		\Psi_\tau(f)[B]=\int_{B}P_\tau(D^2f(x))d\vol(x)\quad \text{for all bounded Borel sets}~B\subset\R^n.
	\end{align*}
	If $P_\tau$ is non-negative on the space of positive semi-definite matrices, then $\Psi_\tau(f)$ is a non-negative measure for all $f\in\Conv(\R^n,\R)\cap C^2(\R^n)$ and thus for all elements of $\Conv(\R^n,\R)$ by continuity. 
\end{proof}

\begin{corollary}
	Every $\Psi\in\MAVal_k(\R^n)$ can be written as a linear combination of non-negative valuations.
\end{corollary}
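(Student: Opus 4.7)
The plan is to transport the statement across the bijection
\[
    \MAVal_k(\R^n) \longrightarrow \mathcal{M}^2_k, \qquad \Psi \mapsto Q[\Psi],
\]
established in Corollary~\ref{corollary:CharacterizationInTermsOfCharacteristicPolynomial}, and then to exploit the polarization identity for products of polynomials. The key point provided by Lemma~\ref{lemma:charPositiveHomogeneous} (equivalence of (1) and (2)) is that, under this bijection, the cone of non-negative valuations corresponds exactly to the cone of those $Q \in \mathcal{M}^2_k$ that take non-negative values on $(\R^n)^k$. It therefore suffices to show that every element of $\mathcal{M}^2_k$ is a $\C$-linear combination of non-negative elements of $\mathcal{M}^2_k$.

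By the very definition of $\mathcal{M}^2_k$, any element can be written as a $\C$-linear combination of products $\Delta_\alpha \Delta_\beta$ of $(k \times k)$-minors of the matrix $(z_1, \dots, z_k) \in (\C^n)^k$. Since the minors have integer coefficients, the standard polarization identity
\[
    \Delta_\alpha \Delta_\beta = \tfrac{1}{4}\bigl((\Delta_\alpha + \Delta_\beta)^2 - (\Delta_\alpha - \Delta_\beta)^2\bigr)
\]
expresses each such product as a real linear combination of squares of real polynomials; each such square is manifestly non-negative on $(\R^n)^k$. Moreover, expanding $(\Delta_\alpha \pm \Delta_\beta)^2 = \Delta_\alpha^2 \pm 2 \Delta_\alpha \Delta_\beta + \Delta_\beta^2$ shows that both summands lie in $\mathcal{M}^2_k$.

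Combining these two observations, every element of $\mathcal{M}^2_k$ is a $\C$-linear combination of non-negative elements of $\mathcal{M}^2_k$. Pulling this decomposition back to $\MAVal_k(\R^n)$ via the inverse of $Q[\cdot]$ and applying the equivalence (1) $\Leftrightarrow$ (2) of Lemma~\ref{lemma:charPositiveHomogeneous} to each summand yields the desired representation of $\Psi$ as a linear combination of non-negative valuations. I do not anticipate any substantive obstacle: the corollary is essentially a formal consequence of the already-established bijection together with a one-line polarization, and no further analytic or geometric input is required.
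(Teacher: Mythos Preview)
Your proposal is correct and follows essentially the same approach as the paper: both transport the question to $\mathcal{M}^2_k$ via Corollary~\ref{corollary:CharacterizationInTermsOfCharacteristicPolynomial}, use the polarization identity to write each product $\Delta_\alpha\Delta_\beta$ as a real linear combination of squares $(\Delta_\alpha\pm\Delta_\beta)^2\in\mathcal{M}^2_k$, and then invoke Lemma~\ref{lemma:charPositiveHomogeneous} to conclude. Your version is in fact slightly cleaner, as you make explicit that the squares lie in $\mathcal{M}^2_k$ and use the correct coefficient $\tfrac{1}{4}$ (the paper's $\tfrac{1}{2}$ is a harmless typo).
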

\begin{proof}
	By Corollary~\ref{corollary:PPsi_kMinors}, $Q[\Psi]\in\mathcal{M}^2_k$, that is, $Q[\Psi]$ is a linear combination of quadratic products of $(k\times k)$-minors. If $\Delta_\alpha,\Delta_\beta$ are two of these minors, then 
	\begin{align*}
		\Delta_\alpha\cdot \Delta_\beta=\frac{1}{2}(\Delta_\alpha+\Delta_\beta)^2-\frac{1}{2}(\Delta_\alpha-\Delta_\beta)^2
	\end{align*}
	is a linear combination of squares of sums of $(k\times k)$-minors. Thus $Q[\Psi]$ can be written as a linear combination of squares of sums of $(k\times k)$-minors. Consider one of the elements $\tilde{\Psi}\in \MAVal_k(\R^n)$ corresponding to one of these squares of sums of $(k\times k)$-minors, which exist by Corollary~\ref{corollary:CharacterizationInTermsOfCharacteristicPolynomial}. Then $Q[\tilde{\Psi}]$ is a square of a sum of $(k\times k)$-minors and thus non-negative on $(\R^n)^k$. In particular, $\tilde{\Psi}$ is a non-negative valuation by Lemma~\ref{lemma:charPositiveHomogeneous}. Consequently, $\Psi$ can be written as a linear combination of non-negative valuations.
\end{proof}
\begin{proposition}
	$\Psi\in\MAVal(\R^n)$ is non-negative if and only if its homogeneous components are non-negative.
\end{proposition}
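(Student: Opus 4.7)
My plan is to handle the two implications separately. The ``if'' direction is immediate: if each $\Psi_k$ is non-negative, then $\Psi(f) = \sum_{k=0}^{n}\Psi_k(f)$ is a sum of non-negative Radon measures, hence itself non-negative.

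For the converse, suppose $\Psi$ is non-negative. The real Vandermonde expression $\Psi_k = \sum_{j=1}^{n+1} c_{kj}\,\Psi(j\,\cdot)$ with $c_{kj}\in\R$ (used in the proof of the homogeneous decomposition in Section~\ref{section:homogeneousDecomposition}) forces every $\Psi_k$ to take values in real signed measures. For $k=0$ the identity $\Psi_0 = \Psi(0)\ge 0$ holds because $\Psi_j(0)=0$ for $j\ge 1$ by $j$-homogeneity. For $k\ge 1$ my strategy is to verify the Klain-function criterion ``$\KL_{\Psi_k}(E)\ge 0$ for every $E\in\Gr_k(\R^n)$'' furnished by Lemma~\ref{lemma:charPositiveHomogeneous} (equivalence (1)$\Leftrightarrow$(3)), with $\KL_{\Psi_k}$ as in Proposition~\ref{proposition:KlainEmbedding}.

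Fix $E\in\Gr_k(\R^n)$ and set $h(y) = \tfrac{1}{2}|y|^2$ for $y\in E$, so that $\MA_E(h)=\vol_E$. For $t\ge 0$ the homogeneous decomposition gives
\begin{align*}
\Psi(\pi_E^*(th)) \;=\; \sum_{j=0}^{n} t^j\,\Psi_j(\pi_E^*h).
\end{align*}
By Theorem~\ref{theorem:homogeneousDecompVConv} applied on $E$, every $j$-homogeneous valuation on $\Conv(E,\R)$ with $j>\dim E = k$ vanishes, so the terms with $j>k$ drop out; Proposition~\ref{proposition:KlainEmbedding} moreover gives $\Psi_k(\pi_E^*h) = \KL_{\Psi_k}(E)\,\vol_n$. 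Pairing with a non-negative $\phi\in C_c(\R^n)$ satisfying $\int\phi\,d\vol_n>0$ then yields the polynomial
\begin{align*}
p_\phi(t) \;=\; \sum_{j=0}^{k} t^j \int_{\R^n}\phi\,d\Psi_j(\pi_E^*h)
\end{align*}
of degree at most $k$, with leading coefficient $\KL_{\Psi_k}(E)\int\phi\,d\vol_n$. Since $p_\phi(t)\ge 0$ for all $t\ge 0$, this leading coefficient must be non-negative, whence $\KL_{\Psi_k}(E)\ge 0$. As $E$ was arbitrary, Lemma~\ref{lemma:charPositiveHomogeneous} concludes $\Psi_k\ge 0$.

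I do not foresee a real obstacle: the crucial observation is that pulling back from a $k$-dimensional subspace truncates the homogeneous expansion at degree $k$, so that the leading $t^k$-coefficient of the resulting non-negative polynomial in $t$ isolates precisely the Klain datum of $\Psi_k$. The only routine verifications are that a real $\Psi$ produces real $\Psi_k$'s (immediate from the real Vandermonde) and that a polynomial non-negative on $[0,\infty)$ has non-negative leading coefficient.
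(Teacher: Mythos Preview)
Your proof is correct and follows essentially the same approach as the paper: both handle $k=0$ via $\Psi_0=\Psi(0)$, and for $k\ge 1$ both invoke the equivalence (1)$\Leftrightarrow$(3) of Lemma~\ref{lemma:charPositiveHomogeneous}, truncate the homogeneous expansion at degree $k$ by restricting to a $k$-dimensional subspace, and then extract the top coefficient via the scaling $t\to\infty$. The only cosmetic differences are that the paper works with an arbitrary $f\in\Conv(E,\R)$ and Borel set $B$ (rather than your specific $h=\tfrac12|\cdot|^2$ paired against $\phi$), and that you make the realness of the $\Psi_k$ explicit via the Vandermonde, whereas the paper leaves this implicit in the limit argument.
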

\begin{proof}
	Decompose $\Psi=\sum_{k=0}^{n}\Psi_k$ into its homogeneous components. If the homogeneous components $\Psi_k$ are non-negative, then $\Psi$ is obviously non-negative. For the converse, note first that $\Psi_0$ is constant and
	\begin{align*}
		\Psi_0(0)=\Psi(0)
	\end{align*}
	is a non-negative measure. For $1\le k\le n$, it is sufficient to show that $\KL_{\Psi_k}$ is non-negative due to Lemma~\ref{lemma:charPositiveHomogeneous}. Proposition~\ref{proposition:KlainEmbedding} shows that $\KL_{\Psi_k}$ is non-negative if and only if  $\Psi_k$ is non-negative on all pullbacks of functions defined on $k$-dimensional subspaces $E\in\Gr_k(\R^n)$. For $f\in\Conv(E,\R)$,
	\begin{align*}
		\Psi(\pi_E^*f)=\sum_{i=0}^{k}\Psi_i(\pi_E^*f),
	\end{align*}
	because $\Psi_l$, $k+1\le l\le n$, vanishes on pullbacks of functions defined on subspaces of dimension less than $l$. As $\Psi(t\pi_E^*f)$ is a non-negative measure for all $t>0$ by assumption, we obtain for all bounded Borel sets $B\subset\R^n$
	\begin{align*}
		0\le \frac{1}{t^k}\Psi(t\pi_E^*f)[B]=\Psi_k(\pi_E^*f)[B]+\frac{1}{t^k}\sum_{i=0}^{k-1}t^i\Psi_i(\pi_E^*f)[B]\quad\text{for}~t>0,
	\end{align*}
	which implies $\Psi_k(\pi_E^*f)[\phi]\ge 0$ by taking the limit  $t\rightarrow\infty$. Thus $\Psi_k(\pi_E^*f)$ is a non-negative measure.
\end{proof}

	\bibliography{literature_MA.bib}

\Addresses
\end{document}